\numberwithin{equation}{section}
\theoremstyle{plain}
\newtheorem{theorem}{Theorem}[section]
\newtheorem{lemma}[theorem]{Lemma}
\newtheorem{proposition}[theorem]{Proposition}
\newtheorem{corollary}[theorem]{Corollary}
\theoremstyle{definition}
\theoremstyle{remark}
\def\d{\mathrm{d}}
\def\Re{\mathrm{Re}}
\def\BV{\mathrm{BV}}
\def\KG{\mathbf{KG}}
\def\F{\mathbf{F}}
\def\H{\mathbf{H}}
\def\W{\textbf{W}}
\def\KGpp{\mathbf{KG}_{(++)}}
\def\Fpp{\mathbf{F}_{(++)}}
\def\K{\mathcal K}
\def\L{\mathrm{L}}
\def\R{\mathrm{R}}
\def\setF{\mathfrak F}
\def\setG{\mathfrak G}
\def\setR{\mathfrak R}
\def\setN{\mathfrak N}
\author{Jayson Cunanan}
\address[Jayson Cunanan] {Department of Mathematics, Graduate school of Science and Engineering, Saitama University, Saitama, 338-8570, Japan}
\email{jcunanan@mail.saitama-u.ac.jp}
\author{Shobu Shiraki}
\address[Shobu Shiraki] {Department of Mathematics, Graduate school of Science and Engineering, Saitama University, Saitama, 338-8570, Japan}
\email{s.shiraki.446@ms.saitama-u.ac.jp}
\begin{document}
\date{\today}
\title[
Some sharp null-form type estimates for the Klein--Gordon equation
]
{
Some sharp null-form type estimates for the Klein--Gordon equation
}

\keywords{Bilinear estimates,  Klein--Gordon equation,  optimal constants,  extremisers,  Strichartz estimate.}
\subjclass[2010]{35B45,  42B37,  35A23.}
\begin{abstract}
We establish a sharp bilinear estimate for the Klein--Gordon propagator in the spirit of recent work of Beltran--Vega. Our approach is inspired by work in the setting of the wave equation due to Bez, Jeavons and Ozawa. As a consequence of our main bilinear estimate, we deduce several sharp estimates of null-form type and recover some sharp Strichartz estimates found by Quilodr\'an and Jeavons.
\end{abstract}
\maketitle
\section{Introduction}
Let $d\ge2$ and $\phi_s(r)=\sqrt{s^2+r^2}$ for $r\ge0$ and $s>0$. We write $D$ for the operator $\sqrt{-\Delta_x}$, that is 
\[
\widehat{Df}(\xi)=|\xi|\widehat{f}(\xi)
\]
where $\widehat{\cdot}$ denotes the (spatial) Fourier transform defined by
\[
\widehat{f}(\xi)=\int_{\mathbb R^d}e^{-ix\cdot\xi}f(x)\,\d x
\]
for appropriate functions $f$ on $\mathbb R^d$. Additionally, we define $D_\pm$ by
\[
\widetilde{D_\pm f}(\tau,\xi)=||\tau|\pm|\xi||\widetilde{f}(\tau,\xi),
\]
where $\widetilde{\cdot}$ is the space-time Fourier transform of appropriate functions $f$ on $\mathbb R\times \mathbb R^d$. The d'Alembertian operator $\partial_t^2-\Delta_x$ will be denoted by $\square$, so that $\square=D_-D_+$. Our main object of interest is the Klein--Gordon propagator given by
\[
e^{it\phi_s(D)} f(x)=\frac{1}{(2\pi)^d}\int_{\mathbb R^d} e^{i(x\cdot\xi+t\phi_s(|\xi|))}\widehat{f}(\xi)\,\d\xi
\]
for sufficiently nice initial data $f$.

As part of the study of sharp bilinear estimates for the Fourier extension operator and inspired by work of Ozawa--Tsutsumi \cite{OT98}, Beltran--Vega \cite{BV19} very recently presented the following sharp estimate associated to the Klein--Gordon propagator
\begin{align}\label{ineq:BV}
&\|D^{\frac{2-d}{2}}(e^{it\phi_s(D)}f\overline{e^{it\phi_s(D)}g})\|_{L^2(\mathbb R^d\times\mathbb R)}^2\\
&\quad\le
(2\pi)^{1-3d}
\int_{(\mathbb R^d)^2}
|\widehat{f}(\eta_1)|^2|g(\eta_2)|^2\phi_s(|\eta_1|)\phi_s(|\eta_2|)
K^\BV(\eta_1,\eta_2)
\,\d\eta_1\d\eta_2,\nonumber
\end{align}
where
\[
K^\BV(\eta_1,\eta_2)
=
\int_{\mathbb S^{d-1}} \frac{
\phi_s(|\eta_1|)+\phi_s(|\eta_2|)
}{
(\phi_s(|\eta_1|)+\phi_s(|\eta_2|))^2-((\eta_1+\eta_2)\cdot\theta)^2
}
\,\d\sigma(\theta).
\]
The estimate \eqref{ineq:BV} has some interesting connections to well-known results. For example, as we shall see in more detail later, \eqref{ineq:BV} leads to null-form type estimates by appropriately estimating the kernel. In particular, when $d=2$ the Strichartz estimate 
\begin{equation}\label{ineq:Strichartz, d=2}
\|
e^{it\phi_1(D)}f\|_{L^4(\mathbb R^{2+1})}
\leq
2^{-\frac14}
\|f\|_{H^\frac12(\mathbb R^2)}
\end{equation}
with the optimal constant follows from \eqref{ineq:BV}, more generally, for $d\geq2$ so does the null-form estimate
\begin{align}\label{ineq:Strichartz non-wave}
\|
D^{\frac{2-d}{2}}
|e^{it\phi_s(D)}f|^2\|_{L^2(\mathbb{R}^{d+1})}^2
\le
\frac{\pi^{2-d}|\mathbb S^{d-2}|}{2^ds}
\|\phi_s(D)^\frac12f\|_{L^2(\mathbb R^d)}^4,
\end{align}
although the optimality of the constant may be no longer true when $d\geq3$.
Here, the inhomogeneous Sobolev norm of order $\alpha$ is defined by
\[
\|f\|_{H^\alpha(\mathbb R^d)}:=\|\phi_1(D)^\alpha f\|_{L^2(\mathbb R^d)}.
\]
The estimate \eqref{ineq:Strichartz, d=2} with the optimal constant was first obtained by Quilodr\'an \cite{Qu15}. Bilinear estimates which bear resemblance to \eqref{ineq:BV} for the Klein--Gordon equation, as well as the Schr\"odinger and wave equation, have often arisen in the pursuit of optimal constants for Strichartz estimates and closely related null-form type estimates. As well as the aforementioned work of Beltran--Vega \cite{BV19}, estimates of the form \eqref{ineq:BV} for the Klein--Gordon propagator can be found in work of Jeavons \cite{Jv14} (see also \cite{Jvthesis}). For the Schr\"odinger equation, in addition to the Ozawa--Tsutsumi estimates in \cite{OT98}, estimates resembling \eqref{ineq:BV} may be found in work of Carneiro \cite{Cr09} and Planchon--Vega \cite{PV09}, with a unification of each of these results by Bennett et al. in \cite{BBJP17}. For the wave equation, Bez--Rogers \cite{BR13} and Bez--Jeavons--Ozawa \cite{BJO16} have established estimates resembling \eqref{ineq:BV}. We also remark that the related literature on sharp Strichartz estimates is large. In addition to the papers already cited, this body of work includes, for example, \cite{BJ15, COS19, COSS18, Fs07, HZ06, Ku03}; the interested reader is referred to the survey article by Foschi--Oliveira e Silva \cite{FO17} for further information.

Let $\Gamma(z)$ be the gamma function of $z$ (with $\Re(z)>0$) and
\begin{equation}
\K_a^b(\eta_1,\eta_2)
:=
\frac{\left(\phi_s(|\eta_1|)\phi_s(|\eta_2|)-\eta_1\cdot\eta_2-s^2\right)^b}
{\left(\phi_s(|\eta_1|)\phi_s(|\eta_2|)-\eta_1\cdot\eta_2+s^2\right)^a}.\nonumber
\end{equation}
In the present paper, we establish the following new bilinear estimates for the Klein--Gordon propagator. 

\begin{theorem}\label{thm:main}
For $d\ge2$ and $\beta>\frac{1-d}{4}$, we have the estimate
\begin{align}\label{ineq:main}
&\||\square|^\beta (e^{it\phi_s(D)}f\overline{e^{it\phi_s(D)}g})\|_{L^2(\mathbb{R}^{d+1})}^2\\
&\quad\le
\KG(\beta,d)
\int_{\mathbb{R}^{2d}}
|\widehat{f}(\eta_1)|^2|\widehat{g}(\eta_2)|^2\phi_s(|\eta_1|)\phi_s(|\eta_2|)
\K_\frac12^{\frac{d-2}{2}+2\beta}(\eta_1,\eta_2)
\,\d \eta_1\d \eta_2,\nonumber
\end{align}
with the optimal constant 
\[
\KG(\beta,d)
:=
2^{\frac{-5d+1}{2}+2\beta}\pi^{\frac{-5d+1}{2}}\frac{\Gamma(\tfrac{d-1}{2}+2\beta)}{\Gamma(d-1+2\beta)}.
\]
\end{theorem}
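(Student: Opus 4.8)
The plan is to follow the standard route for sharp $L^2$-based bilinear estimates: pass to the space-time Fourier side, recognise the left-hand side as an $L^2$ norm of a convolution of two measures carried by the hyperboloid $H=\{(\phi_s(|\xi|),\xi):\xi\in\mathbb R^d\}$, apply Cauchy--Schwarz to extract the stated kernel, and then evaluate the resulting ``slice measure'' in closed form. Since $e^{it\phi_s(D)}f$ is, up to an explicit constant, the inverse space-time Fourier transform of the pushforward of $\widehat f$ onto $H$, the product $F:=e^{it\phi_s(D)}f\,\overline{e^{it\phi_s(D)}g}$ has space-time Fourier transform supported on $H-H$ and equal to a constant times
\[
\widetilde F(\tau,\xi)=c_d\int_{\mathbb R^d}\widehat f(\xi_1)\,\overline{\widehat g(\xi_1-\xi)}\,\delta\!\big(\tau-\phi_s(|\xi_1|)+\phi_s(|\xi_1-\xi|)\big)\,\frac{\d\xi_1}{\phi_s(|\xi_1|)\,\phi_s(|\xi_1-\xi|)}.
\]
By Plancherel, $\||\square|^\beta F\|_{L^2(\mathbb R^{d+1})}^2$ is a constant times $\int\!\!\int|\tau^2-|\xi|^2|^{2\beta}|\widetilde F(\tau,\xi)|^2\,\d\tau\,\d\xi$, and the key algebraic point is that on the support of $\widetilde F$, with $\eta_1=\xi_1$ and $\eta_2=\xi_1-\xi$, one has $\tau^2-|\xi|^2=2s^2-2\big(\phi_s(|\eta_1|)\phi_s(|\eta_2|)-\eta_1\cdot\eta_2\big)\le0$; thus the multiplier equals $2^{2\beta}\big(\phi_s(|\eta_1|)\phi_s(|\eta_2|)-\eta_1\cdot\eta_2-s^2\big)^{2\beta}$, which accounts for the ``$+2\beta$'' in the exponent of $\K_\frac12^{\frac{d-2}{2}+2\beta}$ and for the factor $2^{2\beta}$ in $\KG(\beta,d)$.

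Next I would, for each fixed $(\tau,\xi)$, apply Cauchy--Schwarz in the $\xi_1$-integral defining $\widetilde F(\tau,\xi)$ with the weight $\phi_s(|\xi_1|)\phi_s(|\xi_1-\xi|)$. The Cauchy--Schwarz constant is the total mass $\mathcal M(\tau,\xi):=\int_{\mathbb R^d}\delta\big(\tau-\phi_s(|\xi_1|)+\phi_s(|\xi_1-\xi|)\big)\,\d\xi_1$, which is a Lorentz invariant of the spacelike vector $(\tau,\xi)\in H-H$ and hence depends only on $q:=\phi_s(|\eta_1|)\phi_s(|\eta_2|)-\eta_1\cdot\eta_2=s^2+\tfrac12(|\xi|^2-\tau^2)$. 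Integrating the resulting pointwise bound in $(\tau,\xi)$ and changing variables back to $(\eta_1,\eta_2)$ via Fubini and the coarea formula (the Jacobian of $(\eta_1,\eta_2)\mapsto(\tau,\xi)$ is elementary) produces a bound of the shape
\[
\||\square|^\beta F\|_{L^2}^2\le 2^{2\beta}c_d'\int_{\mathbb R^{2d}}|\widehat f(\eta_1)|^2|\widehat g(\eta_2)|^2\,\phi_s(|\eta_1|)\phi_s(|\eta_2|)\,(q-s^2)^{2\beta}\,\mathcal M\,\d\eta_1\,\d\eta_2 ,
\]
so the whole problem is reduced to evaluating $\mathcal M$ and showing it equals a constant times $(q-s^2)^{(d-2)/2}(q+s^2)^{-1/2}$.

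That evaluation is the main obstacle. Using Lorentz invariance I would boost the spacelike vector $(\tau,\xi)$ to a purely spatial one, i.e.\ reduce to the frame $\tau=0$, $|\eta_1|=|\eta_2|$; then $\mathcal M$ (together with the $(q-s^2)^{2\beta}$ weight coming from the $|\square|^{2\beta}$ piece) becomes a concrete integral over $\mathbb S^{d-1}$, equivalently a one-dimensional Beta-type integral $\int_{-1}^1(1-u^2)^{\frac{d-3}{2}}(\cdots)\,\d u$. This integral converges exactly when $\frac{d-1}{2}+2\beta>0$, i.e.\ when $\beta>\frac{1-d}{4}$, which is precisely the hypothesis, and it evaluates to $\Gamma(\tfrac{d-1}{2}+2\beta)/\Gamma(d-1+2\beta)$ up to explicit powers of $2$ and $\pi$. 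Assembling the normalising constants from Plancherel together with this value yields exactly $\KG(\beta,d)$, establishing \eqref{ineq:main}; the delicate part here is bookkeeping all the constants so that the final answer is precisely $\KG(\beta,d)$ rather than merely comparable to it.

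Finally, for optimality of the constant, the only inequality used is Cauchy--Schwarz, which is an equality for a given $(\tau,\xi)$ precisely when $\phi_s(|\eta_1|)\widehat f(\eta_1)\,\overline{\phi_s(|\eta_2|)\widehat g(\eta_2)}$ is constant on the corresponding slice of $H-H$. Solving this functional equation forces $\widehat f,\widehat g$ to be exponential in $(\xi,\phi_s(|\xi|))$ with opposite exponents, which is incompatible with membership in $L^2(\phi_s(|\cdot|)\,\d\xi)$; consequently there is no genuine extremiser and sharpness must be obtained in the limit. I would test \eqref{ineq:main} on the family $\widehat f_\varepsilon=\widehat g_\varepsilon=e^{-\varepsilon\phi_s(|\cdot|)}$ (with a mild truncation if needed), compute both sides using the same invariant $q$, and show their ratio tends to $\KG(\beta,d)$ as $\varepsilon\to0^+$; equivalently, that the Cauchy--Schwarz step is asymptotically saturated along this family. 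This is routine once Step~3 is in hand, so the essential difficulty remains the explicit computation of the slice measure.
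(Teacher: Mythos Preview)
Your overall strategy is the one that works in the $(++)$ case (and indeed the paper uses exactly this Cauchy--Schwarz argument in Section~\ref{sec:analogous results ++}), but in the $(+-)$ case it fails at the Cauchy--Schwarz step. The difficulty is that for $u\overline{v}$ the space-time Fourier transform is supported on $H-H$, so $(\tau,\xi)$ is \emph{spacelike}. If you boost such $(\tau,\xi)$ to the frame $\tau=0$, the slice $\{\xi_1:\phi_s(|\xi_1|)=\phi_s(|\xi_1-\xi|)\}$ is the affine hyperplane $\{\xi_1\cdot\xi=|\xi|^2/2\}$, which is unbounded, and a direct computation gives
\[
\mathcal M(0,\xi)=\frac{1}{|\xi|}\int_{\mathbb R^{d-1}}\frac{\d\eta'}{\sqrt{s^2+\tfrac{|\xi|^2}{4}+|\eta'|^2}}=\infty
\]
for every $d\ge2$ (with or without the Lorentz-invariant weights $1/\phi_s\phi_s$). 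So the ``total mass'' you propose to use as the Cauchy--Schwarz constant is infinite, and the bound becomes vacuous. Your claim that the slice integral ``becomes a concrete integral over $\mathbb S^{d-1}$'' is correct for $H+H$ (timelike $(\tau,\xi)$) but not for $H-H$. Likewise, the convergence condition $\tfrac{d-1}{2}+2\beta>0$ that you record cannot arise from $\mathcal M$ alone, since the $|\square|^{2\beta}$ multiplier you have already factored out is constant on each slice and therefore cannot regularise this integral.

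The paper avoids this obstruction by a change of variables $(\eta_2,\eta_4)\mapsto(-\eta_4,-\eta_2)$ in the expanded square before applying any inequality. This converts the $(+-)$ constraint $\delta\binom{\tau-\phi_s(|\eta_1|)+\phi_s(|\eta_2|)}{\xi-\eta_1-\eta_2}$ into a $(++)$-type constraint $\delta\binom{\phi_s(|\eta_1|)+\phi_s(|\eta_2|)-\phi_s(|\eta_3|)-\phi_s(|\eta_4|)}{\eta_1+\eta_2-\eta_3-\eta_4}$, for which the relevant slice is compact. The price is that the weight coming from $|\square|^{2\beta}$, which in your formulation is a function of $(\tau,\xi)$ alone, now becomes $|\phi_s(|\eta_1|)\phi_s(|\eta_4|)-\eta_1\cdot\eta_4-s^2|^{2\beta}$, mixing the outer variable $\eta_1$ with the integration variable $\eta_4$. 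One then applies the arithmetic--geometric mean (rather than Cauchy--Schwarz) and is led to the integral $J^{2\beta}(\eta_1,\eta_2)$ of Lemma~\ref{lem:reformation of J}; the mixed weight is handled by a further Lorentz boost together with the identity of Lemma~\ref{lem:spherical rearrangement}, which shows that after boosting the weight is $|\eta|^{4\beta}(1+\tfrac{\eta}{|\eta|}\cdot\omega_*)^{2\beta}$ for some $\omega_*\in\mathbb S^{d-1}$. It is precisely this angular factor that produces the Beta integral and the condition $\beta>\tfrac{1-d}{4}$. If you repair your argument along these lines the rest of your outline (including the sharpness via $\widehat f_a=e^{-a\phi_s}/\phi_s$) goes through.
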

In the case when $s\to0$, certain sharp bilinear estimates for solutions to the wave equation with the operator $|\square|^\beta$ have been deeply studied by Bez--Jeavons--Ozawa \cite{BJO16}.  One may note that, when $d=2$, a slightly larger range of $\beta$ is valid in Theorem \ref{thm:main} than one for the corresponding result \eqref{ineq:biest sharp BJO} for the wave case in \cite{BJO16}.
In order to prove Theorem \ref{thm:main}, we employ their argument and adapt it into the context of the Klein--Gordon equation. As a consequence of Theorem \ref{thm:main}, we will generate null-form type estimates of the form
\begin{equation}\label{ineq:corollaries}
\||\square|^\beta |e^{it\phi_s(D)}f|^2\|_{L^2(\mathbb{R}^{d+1})}
\le
C
\|\phi_s(D)^\alpha f\|_{L^2(\mathbb R^d)}^2
\end{equation}
for certain pairs $(\alpha,\beta)$ with the optimal constant.

%
%
%
%
%
%

\subsection{Wave regime}
For $d\geq4$, the kernel $K^\BV$ can be estimated\footnote{
We observe that $d\geq4$ is important here. For $d=3$, the estimate \eqref{ineq:biest D} actually does not hold. The counterexample has been given by Foschi \cite{Fs00} for the wave equation, and the same argument appropriately adapted works for the Klein--Gordon propagator.
}
as
\begin{align*}
K^{\BV}(\eta_1,\eta_2)
&\leq
\frac{|\mathbb S^{d-1}|}{\phi_s(|\eta_1|)+\phi_s(|\eta_2|)}
\int_{-1}^1
\left(
1-\left|\frac{\eta_1+\eta_2}{\phi_s(|\eta_1|)+\phi_s(|\eta_2|)}\right|^2\lambda^2
\right)^{-1}
(1-\lambda^2)^{\frac{d-3}{2}}
\,\d\lambda\\
&\leq
\frac {C}{\phi_s(|\eta_1|)+\phi_s(|\eta_2|)}
\end{align*}
for some absolute constant $C$ since $|\eta_1+\eta_2|\leq\phi_s(|\eta_1|)+\phi_s(|\eta_2|)$. The integral in the first inequality is surely finite as long as $d\geq4$. Then, it follows from the arithmetic-geometric mean that
\[
\phi_s(|\eta_1|)\phi_s(|\eta_2|)K^{\BV}(\eta_1,\eta_2)
\leq
C\phi_s(|\eta_1|)^\frac12\phi_s(|\eta_2|)^\frac12,
\]
and hence the null-form type estimate
\begin{equation}\label{ineq:biest D}
\|D^{\frac{2-d}{2}}(e^{it\phi_s(D)}f\overline{e^{it\phi_s(D)}g})\|_{L^2(\mathbb R^d\times\mathbb R)}
\le
C
\|\phi_s(D)^\frac14f\|_{L^2(\mathbb R^d)}
\|\phi_s(D)^\frac14g\|_{L^2(\mathbb R^d)}
\end{equation}
holds. When $s\to0$, the estimate \eqref{ineq:biest D} yields
\begin{equation}\label{ineq:null +-}
\|D^{\beta_0}D_-^{\beta_-}D_+^{\beta_+}(e^{itD}f\overline{e^{itD}g})\|_{L^2(\mathbb R^{d+1})}
\leq
C\|f\|_{\dot{H}^{\alpha_1}}\|g\|_{\dot{H}^{\alpha_2}},
\end{equation}
in the case of $(\beta_0,\beta_-,\beta_+,\alpha_-,\alpha_+)=(\frac{2-d}{2},0,0,\frac14,\frac14)$ for the propagator $e^{itD}$ associated with the wave equation. The estimate \eqref{ineq:null +-}, as well as the corresponding $(++)$ case (while \eqref{ineq:null +-} is $(+-)$ case),
\begin{equation}\label{ineq:null ++}
\|D^{\beta_0}D_-^{\beta_-}D_+^{\beta_+}(e^{itD}fe^{itD}g)\|_{L^2(\mathbb R^{d+1})}
\leq
C\|f\|_{\dot{H}^{\alpha_1}}\|g\|_{\dot{H}^{\alpha_2}}
\end{equation}
has found important applications in study of nonlinear wave equations. This type of estimate has been studied back in work of Beals \cite{Be83} and  Klainerman--Machedon \cite{KM93,KM96a,KM97a}. A complete characterization of the admissible exponents $(\beta_0,\beta_-,\beta_+,\alpha_-,\alpha_+)$ for \eqref{ineq:null +-} and \eqref{ineq:null ++} was eventually obtained by Foschi--Klainerman \cite{FK00}. Such a characterization when the $L_{t,x}^2$ norm on the left-hand side of \eqref{ineq:null +-} is replaced by $L_t^qL_x^r$ has also drawn great attention. Using bilinear Fourier restriction techniques, Bourgain \cite{Br95} made a breakthrough contribution, then Wolff \cite{Wo01} and Tao \cite{Tao01} (in the endpoint case; see also Lee \cite{Lee06} and Tataru \cite{Tt07}) completed the diagonal case $q=r$. For the non-diagonal case we refer readers to \cite{LV08} due to Lee--Vargas for a complete characterization when $d\geq4$ and partial results when $d=2$, $3$. Soon later Lee--Rogers--Vargas \cite{LRV08} completed $d=3$, but a gap between necessary and sufficient conditions still remains when $d=2$.\\

As a means of comparing our bilinear estimate \eqref{ineq:main} with \eqref{ineq:BV}, we note that using the trivial bound
\begin{equation}\label{ineq:keyest wave}
\frac{\phi_s(|\eta_1|)\phi_s(|\eta_2|)-\eta_1\cdot\eta_2-s^2}{\phi_s(|\eta_1|)\phi_s(|\eta_2|)-\eta_1\cdot\eta_2+s^2}\le1,
\end{equation}
we estimate our kernel as 
\begin{equation}\label{ineq:kernelest wave}
\K_\frac12^{\frac{d-2}{2}+2\beta}(\eta_1,\eta_2)
\le
\K_0^{\frac{d-3}{2}+2\beta}(\eta_1,\eta_2).
\end{equation}
For $\beta\geq\frac{3-d}{4}$, it follows that 
\begin{align}\label{ineq:biest D_-D_+ }
\||\square|^{\beta}(e^{it\phi_s(D)}f \overline{e^{it\phi_s(D)}g})\|_{L^2(\mathbb{R}^{d+1})}
\le
C
\|\phi_s(D)^{\frac{d-1}{4}+\beta}f\|_{L^2(\mathbb{R}^d)}
\|\phi_s(D)^{\frac {d-1}{4}+\beta}g\|_{L^2(\mathbb{R}^d)}
\end{align}
for some absolute constant $C$ (for instance, use $\phi_s(|\eta_1|)\phi_s(|\eta_2|)-\eta_1\cdot \eta_2-s^2\leq 2\phi_s(|\eta_1|)\phi_s(\eta_2|)$), which, as in the discussion for the Beltran--Vega bilinear estimate, places Theorem \ref{thm:main} in the framework of null-form type estimates. If we \textit{formally} set $\beta=\frac{2-d}{4}$ in \eqref{ineq:biest D_-D_+ } to get data with regularity whose order is $\frac14$ as in \eqref{ineq:biest D}, the order of ``smoothing" from $|\square|^\beta$ becomes $2\beta=\frac{2-d}{2}$, which is compatible with \eqref{ineq:biest D}. Unfortunately, $\frac{2-d}{4}$ is outside the range $\beta\geq\frac{3-d}{4}$ and, in fact, as we shall see in Proposition \ref{prop:necessary condition}, $\beta\geq \frac{3-d}{4}$ is \emph{a necessary condition} for \eqref{ineq:biest D_-D_+ }. Nevertheless, as an application of Theorem \ref{thm:main} one can widen the range to, at least, $\beta\geq\frac{2-d}{4}$ if one considers radially symmetric data. We shall state this result as part of the forthcoming Corollary \ref{cor:biest radial}. In addition, for a large range of $\beta$ we shall in fact obtain the \emph{optimal constant} for such null-form type estimates; to state our result, we introduce the constant
\[
\F(\beta,d)
:=
2^{d-3+4\beta}\pi^{-\frac d2}\frac{\Gamma(\tfrac d2)\Gamma(\tfrac{d-1}{2}+2\beta)}{(d-2+2\beta)\Gamma(\tfrac{3d-5}{2}+2\beta)}.
\]

\begin{corollary}\label{cor:biest radial}
Let $d\ge2$, $\beta\geq\frac{2-d}{4}$. Then, there exists a constant $C>0$ such that \eqref{ineq:biest D_-D_+ } holds whenever $f$ and $g$ are radially symmetric.
Moreover, for $\beta\in[\frac{2-d}{4},\tfrac{3-d}{4}]\cup[\tfrac{5-d}{4},\infty)$, the optimal constant in \eqref{ineq:biest D_-D_+ } for radially symmetric $f$ and $g$ is $\F(\beta,d)^\frac12$, but there does not exist a non-trivial pair of functions $(f,g)$ that attains equality.
\end{corollary}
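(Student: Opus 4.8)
The plan is to extract both parts of the corollary from Theorem \ref{thm:main} by computing the angular integral explicitly for radial data. Write $\widehat f(\eta)=F(|\eta|)$, $\widehat g(\eta)=G(|\eta|)$, and put $b:=\tfrac{d-2}{2}+2\beta$, so that $b\ge0$ exactly when $\beta\ge\tfrac{2-d}{4}$. Passing to polar coordinates on the right-hand side of \eqref{ineq:main} and invoking the identity
\[
\int_{\mathbb S^{d-1}}\int_{\mathbb S^{d-1}}\Phi(\omega_1\cdot\omega_2)\,\d\sigma(\omega_1)\,\d\sigma(\omega_2)=|\mathbb S^{d-1}|\,|\mathbb S^{d-2}|\int_{-1}^{1}\Phi(t)(1-t^2)^{\frac{d-3}{2}}\,\d t ,
\]
the factor $\K_\frac12^{\frac{d-2}{2}+2\beta}(\eta_1,\eta_2)$ in the integrand of \eqref{ineq:main}, averaged over the spheres of radii $r_1,r_2$, becomes the one-dimensional quantity
\[
S(r_1,r_2):=|\mathbb S^{d-1}|\,|\mathbb S^{d-2}|\int_{-1}^{1}\frac{(A-Bt-s^2)^{b}}{(A-Bt+s^2)^{1/2}}(1-t^2)^{\frac{d-3}{2}}\,\d t ,\qquad A:=\phi_s(r_1)\phi_s(r_2),\quad B:=r_1r_2 .
\]
Since $A\ge B+s^2$ (by $r_1^2+r_2^2\ge2r_1r_2$) the integrand is nonnegative, and under $\beta\ge\tfrac{2-d}{4}$ it is integrable. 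With Plancherel's theorem, this reduces \eqref{ineq:biest D_-D_+ } for radial $f,g$ to the single pointwise estimate $R(r_1,r_2)\le C'$, where $R(r_1,r_2):=\phi_s(r_1)\phi_s(r_2)\,S(r_1,r_2)\,(\phi_s(r_1)\phi_s(r_2))^{-(\frac{d-1}{2}+2\beta)}$ and $C'$ is an explicit constant multiple of the square of the constant in \eqref{ineq:biest D_-D_+ }. By the scaling $e^{it\phi_s(D)}(f(s\,\cdot))(x)=(e^{ist\phi_1(D)}f)(sx)$ we may also take $s=1$ when convenient.

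The key observation is that $R$ depends on $(r_1,r_2)$ only through the two scale-invariant parameters $\mu:=s^2/A$ and $\nu:=B/A$: since $A-Bt\mp s^2=A(1-\nu t\mp\mu)$ and $\tfrac12+b=\tfrac{d-1}{2}+2\beta$, the powers of $A$ cancel and
\[
R=h(\mu,\nu):=|\mathbb S^{d-1}|\,|\mathbb S^{d-2}|\int_{-1}^{1}\frac{(1-\nu t-\mu)^{b}}{(1-\nu t+\mu)^{1/2}}(1-t^2)^{\frac{d-3}{2}}\,\d t .
\]
Using $A^2-B^2=s^2(s^2+r_1^2+r_2^2)$ together with $r_1^2+r_2^2\ge2r_1r_2$, one checks that $(\mu,\nu)$ ranges over exactly the closed triangle $T=\{\mu,\nu\ge0:\mu+\nu\le1\}$, with $\mu+\nu=1$ precisely on the diagonal $r_1=r_2$ and the vertex $(\mu,\nu)=(0,1)$ realised as the limit $r_1=r_2\to\infty$. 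Since $b\ge0$, $h$ extends continuously to $\overline T$ (this needs only $d-2+2\beta>0$, which is automatic for $d\ge3$) and is therefore bounded; this proves the first assertion.

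For the optimal constant we would locate $\sup_T h$. Because $b\ge0$, increasing $\mu$ strictly decreases both factors in the integrand, so $\partial_\mu h<0$ and the supremum lies on the edge $\mu=0$, where $h(0,\nu)=|\mathbb S^{d-1}|\,|\mathbb S^{d-2}|\int_{-1}^{1}(1-\nu t)^{\,b-\frac12}(1-t^2)^{\frac{d-3}{2}}\,\d t$. Differentiating in $\nu$ and symmetrising in $t$ (using that $(1-t^2)^{(d-3)/2}$ is even) shows that $\tfrac{\d}{\d\nu}h(0,\nu)$ carries the sign of $(\tfrac12-b)(\tfrac32-b)$; hence $\nu\mapsto h(0,\nu)$ is nondecreasing exactly when $b\notin(\tfrac12,\tfrac32)$, i.e.\ when $\beta\notin(\tfrac{3-d}{4},\tfrac{5-d}{4})$ — precisely the interval excluded in the statement. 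Thus for $\beta\in[\tfrac{2-d}{4},\tfrac{3-d}{4}]\cup[\tfrac{5-d}{4},\infty)$,
\[
\sup_T h=h(0,1)=|\mathbb S^{d-1}|\,|\mathbb S^{d-2}|\int_{-1}^{1}(1-t)^{\,b-\frac12}(1-t^2)^{\frac{d-3}{2}}\,\d t ,
\]
a Beta integral evaluated by Euler's formula as a quotient of Gamma functions; a bookkeeping of these Gamma factors through Plancherel together with $\KG(\beta,d)$ then yields precisely $\F(\beta,d)$, so that \eqref{ineq:biest D_-D_+ } holds for radial data with constant $\F(\beta,d)^{1/2}$.

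It remains to show that $\F(\beta,d)^{1/2}$ is optimal and unattained. At any finite $(r_1,r_2)\in(0,\infty)^2$ we have $\mu>0$, so, since $\partial_\mu h<0$, $R(r_1,r_2)=h(\mu,\nu)<h(0,1)=\sup_T h$ strictly; hence the pointwise bound, and with it \eqref{ineq:biest D_-D_+ }, is strict for every non-trivial radial pair, which already rules out extremisers. For optimality we would use a concentrating sequence: take $f_n,g_n$ radial with $\widehat{f_n}=\widehat{g_n}$ a normalised bump supported in the thin shell $\{|\xi|\in[n,n+\varepsilon_n]\}$, $\varepsilon_n\to0$. Then $(\mu,\nu)\to(0,1)$, so the right-hand side of \eqref{ineq:main} is asymptotically $\F(\beta,d)$ times $\|\phi_s(D)^{\frac{d-1}{4}+\beta}f_n\|_{L^2}^2\,\|\phi_s(D)^{\frac{d-1}{4}+\beta}g_n\|_{L^2}^2$, while on such thin-shell radial data the single inequality in the proof of Theorem \ref{thm:main} — a Cauchy--Schwarz inequality over the fibres of the map $(\xi_1,\xi_2)\mapsto(\xi_1-\xi_2,\,\phi_s(|\xi_1|)-\phi_s(|\xi_2|))$ — is asymptotically saturated, because $\widehat{f_n}(\xi_1)\overline{\widehat{g_n}(\xi_2)}$ is constant on each fibre up to an error $O(\varepsilon_n)$; hence the ratio in \eqref{ineq:biest D_-D_+ } tends to $\F(\beta,d)^{1/2}$. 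We expect this last step to be the main obstacle: making the asymptotic sharpness of that Cauchy--Schwarz step along a \emph{radial} sequence quantitative enough to recover the constant $\F(\beta,d)^{1/2}$, rather than merely an upper bound for it. A more routine technical point is the $\nu$-monotonicity computation, which must be done with enough care to place the endpoints $\beta=\tfrac{3-d}{4}$ and $\beta=\tfrac{5-d}{4}$ exactly.
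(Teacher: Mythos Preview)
Your reduction to the two--parameter function $h(\mu,\nu)$ and the observation that $\partial_\mu h<0$ whenever $b\ge0$ is correct, and in fact streamlines the paper's argument: for the range $\beta\in[\tfrac{2-d}{4},\tfrac{3-d}{4}]$ the paper works instead with a function $\Xi(\nu,\upsilon)$ (essentially your $h$ in the coordinates $\nu_{\text{paper}}=\mu$, $\sqrt{1-\nu_{\text{paper}}^2-\upsilon^2}=\nu$) and establishes $\Xi(\nu,\upsilon)\le\Xi(0,\upsilon)$ via a case split on whether $\nu_{\text{paper}}\lessgtr\sqrt{(1-\upsilon^2)/2}$, which your direct monotonicity in $\mu$ avoids. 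The $\nu$--monotonicity on the edge $\mu=0$ is the content of the paper's Lemma~\ref{lem:monotonicity}, and your endpoint analysis matches theirs. Your non-existence argument via strict inequality $h(\mu,\nu)<h(0,1)$ for $\mu>0$ is also essentially the paper's.

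The genuine gap is the sharpness step, and you have correctly identified it. Your thin-shell sequence would force you to quantify how close the arithmetic--geometric mean step in the proof of Theorem~\ref{thm:main} is to equality --- a doable but unpleasant estimate. The paper sidesteps this entirely: the functions $f_a$ with $\widehat{f_a}(\xi)=e^{-a\phi_s(|\xi|)}/\phi_s(|\xi|)$ are \emph{radial} and are \emph{exact} extremisers for \eqref{ineq:main} (they satisfy the equality condition $\phi_s(|\eta_1|)\phi_s(|\eta_2|)\widehat f(\eta_1)\widehat g(\eta_2)=\phi_s(|\eta_3|)\phi_s(|\eta_4|)\widehat f(\eta_3)\widehat g(\eta_4)$ on the support of the delta). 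Hence for $f=g=f_a$ the left- and right-hand sides of \eqref{ineq:main} agree exactly, and sharpness of $\F(\beta,d)^{1/2}$ reduces to a direct computation showing that the ratio of the right-hand side of \eqref{ineq:main} to $\|\phi_s(D)^{\frac{d-1}{4}+\beta}f_a\|_{L^2}^4$ tends to $\F(\beta,d)$ as $a\to0$. In your language, $f_a$ is a radial family along which $(\mu,\nu)$ concentrates at $(0,1)$ \emph{and} the sole inequality in Theorem~\ref{thm:main} is already an equality; this removes precisely the obstacle you flagged.
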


In the case of the wave propagator when $s\to0$, the estimate \eqref{ineq:biest D_-D_+ } becomes
\begin{equation}\label{ineq:Strichartz sharp BJO}
\||\square|^\beta(e^{itD}f\overline{e^{itD}g})\|_{L^2(\mathbb R^{d+1})}
\leq
\F(\beta,d)^\frac12
\|f\|_{\dot{H}^{\frac{d-1}{4}+\beta}(\mathbb R^d)}
\|g\|_{\dot{H}^{\frac{d-1}{4}+\beta}(\mathbb R^d)}
\end{equation}
and, in certain situation, it is known that the constant $\F(\beta,d)^\frac12$ is optimal. In the case $\beta=0$ and $d=3$, pioneering work of Foschi \cite{Fs07} established the optimality of the constant $\F(0,3)^\frac12$. The constant $\F(\beta,d)^\frac12$ is also known to be optimal when $(\beta,d)=(0,4)$ and $(\beta,d)=(0,5)$; the latter was established by Bez--Rogers \cite{BR13} building on work of Foschi and obtained via the bilinear estimate  
\begin{equation}\label{ineq:Strichartz sharp BJO beta=0}
\|e^{itD}f\overline{e^{itD}g}\|_{L^2(\mathbb R^{d+1})}^2
\le
\W(0,d)
\int_{(\mathbb R^{d})^2}|
\widehat{f}(\eta_1)|^2|\widehat{g}(\eta_2)|^2|\eta_1||\eta_2|
K_0^\mathrm{BR}(\eta_1,\eta_2)
\,\d\eta_1\d\eta_2.
\end{equation}
Here, $\W(\beta,d)$ turns out to be $\KG(\beta,d)$ and $K_{\beta}^\mathrm {BR}$ formally coincides with the special case of our kernel $\K_{\frac12}^{\frac{d-2}{2}+2\beta}$ when $s=0$. The optimality of $\F(0,4)^\frac12$ in \eqref{ineq:Strichartz sharp BJO} was proved by Bez--Jeavons \cite{BJ15} by making use of \eqref{ineq:Strichartz sharp BJO beta=0}, polar coordinates and techniques from the theory of spherical harmonics. 

Soon later, imposing an additional radial symmetry on the initial data $f$ and $g$, Bez--Jeavons--Ozawa proved the optimality of $\F(\beta,d)^\frac12$ in \eqref{ineq:Strichartz sharp BJO} when $d\geq2$ and\footnote{It can also be seen from the form of the sharp constant that widening the range $\beta>\beta_d$ is impossible.} $\beta>\beta_d:=\max\{\frac{1-d}{4},\frac{2-d}{2}\}$ by establishing the null-form type bilinear estimate 
\begin{equation}\label{ineq:biest sharp BJO}
\||\square|^\beta(e^{itD}f\overline{e^{itD}g})\|_{L^2(\mathbb R^{d+1})}^2
\le
\W(\beta,d)
\int_{(\mathbb R^{d})^2}|
\widehat{f}(\eta_1)|^2|\widehat{g}(\eta_2)|^2|\eta_1||\eta_2|
K_\beta^\mathrm{BR}(\eta_1,\eta_2)
\,\d\eta_1\d\eta_2,
\end{equation}
which again coincides\footnote{Because of this fact, we expect that \eqref{ineq:biest D_-D_+ } is valid with $C=\F(\beta,d)^\frac12$ for $\beta\in(\beta_d,\frac{2-d}{4})$ as well, but we do not pursue this here.} with \eqref{ineq:main} formally substituted with $s=0$.  They accomplished the result by taking advantage of an exceedingly nice structure of the homogeneity that the kernel $\K_\frac12^{\frac{d-2}{2}+2\beta}$ ($=K_\beta^\mathrm{BR}$) possesses, specifically, when $s=0$;
\[
\K_\frac12^{\frac{d-2}{2}+2\beta}(r_1\theta_2,r_2\theta_2)
=
(r_1r_2)^{\frac{d-3}{2}+2\beta}(1-\theta_1\cdot\theta_2)^{\frac{d-3}{2}+2\beta},\qquad r_1,r_2>0,\ \theta_1,\theta_2\in \mathbb S^{d-1},
\]
This property completely divides the right-hand side of \eqref{ineq:biest sharp BJO} into radial and angular components if the initial data are radial symmetric.
In contrast, our concern is the case $s>0$ and the lack of homogeneity in the kernel causes significant difficulty in this regard. This can be seen as responsible for the gap $(\frac{3-d}{4},\frac{5-d}{4})$ (as well as the range $(\beta_d,\frac{2-d}{4})$) in Corollary \ref{cor:biest radial}, for which we also expect \eqref{ineq:biest D_-D_+ } still holds with $C=\F(\beta,d)^\frac12$.\\

 In the current paper, we prove Corollary \ref{cor:biest radial} by first making use of our bilinear estimate \eqref{ineq:main}. One can show, however, that it is impossible to obtain the optimality of $\F(\beta,d)^\frac12$ in \eqref{ineq:biest D_-D_+ } for radial data and any $\beta\in(\frac{3-d}{4},\frac{5-d}{4})$ once one makes use of \eqref{ineq:main} as a first step; somewhat surprisingly given that \eqref{ineq:main} is sharp. This is a consequence of Lemma \ref{lem:B<B} in the specific setting.

There are some special cases of $\beta$; the endpoints $\frac{3-d}{4}$ and $\frac{5-d}{4}$ of the gap, at which we can remove the radial symmetry hypothesis on the initial data and still keep the optimal constants.

\begin{corollary}\label{cor:wave}
Let $d\ge2$. Then, the estimate \eqref{ineq:corollaries} holds with the optimal constant $C=\F(\beta,d)^\frac12$
for $(\alpha,\beta)=(\frac12,\frac{3-d}{4})$ and $(\alpha,\beta)=(1,\frac{5-d}{4})$, but there are no extremisers. Furthermore, when $(\alpha,\beta)=(1,\frac{5-d}{4})$, we have the refined Strichartz estimate
\begin{align}\label{ineq:Strichartz refined general J}
\||\square|^\frac{5-d}{4} |e^{it\phi_s(D)}f|^2\|_{L^2(\mathbb{R}^{d+1})}
\le
\F(\tfrac{5-d}{4},d)^\frac12
\left(
\|\phi_s(D)f\|_{L^2(\mathbb R^d)}^4
-s^2
\|\phi_s(D)^\frac12f\|_{L^2(\mathbb R^d)}^4
\right)^\frac12,
\end{align}
where the constant is optimal and there are no extremisers. 
\end{corollary}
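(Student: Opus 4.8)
The plan is to specialise Theorem~\ref{thm:main} to $g=f$ and to exploit the fact that at the two endpoints $\beta=\frac{3-d}{4}$ and $\beta=\frac{5-d}{4}$ the kernel estimate \eqref{ineq:kernelest wave} collapses the right-hand side of \eqref{ineq:main} to an expression in which the angular interaction between $\eta_1$ and $\eta_2$ is either absent or enters only through a non-negative term that is then discarded; this is precisely what renders the radial hypothesis of Corollary~\ref{cor:biest radial} dispensable. Combining \eqref{ineq:main} (with $g=f$) and \eqref{ineq:kernelest wave},
\begin{align*}
&\||\square|^\beta|e^{it\phi_s(D)}f|^2\|_{L^2(\mathbb R^{d+1})}^2\\
&\quad\le\KG(\beta,d)\int_{\mathbb R^{2d}}|\widehat f(\eta_1)|^2|\widehat f(\eta_2)|^2\,\phi_s(|\eta_1|)\phi_s(|\eta_2|)\bigl(\phi_s(|\eta_1|)\phi_s(|\eta_2|)-\eta_1\cdot\eta_2-s^2\bigr)^{\frac{d-3}{2}+2\beta}\d\eta_1\d\eta_2,
\end{align*}
and $\frac{d-3}{2}+2\beta$ equals $0$ when $\beta=\frac{3-d}{4}$ and $1$ when $\beta=\frac{5-d}{4}$; both endpoints lie in the admissible range $\beta>\frac{1-d}{4}$ of Theorem~\ref{thm:main}.

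When $\beta=\frac{3-d}{4}$ the last factor is identically $1$, the double integral factorises, and Plancherel's theorem identifies it with $(2\pi)^{2d}\|\phi_s(D)^{1/2}f\|_{L^2}^4$; since $\KG(\tfrac{3-d}{4},d)(2\pi)^{2d}=\F(\tfrac{3-d}{4},d)$, this is \eqref{ineq:corollaries} with $\alpha=\frac12$. When $\beta=\frac{5-d}{4}$ the last factor is affine in $\eta_1\cdot\eta_2$, and expanding the product shows the double integral equals
\[
(2\pi)^{2d}\|\phi_s(D)f\|_{L^2}^4-\Bigl|\int_{\mathbb R^d}\phi_s(|\eta|)|\widehat f(\eta)|^2\,\eta\,\d\eta\Bigr|^2-(2\pi)^{2d}s^2\|\phi_s(D)^{1/2}f\|_{L^2}^4 .
\]
Multiplying by $\KG(\tfrac{5-d}{4},d)$, using $\KG(\tfrac{5-d}{4},d)(2\pi)^{2d}=\F(\tfrac{5-d}{4},d)$, and discarding the non-negative middle term gives \eqref{ineq:Strichartz refined general J}; discarding in addition the last term gives \eqref{ineq:corollaries} with $\alpha=1$. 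The identities $\KG(\beta,d)(2\pi)^{2d}=\F(\beta,d)$ at $\beta=\frac{3-d}{4}$ and $\beta=\frac{5-d}{4}$ are verified directly from the definitions using the Legendre duplication formula for $\Gamma$.

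For optimality, the estimates now hold for all $f$ with constant $\F(\beta,d)^\frac12$, and since $\frac{3-d}{4},\frac{5-d}{4}\in[\tfrac{2-d}{4},\tfrac{3-d}{4}]\cup[\tfrac{5-d}{4},\infty)$, Corollary~\ref{cor:biest radial} already establishes that this constant is optimal within the radial class; the corresponding near-extremising sequence may be taken diagonal ($g=f$), radial, and concentrated at high frequency, a regime in which \eqref{ineq:kernelest wave} becomes asymptotically an equality and both the discarded cross term $\bigl|\int\phi_s(|\eta|)|\widehat f(\eta)|^2\eta\,\d\eta\bigr|^2$ and the $s^2$-term in \eqref{ineq:Strichartz refined general J} are of lower order, so $\F(\beta,d)^\frac12$ is optimal in \eqref{ineq:corollaries} and in \eqref{ineq:Strichartz refined general J}. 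For the non-existence of extremisers: equality in either estimate would force \eqref{ineq:kernelest wave} to be an equality $|\widehat f(\eta_1)|^2|\widehat f(\eta_2)|^2\,\d\eta_1\d\eta_2$-almost everywhere, i.e.\ $\K_\frac12^{\frac{d-2}{2}+2\beta}=\K_0^{\frac{d-3}{2}+2\beta}$; writing $A:=\phi_s(|\eta_1|)\phi_s(|\eta_2|)-\eta_1\cdot\eta_2-s^2\ge0$, this reduces to $A^{1/2}=(A+2s^2)^{1/2}$, impossible for $s>0$, whence $f\equiv0$. (When $\beta=\frac{3-d}{4}$ the exponent $\frac{d-3}{2}+2\beta$ vanishes, $\K_0^{0}$ is read as the constant $1$, and the bound used is the strict inequality $\K_\frac12^{1/2}=(A/(A+2s^2))^{1/2}<1$.)

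The one genuinely non-routine ingredient is the constant identification $\KG(\beta,d)(2\pi)^{2d}=\F(\beta,d)$ at exactly these two exponents, together with the careful tracking of $(2\pi)$-powers through Plancherel; everything of substance has already been carried by Theorem~\ref{thm:main} and Corollary~\ref{cor:biest radial}. The conceptual point is merely that at the endpoints of the gap $\bigl(\tfrac{3-d}{4},\tfrac{5-d}{4}\bigr)$ the bound \eqref{ineq:kernelest wave} leaves a weight that no longer couples the \emph{directions} of $\eta_1$ and $\eta_2$, so the radial reduction is unnecessary.
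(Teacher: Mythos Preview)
Your argument is correct and follows the same route as the paper. The derivation of the inequalities via Theorem~\ref{thm:main} and \eqref{ineq:kernelest wave}, the observation that at $\beta=\frac{3-d}{4}$ and $\beta=\frac{5-d}{4}$ the surviving exponent is $0$ or $1$ so no angular coupling remains, the discarding of the non-negative cross term $\bigl|\int\phi_s(|\eta|)|\widehat f(\eta)|^2\eta\,\d\eta\bigr|^2$, and the non-existence argument (equality in \eqref{ineq:kernelest wave} forces $s=0$) are exactly what the paper does in Section~\ref{sec:sharpness of constants}. The constant identities $\KG(\beta,d)(2\pi)^{2d}=\F(\beta,d)$ at the two endpoints are indeed immediate from Legendre duplication.

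The only organisational difference is in the optimality step: the paper computes directly with $f_a$ from \eqref{extremisers} and lets $a\to0$, whereas you appeal to Corollary~\ref{cor:biest radial}. This is legitimate, but note that Corollary~\ref{cor:biest radial} as stated concerns the bilinear estimate \eqref{ineq:biest D_-D_+ } with two functions, so to transfer optimality to the diagonal case $g=f$ you need more than its statement: you need that its near-extremising sequence can be taken with $f=g$. You assert this (``diagonal, radial, concentrated at high frequency''), and it is true because the paper's sharpness proof for Corollary~\ref{cor:biest radial} already uses $f=g=f_a$, $a\to0$; but strictly speaking you are then relying on the \emph{proof} of Corollary~\ref{cor:biest radial} rather than its statement, which amounts to the paper's direct computation anyway. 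The same remark applies to optimality in \eqref{ineq:Strichartz refined general J}: your observation that the $s^2$-term is of lower order as $a\to0$ is precisely the paper's argument at the end of its wave-regime subsection.
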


Corollary \ref{cor:wave} generalizes the following recent results. In the context of the Klein--Gordon equation, Quilodr\'an \cite{Qu15} appropriately developed Foschi's argument in \cite{Fs07} and proved the sharp Strichartz estimate
\begin{equation}\label{ineq:Strichartz sharp Q}
\|e^{it\phi_1(D)}f\|_{L^q(\mathbb R^{d+1})}
\leq
\H(d,q)
\|f\|_{H^\frac12(\mathbb R^d)}
\end{equation}
for $(d,q)=(2,4)$, $(2,6)$, $(3,4)$, which are the endpoint cases of the admissible range of exponent $q$, namely, 
\[
\frac{2(d+2)}{d}\le q\le \frac{2(d+1)}{d-1}.
\]
The constant $\H(d,q)$ denotes the optimal constant so that \eqref{ineq:Strichartz sharp Q} in the case $(d,q)=(3,4)$ is recovered by Corollary \ref{cor:wave} in the case $(\alpha,\beta)=(\frac12,\frac{3-d}{4})$ and $\F(0,3)^\frac14=\H(4,3)$ holds. In \cite{Qu15}, Quilod\'an also proved that there is no extremiser which attains \eqref{ineq:Strichartz sharp Q} for $(d,q)=(2,4)$, $(2,6)$, $(3,4)$.
Later Carneiro--Oliveira e Silva--Sousa \cite{COS19} further revealed the nature of 
\eqref{ineq:Strichartz sharp Q}
for $d=1$, $2$, by answering questions raised in \cite{Qu15}; in particular, they found the best constant in \eqref{ineq:Strichartz sharp Q} for $(d,q)=(1,6)$ and absence of the extremisers (the case $(d,q)=(1,6)$ is the endpoint of the admissible range of $6\leq q\leq \infty$ when $d=1$). Meanwhile, they also established there exist extremisers in the non-endpoint cases in low dimensions $d=1$, $2$. A subsequent study by the same authors in collaboration with Stovall \cite{COSS18} proved the analogous results in the non-endpoint cases for higher dimensions $d\geq3$ by using some tools from bilinear restriction theory.

In \cite{Jv14}, Jeavons obtained the following refined Strichartz estimate in five spatial dimensions
\begin{align}\label{ineq:Strichartz refined J}
\|e^{it\phi_s(D)}f\|_{L^4(\mathbb R^{5+1})}
\leq
\F(0,5)^\frac14
\left(
\|\phi_s(D)f\|_{L^2(\mathbb R^5)}^4
-
s^2\|\phi_s(D)^\frac12f\|_{L^2(\mathbb R^5)}^4
\right)^\frac14,
\end{align}
which recovers the inequality \eqref{ineq:Strichartz sharp BJO} when $(\beta,d)=(0,5)$ in the limit $s\to0$. Moreover, by simply omitting the negative second term, it follows that
\begin{align*}
\|e^{it\phi_1(D)}f\|_{L^4(\mathbb{R}^{5+1})}
\le
\F(0,5)^\frac14
\|f\|_{H^1(\mathbb R^5)},
\end{align*}
where the constant $\F(0,5)^\frac14=(24\pi^2)^{-\frac14}$ is still sharp. These results are recovered too by Corollary \ref{cor:wave} in the case $(\alpha,\beta)=(1,\frac{5-d}{4})$.

\subsection{Non-wave regime}
One may examine the Beltran--Vega bilinear estimate \eqref{ineq:BV} from a somewhat different perspective to that taken in our earlier discussion which led to \eqref{ineq:biest D}. 
For $d\geq2$ the kernel $K^\BV$ can be transformed as
\begin{align*}
K^\BV(\eta_1,\eta_2)
&=
|\mathbb S^{d-2}|\int_0^{\pi}
\frac{
\phi_s(|\eta_1|)+\phi_s(|\eta_2|)
}{
(\phi_s(|\eta_1|)+\phi_s(|\eta_2|))^2-|\eta_1+\eta_2|^2\cos^2 \theta
}
(\sin\theta)^{d-2}
\,\d\theta\\
&=
|\mathbb S^{d-2}|\int_{-\frac\pi2}^{\frac\pi2}
\frac{
\tau(\cos\theta)^{d-2}
}{
(\tau^2-|\xi|^2)+|\xi|^2\cos^2\theta
}
\,\d\theta\\
&=
|\mathbb S^{d-2}|\int_{-\frac\pi2}^{\frac\pi2}
\frac{
\tau(\cos\theta)^{d-2}
}{
(\tau^2-|\xi|^2)\tan^2\theta+\tau^2
}
\frac{\d\theta}{\cos^2\theta},
\end{align*}
where we denote $\tau=\phi_s(|\eta_1|)+\phi_s(|\eta_2|)$ and $\xi=\eta_1+\eta_2$ for the sake of convenience. By applying the fact\footnote{Note that the equality holds when $d=2$} $(\cos \theta)^{d-2}\leq1$, the change of variables $\tan\theta\mapsto\frac{\tau}{\sqrt{\tau^2-|\xi|^2}}x$, it follows that
\[
K^\BV(\eta_1,\eta_2)
\leq
\frac{|\mathbb S^{d-2}|}{\sqrt{\tau^2-|\xi|^2}}\int_{-\infty}^\infty\frac{\d x}{x^2+1}
=
\frac{\pi|\mathbb S^{d-2}|}{\sqrt{\tau^2-|\xi|^2}}.
\]
Hence, another key relation (instead of \eqref{ineq:kernelest wave});
\begin{equation}\label{ineq:keyest non-wave}
\phi_s(|\eta_1|)\phi_s(|\eta_2|)-\eta_1\cdot\eta_2\geq s^2,
\end{equation}
implies
\[
K^\BV(\eta_1,\eta_2)
\leq 
\frac{\pi|\mathbb S^{d-2}|}{2s},
\]
with which, as informed earlier, the inequality \eqref{ineq:BV} directly yields \eqref{ineq:Strichartz non-wave}.
By comparison with \eqref{ineq:biest D}, the regularity level on the initial data has increased to $H^\frac12$ but this has allowed for a wider range of $d$ which, in particular, includes $d=2$ in which case \eqref{ineq:Strichartz non-wave} coincides with the sharp $H^\frac12\to L_{x,t}^4$ Strichartz estimate  \eqref{ineq:Strichartz, d=2} obtained by Quilodr\'an.
Note that, in the non-wave regime, we are not allowed to let $s\to0$ because of the factor $s^{-1}$ appearing in the constant.

On the other hand, Theorem \ref{thm:main} also yields \eqref{ineq:Strichartz, d=2} as a special case of the following family of sharp null-form type estimates valid in all dimensions $d\ge2$. Indeed, since we have another kernel estimate
\begin{equation}\label{ineq:kernelest non-wave}
\K_\frac12^{\frac{d-2}{2}+2\beta}(\eta_1,\eta_2)
\leq
2^{-\frac12}
\K_0^{\frac{d-2}{2}+2\beta}(\eta_1,\eta_2)
s^{-1}
\end{equation}
via \eqref{ineq:keyest non-wave}, we immediately deduce the following from Theorem \ref{thm:main}.

\begin{corollary}\label{cor:non-wave}
Let $d\ge2$. Then the estimate \eqref{ineq:corollaries} holds with the optimal constant 
$$
C
=
\left(\frac{2^{-d+1}\pi^{\frac{-d+2}{2}}}{s{\Gamma(\frac d2)}}\right)^\frac12
$$
for $(\alpha,\beta)=(\frac12,\frac{2-d}{4})$, but there are no extremisers. Furthermore, when $(\alpha,\beta)=(1,\frac{4-d}{4})$, we have the refined Strichartz estimate
\begin{align}\label{ineq:Strichartz refined non-wave}
\||\square|^\frac{4-d}{4} |e^{it\phi_s(D)}f|^2\|_{L^2(\mathbb{R}^{d+1})}^2
\le
\left(
\frac{2^{-d+1}\pi^{\frac{-d+2}{2}}}{s\Gamma(\frac{d+2}{2})}
\right)^\frac12
\left(
\|\phi_s(D)f\|_{L^2(\mathbb R^d)}^4
-s^2
\|\phi_s(D)^\frac12f\|_{L^2(\mathbb R^d)}^4
\right)^\frac12,
\end{align} 
where the constant is optimal and there are no extremisers.
\end{corollary}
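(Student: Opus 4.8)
\textbf{Proof proposal for Corollary \ref{cor:non-wave}.}

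The plan is to derive both assertions directly from Theorem \ref{thm:main} combined with the kernel estimate \eqref{ineq:kernelest non-wave}, and then to track the constants carefully enough to establish optimality and the absence of extremisers. First I would treat the case $(\alpha,\beta)=(\frac12,\frac{2-d}{4})$. Setting $f=g$ in \eqref{ineq:main} and inserting \eqref{ineq:kernelest non-wave}, the right-hand side is bounded by $2^{-1/2}s^{-1}\KG(\tfrac{2-d}{4},d)$ times $\int |\widehat f(\eta_1)|^2|\widehat f(\eta_2)|^2 \phi_s(|\eta_1|)\phi_s(|\eta_2|)\K_0^{\frac{d-2}{2}+2\beta}(\eta_1,\eta_2)\,\d\eta_1\d\eta_2$. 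With $\beta=\frac{2-d}{4}$ one has $\frac{d-2}{2}+2\beta=0$, so $\K_0^0\equiv1$ and the double integral factorises as $\big(\int|\widehat f(\eta)|^2\phi_s(|\eta|)\,\d\eta\big)^2=\|\phi_s(D)^{1/2}f\|_{L^2}^4$, which is exactly $\|\phi_s(D)^\alpha f\|_{L^2}^4$ for $\alpha=\frac12$. Thus \eqref{ineq:corollaries} holds, and the constant $C$ equals $\big(2^{-1/2}s^{-1}\KG(\tfrac{2-d}{4},d)\big)^{1/2}$; substituting $\beta=\frac{2-d}{4}$ into the formula for $\KG(\beta,d)$ and simplifying the gamma factors via $\Gamma(\tfrac12)=\sqrt\pi$ and the duplication formula should yield precisely $\big(2^{-d+1}\pi^{(2-d)/2}/(s\,\Gamma(\tfrac d2))\big)^{1/2}$, matching the stated constant.

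For the second assertion I would take $(\alpha,\beta)=(1,\frac{4-d}{4})$, so that $\frac{d-2}{2}+2\beta=1$ and $\K_0^1(\eta_1,\eta_2)=\phi_s(|\eta_1|)\phi_s(|\eta_2|)-\eta_1\cdot\eta_2$. After inserting \eqref{ineq:kernelest non-wave} into \eqref{ineq:main} with $f=g$, the relevant integral becomes $\int |\widehat f(\eta_1)|^2|\widehat f(\eta_2)|^2 \phi_s(|\eta_1|)\phi_s(|\eta_2|)\big(\phi_s(|\eta_1|)\phi_s(|\eta_2|)-\eta_1\cdot\eta_2\big)\,\d\eta_1\d\eta_2$. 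Expanding, the term coming from $\phi_s(|\eta_1|)\phi_s(|\eta_2|)$ gives $\|\phi_s(D)f\|_{L^2}^4$, while the term $-\eta_1\cdot\eta_2$ gives $-\big|\int \eta\,|\widehat f(\eta)|^2\phi_s(|\eta|)\,\d\eta\big|^2\le0$; discarding it would already give the refined estimate, but to get the sharper form \eqref{ineq:Strichartz refined non-wave} with the explicit $-s^2\|\phi_s(D)^{1/2}f\|_{L^2}^4$ correction I would instead use the elementary pointwise inequality $\phi_s(|\eta_1|)\phi_s(|\eta_2|)-\eta_1\cdot\eta_2\le \phi_s(|\eta_1|)\phi_s(|\eta_2|)$ only after first writing $\phi_s(|\eta_1|)\phi_s(|\eta_2|)-\eta_1\cdot\eta_2=\big(\phi_s(|\eta_1|)\phi_s(|\eta_2|)-\eta_1\cdot\eta_2-s^2\big)+s^2$ and bounding the first bracket by $2\phi_s(|\eta_1|)\phi_s(|\eta_2|)$ minus a manifestly nonnegative quantity; tracking which combination produces exactly $\|\phi_s(D)f\|_{L^2}^4-s^2\|\phi_s(D)^{1/2}f\|_{L^2}^4$ is the one genuinely delicate algebraic point. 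The constant is then $\big(2^{-1/2}s^{-1}\KG(\tfrac{4-d}{4},d)\big)^{1/2}$, and evaluating $\KG$ at $\beta=\frac{4-d}{4}$ should give $\big(2^{-d+1}\pi^{(2-d)/2}/(s\,\Gamma(\tfrac{d+2}{2}))\big)^{1/2}$ as claimed.

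Optimality of the constants will follow by exhibiting a sequence $f_n$ along which both sides of \eqref{ineq:corollaries} asymptotically saturate. The natural choice is to concentrate $\widehat{f_n}$ near the origin in frequency (so that $\eta_1\cdot\eta_2$ and $s^2$ become negligible compared with $\phi_s(|\eta_i|)^2\approx s^2$ in the \emph{relative} sense needed to make \eqref{ineq:keyest non-wave}, hence \eqref{ineq:kernelest non-wave}, asymptotically sharp, since $\phi_s(|\eta_1|)\phi_s(|\eta_2|)-\eta_1\cdot\eta_2\to s^2$); one must then check that the optimal constant $\KG(\beta,d)$ in Theorem \ref{thm:main} is itself attained in the limit by the corresponding extremising sequence (or extremiser) for \eqref{ineq:main}, so that no loss is incurred at that stage either. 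Absence of extremisers is then a rigidity statement: equality in \eqref{ineq:kernelest non-wave} at a.e.\ pair $(\eta_1,\eta_2)$ in the support forces $\phi_s(|\eta_1|)\phi_s(|\eta_2|)-\eta_1\cdot\eta_2\equiv s^2$, which (since $\phi_s(|\eta_1|)\phi_s(|\eta_2|)-\eta_1\cdot\eta_2-s^2$ vanishes only when $\eta_1=\eta_2=0$, as $\phi_s$ is strictly convex) is incompatible with $f\not\equiv0$; for the refined estimate the same conclusion is reinforced by the discarded nonnegative term. I expect the main obstacle to be not any single inequality but the bookkeeping: verifying that the two reductions—from Theorem \ref{thm:main}'s sharp constant and from the kernel bound \eqref{ineq:kernelest non-wave}—can be made simultaneously asymptotically sharp by one and the same sequence, and confirming the gamma-function identities that collapse $\KG(\beta,d)$ at these two special values of $\beta$ into the clean closed forms stated.
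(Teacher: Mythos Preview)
Your approach is essentially the same as the paper's, and the overall strategy is correct. Two slips are worth flagging.

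First, for $(\alpha,\beta)=(1,\tfrac{4-d}{4})$ you miscompute the kernel: by definition $\K_0^1(\eta_1,\eta_2)=\phi_s(|\eta_1|)\phi_s(|\eta_2|)-\eta_1\cdot\eta_2-s^2$, not $\phi_s(|\eta_1|)\phi_s(|\eta_2|)-\eta_1\cdot\eta_2$. With the correct expression the ``genuinely delicate algebraic point'' you anticipate disappears entirely: the $-s^2$ term directly produces $-s^2\|\phi_s(D)^{1/2}f\|_{L^2}^4$, the $\phi_s(|\eta_1|)\phi_s(|\eta_2|)$ term gives $\|\phi_s(D)f\|_{L^2}^4$, and the $-\eta_1\cdot\eta_2$ term integrates to $-\bigl|\int\eta\,|\widehat f(\eta)|^2\phi_s(|\eta|)\,\d\eta\bigr|^2\le0$, which one simply drops. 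This is exactly how the paper obtains \eqref{ineq:Strichartz refined non-wave}.

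Second, your equality analysis for \eqref{ineq:keyest non-wave} is slightly off: equality holds on the whole diagonal $\{\eta_1=\eta_2\}$, not only at the origin (expand $(\phi_s(|\eta_1|)\phi_s(|\eta_2|))^2-(s^2+\eta_1\cdot\eta_2)^2=s^2|\eta_1-\eta_2|^2+|\eta_1|^2|\eta_2|^2-(\eta_1\cdot\eta_2)^2$). The non-existence argument still goes through because the diagonal is a null set in $\mathbb R^{2d}$, which is precisely the paper's reasoning. For optimality, the paper uses the explicit family $f_a$ from \eqref{extremisers} and sends $a\to\infty$; these are exact extremisers for \eqref{ineq:main}, so only the kernel bound \eqref{ineq:kernelest non-wave} needs to be saturated in the limit, matching your intuition about frequency concentration near the origin.
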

One may note that \eqref{ineq:Strichartz refined non-wave} provides a sharp form of the following refined Strichartz inequality in the analogous manner of \eqref{ineq:Strichartz refined J} when $d=4$:
\begin{equation}\label{i:new}
\|e^{it\phi_1(D)}f\|_{L^4(\mathbb{R}^{4+1})}
\le
(16\pi)^{-\frac14}
(
\|f\|_{H^1(\mathbb R^4)}^4
-
\|f\|_{H^\frac12(\mathbb R^4)}^4
)^\frac14,
\end{equation}
however we are unable to conclude whether the constant $(16\pi)^{-\frac14}$ continues to be optimal if we drop the second term on the right-hand side, which is discussed in Section \ref{s:sharpness non-wave}. 
\\

For solutions $u$ of certain PDE, in addition to the null-form estimates \eqref{ineq:null +-}, estimates which control quantities like $|u|^2$ through its interplay with other types of operators have appeared numerous times in the literature. In particular, we note that the approach taken by Beltran--Vega \cite{BV19}, which in turn built on work of Planchon--Vega \cite{PV09}, rested on interplay with geometric operators such as the Radon transform or, more generally, the $k$-plane transform. For related work in this context of interaction with geometrically-defined operators, we also refer the reader to work of Bennett et. al \cite{BBFGI18} and Bennett--Nakamura \cite{BN20}.

Our approach to proving Theorem \ref{thm:main} more closely follows the argument in \cite{BJO16} and does not appear to fit into such a geometric perspective.\\

\subsection*{Summary of results}
Theorem \ref{thm:main}, a natural generalization of \eqref{ineq:biest sharp BJO} in the context of the Klein--Gordon, reproduces several known Strichartz-type inequalities with the sharp constant. The following is the summary of our results and remaining open problems.
\begin{itemize}
\item Corollary \ref{cor:wave} recovers \eqref{ineq:corollaries} when $(\beta,d)=(0,3)$ due to Quilodr\'an \cite{Qu15}.
\item Corollary \ref{cor:wave} recovers \eqref{ineq:corollaries} when $(\beta,d)=(0,5)$ due to Jeavons \cite{Jv14}.
\item Corollary \ref{cor:wave} recovers \eqref{ineq:Strichartz refined J} when $(\beta,d)=(0,5)$ with $\alpha=\frac34$ due to Jeavons \cite{Jv14}.
\item[$\circ$] For $(\beta,d)=(0,4)$, it remains open whether \eqref{ineq:corollaries} holds with $C=\F(0,4)^\frac12$ as the sharp constant.
\item Corollary \ref{cor:non-wave} recovers \eqref{ineq:corollaries} when $(\beta,d)=(0,2)$ due to Quilodr\'an \cite{Qu15}.
\item Corollary \ref{cor:non-wave} yields \eqref{i:new}, an analogous refined Strichartz inequality of \eqref{ineq:Strichartz refined J}, in the case $(\beta,d)=(0,4)$.
\item [$\circ$] Corollary \ref{cor:non-wave} recovers \eqref{ineq:corollaries} when $(\beta,d)=(0,4)$ with the constant $(16\pi)^{-\frac14}$, but we do not know whether the constant is sharp.
\end{itemize}

\subsection*{Notation/Useful formulae}
Throughout the paper, we denote $A\gtrsim B$ if $A\ge CB$, $A\lesssim B$ if $A\le CB$ and $A\sim B$ if $C^{-1}B\le A \le CB$ for some constant $C>0$.  The gamma function and the beta function are defined by
\[
\Gamma(z):=\int_0^\infty x^{z-1}e^{-x}\,\d x\quad \text{and} \quad B(z,w):=\int_0^1 \lambda^{z-1}(1-\lambda)^{w-1}\,\d\lambda,\]
respectively, for $z$, $w\in\mathbb C$ satisfying $\Re(z)$, $\Re(w)>0$. 
Regarding those, we use the following well-known formulae multiple times;
\begin{equation}\label{eq:volume of d-sphere}
|\mathbb{S}^{d-1}|=\frac{2\pi^{\frac{d}{2}}}{\Gamma(\frac{d}{2})},
\end{equation}
and
\[
B(z,w)=\frac{\Gamma(z)\Gamma(w)}{\Gamma(z+w)}.
\]
Also, it is worth to note here that the inverse Fourier transform of an appropriate function $g$ on $\mathbb R^d$ is given by $g^\vee(x)=(2\pi)^{-d}\int_{\mathbb R^d}e^{ix\cdot\xi}g(\xi)\,\d\xi$ so that the following hold:
\begin{itemize}
\item $\widehat{fg}(\xi)=(2\pi)^{-d}\widehat{f}*\widehat{g}(\xi),\quad \xi\in\mathbb R^d.$
\item $\|f\|_{L^2(\mathbb R^d)}^2=(2\pi)^{-d}\|\widehat{f}\|_{L^2(\mathbb R^d)}^2$ (Plancherel's theorem).
\item $\|\phi_s(D)^\alpha f\|_{L^2(\mathbb R^d)}
=
(2\pi)^{-d}\left(\int_{\mathbb R^d}\phi_s(|\xi|)^{2\alpha}|\widehat{f}(\xi)|^2\,\d\xi\right)^\frac12$.
\end{itemize}

\subsection*{Structure of the paper}
\begin{itemize}
\item Section \ref{sec:proof of Theorem main}: We first prove  Theorem \ref{thm:main} by adapting the argument of \cite{BJO16}.

\item Section \ref{sec:some observations wave regime}:  We prove \eqref{ineq:biest D_-D_+ } for radial data and then show that, specifically for $\beta\in[\frac{2-d}{4},\frac{3-d}{4}]\cup[\frac{5-d}{4},\infty)$, the estimate \eqref{ineq:biest D_-D_+ } with $C=\F(\beta,d)^\frac12$ holds. We also make an observation that suggests it may be difficult to obtain the optimal constant in \eqref{ineq:biest D_-D_+ } for $\beta\in(\frac{3-d}{4},\frac{5-d}{4})$ even for radially symmetric data (see Proposition \ref{prop:a counter example}).  At the end of this section, we show $\beta\geq\frac{3-d}{4}$ is necessary for \eqref{ineq:biest D_-D_+ } to hold for general data.

\item Section \ref{sec:sharpness of constants}: The aim of this section is to complete the proof of the corollaries. We first introduce how to deduce the refined form of the Strichartz estimate, and then focus on the sharpness of the constants in Corollary \ref{cor:biest radial}, Corollary \ref{cor:wave} and Corollary \ref{cor:non-wave}. They all are proved by the same method, but it differs from that in \cite{Jv14} or \cite{BJO16}, as here we need to deal with the more delicate situation of the non-wave regime. 
The non-existence of extremisers is also discussed. 

\item Section \ref{sec:analogous results ++}: We end the paper with Section \ref{sec:analogous results ++} by discussing analogous results for the $(++)$ case. As \cite{BJO16} has already observed, the $(++)$ case is far easier than the $(+-)$ case, and this will become clear from our argument in this section. We employ the null-form $|\square-(2s)^2|$ instead of $|\square|$ in order to follow the ideas of the proof of Theorem \ref{thm:main} and obtain an analogous bilinear estimate (Theorem \ref{thm:main ++}). 
\end{itemize}


\subsection*{Acknowledgment}
The first author was supported by JSPS Postdoctoral Research Fellowship (No. 18F18020),  and the second author was supported by JSPS KAKENHI Grant-in-Aid for JSPS Fellows (No.  20J11851).  Authors express their sincere gratitude to Neal Bez, second author's adviser, for introducing the problem, sharing his immense knowledge and continuous support. They also wish to thank the anonymous referee for a very careful reading of the manuscript and many valuable suggestions and comments.

\section{Proof of Theorem \ref{thm:main}}\label{sec:proof of Theorem main}

Although some steps require additional care due to the extra parameter $s$, broadly speaking Theorem \ref{thm:main} can be proved by adapting the argument for wave propagators presented in \cite{BJO16}, whose techniques originated in \cite{BBI15} (see also \cite{BBJP17}). The key tool here is the following Lorentz transform given by $L$; for fixed $(\tau,\xi)\in\mathbb R\times \mathbb R^d$ such that $\tau>|\xi|$, 
\[
L{t\choose x}={\gamma(t-\zeta\cdot x)\choose x+(\frac{\gamma-1}{|\zeta|^2}\zeta\cdot x-\gamma t)\zeta},\quad (t,x)\in\mathbb{R}\times\mathbb{R}^d,
\]
where $\zeta:=-\frac{\xi}{\tau}$ and $\gamma:=\frac{\tau}{(\tau^2-|\xi|^2)^{\frac12}}$. It is well known that the measure $\frac{\delta(\sigma-\phi_s(|\eta|))}{\phi_s(|\eta|)}$ for $(\sigma,\eta)\in\mathbb R\times\mathbb R^d$ is invariant under the Lorentz transform $L$, $|\det L|=1$, and
\begin{equation}\label{Lorentz}
L{(\tau^2-|\xi|^2)^{\frac12}\choose 0}={\tau\choose \xi}.
\end{equation}

Let us first introduce two lemmas whose proof come later in this section.

\begin{lemma}\label{lem:reformation of J}
For $\eta_1$, $\eta_2\in\mathbb{R}^{d}$ and $\beta>\frac{1-d}{4}$, define 
\begin{align}\label{eq:reformation of J}
J^{2\beta}(\eta_1,\eta_2)
:=
\int_{\mathbb{R}^{2d}}\frac{|\phi_s(|\eta_1|)\phi_s(|\eta_4|)-\eta_1\cdot\eta_4-s^2|^{2\beta}}{\phi_s(|\eta_3|)\phi_s(|\eta_4|)}\delta {\tau-\phi_s(|\eta_3|)-\phi_s(|\eta_4|)\choose \xi-\eta_3-\eta_4}\,\d\eta_3\d\eta_4,
\end{align}
where $\tau=\phi_s(|\eta_1|)+\phi_s(|\eta_2|)$ and $\xi=\eta_1+\eta_2$.
Then, we have
\[
J^{2\beta}(\eta_1,\eta_2)=(2\pi)^{\frac{d-1}{2}}\frac{\Gamma(\tfrac{d-1}{2}+2\beta)}{\Gamma(d-1+2\beta)}
\K_\frac12^{\frac{d-2}{2}+2\beta}(\eta_1,\eta_2).
\]
\end{lemma}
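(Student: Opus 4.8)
\emph{Proof proposal.} The plan is to recognise $J^{2\beta}(\eta_1,\eta_2)$ as a weighted convolution, over the forward hyperboloid $\{(\phi_s(|\eta|),\eta):\eta\in\mathbb R^d\}$, of the invariant measure $\frac{\delta(\sigma-\phi_s(|\eta|))}{\phi_s(|\eta|)}\,\d\sigma\,\d\eta$ with itself, against the weight $|\phi_s(|\eta_1|)\phi_s(|\eta_4|)-\eta_1\cdot\eta_4-s^2|^{2\beta}$, and then to exploit Lorentz invariance to pass to the ``rest frame''. Two observations make this possible: first, the quantity $\phi_s(|\eta_1|)\phi_s(|\eta_4|)-\eta_1\cdot\eta_4$ is the Minkowski pairing of the hyperboloid points $(\phi_s(|\eta_1|),\eta_1)$ and $(\phi_s(|\eta_4|),\eta_4)$, hence is preserved by $L$; second, by \eqref{ineq:keyest non-wave} one has $\tau^2-|\xi|^2=2s^2+2\big(\phi_s(|\eta_1|)\phi_s(|\eta_2|)-\eta_1\cdot\eta_2\big)\ge 4s^2>0$, so the Lorentz transform $L$ attached to $(\tau,\xi)$ through \eqref{Lorentz} is at our disposal. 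Changing variables by $L$ in both copies of the invariant measure in \eqref{eq:reformation of J} (using $|\det L|=1$, invariance of the measure, and invariance of the Minkowski pairing) reduces the computation to the case in which $(\tau,\xi)$ is replaced by $(\tau',0)$, where $\tau':=(\tau^2-|\xi|^2)^{1/2}$, and $\eta_1$ is replaced by the spatial part $\eta_1'$ of $L^{-1}(\phi_s(|\eta_1|),\eta_1)$.

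Next I would carry out the resulting symmetric computation. Since $L^{-1}(\phi_s(|\eta_1|),\eta_1)$ and $L^{-1}(\phi_s(|\eta_2|),\eta_2)$ lie on the hyperboloid and sum to $(\tau',0)$, their spatial parts are $\pm\eta_1'$ and their time parts are equal, forcing $\phi_s(|\eta_1'|)=\tau'/2$, i.e.\ $|\eta_1'|=\rho$ with $\rho^2=\tau'^2/4-s^2$; the delta constraints in the reduced integral likewise force $\eta_4=-\eta_3$ and $\phi_s(|\eta_3|)=\tau'/2$, so $|\eta_3|=\rho$ as well. Integrating out the $(d+1)$-dimensional delta against one copy of the measure (which sets $\eta_4=-\eta_3$ and contributes a factor $\phi_s(|\eta_3|)^{-1}$), passing to polar coordinates $\eta_3=r\omega$, and using $\delta(\tau'-2\phi_s(r))=\frac{\tau'}{4\rho}\,\delta(r-\rho)$ collapses everything to a single spherical integral. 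On the support the weight becomes $\rho^{4\beta}(1+\omega_1'\cdot\omega)^{2\beta}$, with $\eta_1'=\rho\omega_1'$, which is nonnegative so the absolute value disappears, leaving
\[
J^{2\beta}(\eta_1,\eta_2)=\frac{\rho^{4\beta+d-2}}{\tau'}\int_{\mathbb S^{d-1}}(1+\omega_1'\cdot\omega)^{2\beta}\,\d\sigma(\omega).
\]

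Finally I would evaluate the spherical integral. Parametrising by the polar angle to $\omega_1'$ gives $\int_{\mathbb S^{d-1}}(1+\omega_1'\cdot\omega)^{2\beta}\,\d\sigma(\omega)=|\mathbb S^{d-2}|\int_{-1}^1(1+t)^{2\beta}(1-t^2)^{\frac{d-3}{2}}\,\d t$, and the substitution $t=2u-1$ turns this into $2^{2\beta+d-2}|\mathbb S^{d-2}|\,B\big(\tfrac{d-1}{2}+2\beta,\tfrac{d-1}{2}\big)$, the integral converging exactly when $\tfrac{d-1}{2}+2\beta>0$, i.e.\ $\beta>\tfrac{1-d}{4}$ — precisely the hypothesis. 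Inserting \eqref{eq:volume of d-sphere}, the beta–gamma identity, and then the relations $\tau'^2=2\big(\phi_s(|\eta_1|)\phi_s(|\eta_2|)-\eta_1\cdot\eta_2+s^2\big)$ and $\rho^2=\tfrac12\big(\phi_s(|\eta_1|)\phi_s(|\eta_2|)-\eta_1\cdot\eta_2-s^2\big)$, the powers of $2$ collect to $2^{\frac{d-1}{2}}$ and the rest assembles into $(2\pi)^{\frac{d-1}{2}}\frac{\Gamma(\frac{d-1}{2}+2\beta)}{\Gamma(d-1+2\beta)}\K_\frac12^{\frac{d-2}{2}+2\beta}(\eta_1,\eta_2)$, as claimed.

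The main obstacle is bookkeeping rather than conceptual: carefully composing the delta functions and Jacobians when transporting the $L$-change of variables through both copies of the invariant measure, and correctly pinning down $|\eta_1'|=\rho$. The parameter $s$ — absent in the wave case of \cite{BJO16} — enters only through the shifts $\pm s^2$ and the two relations $\tau'^2=2(\cdots+s^2)$, $\rho^2=\tfrac12(\cdots-s^2)$, and this is exactly where the two distinct exponents $a=\tfrac12$ and $b=\tfrac{d-2}{2}+2\beta$ of $\K_a^b$ come from.
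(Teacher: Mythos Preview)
Your argument is correct and follows the same overall strategy as the paper: boost by $L$ to the rest frame $(\tau',0)$ using invariance of the measure $\frac{\delta(\sigma-\phi_s(|\eta|))}{\phi_s(|\eta|)}$, collapse the delta constraints to a single sphere of radius $\rho$, and evaluate the resulting spherical integral via the beta function. The one genuine difference lies in how the weight is handled. The paper leaves $(\phi_s(|\eta_1|),\eta_1)$ untouched and computes $\binom{\phi_s(|\eta_1|)}{-\eta_1}\cdot L\binom{\phi_s(|\eta|)}{\eta}-s^2$ head-on, via a separate auxiliary result (Lemma~\ref{lem:spherical rearrangement}) whose proof is a direct algebraic verification that this quantity equals $|\eta|^2(1+\tfrac{\eta}{|\eta|}\cdot\omega_*)$ for an explicitly constructed unit vector $\omega_*$. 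You instead use the Lorentz invariance of the Minkowski pairing to move $L$ onto the $\eta_1$-side, producing $\eta_1'$; the constraint $L^{-1}(\tau,\xi)=(\tau',0)$ then forces $|\eta_1'|=\rho$ for free, and the weight reduces to $\rho^2(1\pm\omega_1'\cdot\omega)$ with no further work. This is a cleaner route that dispenses with Lemma~\ref{lem:spherical rearrangement} altogether; the paper's approach has the minor advantage of exhibiting $\omega_*$ explicitly in terms of $\eta_1,\eta_2$, but for the purposes of evaluating $J^{2\beta}$ that information is never used.
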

\begin{lemma}\label{lem:spherical rearrangement}
Let $\eta_1$, $\eta_2 \in\mathbb{R}^d$. Set 
\[
\xi=\eta_1+\eta_2,\qquad\tau=\phi_s(|\eta_1|)+\phi_s(|\eta_2|)
\]
and $\eta\in\mathbb{R}^d$ satisfying 
\[
2\phi_s(|\eta|)=(\tau^2-|\xi|^2)^{\frac12}.
\]
Then, 
there exists $\omega_*\in\mathbb{S}^{d-1}$ depending only on $\eta_1$, $\eta_2$  and $|\eta|$ such that 
\begin{align*}
{\phi_s(|\eta_1|)\choose-\eta_1}\cdot L{\phi_s(|\eta|)\choose\eta}-s^2&=|\eta|^2\left(1+\frac{\eta}{|\eta|}\cdot\omega_*\right).
\end{align*}
\end{lemma}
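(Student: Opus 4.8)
The plan is to use only two structural properties of the Lorentz map $L$: it preserves the Minkowski bilinear form $\langle v,w\rangle:=v_0w_0-v'\cdot w'$ on $\mathbb R^{1+d}$ (writing $v=\binom{v_0}{v'}$, $w=\binom{w_0}{w'}$ with $v_0,w_0\in\mathbb R$ and $v',w'\in\mathbb R^d$), and it satisfies $L\binom{(\tau^2-|\xi|^2)^{1/2}}{0}=\binom{\tau}{\xi}$, i.e. \eqref{Lorentz}. First I would check that everything is well posed: by \eqref{ineq:keyest non-wave},
\[
\tau^2-|\xi|^2=2s^2+2\bigl(\phi_s(|\eta_1|)\phi_s(|\eta_2|)-\eta_1\cdot\eta_2\bigr)\ge4s^2>0,
\]
so $\tau>|\xi|$ and $L$ is defined, and moreover $|\eta|^2=\phi_s(|\eta|)^2-s^2=\tfrac12\bigl(\phi_s(|\eta_1|)\phi_s(|\eta_2|)-\eta_1\cdot\eta_2-s^2\bigr)$, a function of $(\eta_1,\eta_2)$ alone. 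Since the pairing on the left-hand side of the claim is the standard inner product of $\mathbb R^{1+d}$, so that $\binom{\phi_s(|\eta_1|)}{-\eta_1}\cdot w=\langle\binom{\phi_s(|\eta_1|)}{\eta_1},w\rangle$ for every $w\in\mathbb R^{1+d}$, invariance of $\langle\cdot,\cdot\rangle$ under $L$ gives
\[
\binom{\phi_s(|\eta_1|)}{-\eta_1}\cdot L\binom{\phi_s(|\eta|)}{\eta}=\Bigl\langle L^{-1}\binom{\phi_s(|\eta_1|)}{\eta_1},\,\binom{\phi_s(|\eta|)}{\eta}\Bigr\rangle,
\]
so the whole lemma reduces to identifying the vector $L^{-1}\binom{\phi_s(|\eta_1|)}{\eta_1}$.

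Write $L^{-1}\binom{\phi_s(|\eta_1|)}{\eta_1}=\binom{a}{b}$ with $a\in\mathbb R$, $b\in\mathbb R^d$. Invariance of $\langle\cdot,\cdot\rangle$ forces $a^2-|b|^2=\phi_s(|\eta_1|)^2-|\eta_1|^2=s^2$. Applying the (linear) map $L^{-1}$ to \eqref{Lorentz}, together with $\binom{\tau}{\xi}=\binom{\phi_s(|\eta_1|)}{\eta_1}+\binom{\phi_s(|\eta_2|)}{\eta_2}$, gives $L^{-1}\binom{\phi_s(|\eta_2|)}{\eta_2}=\binom{(\tau^2-|\xi|^2)^{1/2}-a}{-b}$, whose Minkowski norm is likewise $\phi_s(|\eta_2|)^2-|\eta_2|^2=s^2$; subtracting the two norm identities yields $\bigl((\tau^2-|\xi|^2)^{1/2}-a\bigr)^2=a^2$, and since $(\tau^2-|\xi|^2)^{1/2}>0$ this forces $a=\tfrac12(\tau^2-|\xi|^2)^{1/2}=\phi_s(|\eta|)$ and hence $|b|^2=a^2-s^2=|\eta|^2$. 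Thus $b=-|\eta|\,\omega_*$ for some $\omega_*\in\mathbb S^{d-1}$, and because $L$ and $\binom{\phi_s(|\eta_1|)}{\eta_1}$ depend only on $(\eta_1,\eta_2)$, so does $\omega_*$ (in particular it depends only on $\eta_1,\eta_2,|\eta|$). If $\eta_1=\eta_2$ then $|\eta|=0$, $\eta=0$, and both sides of the claimed identity vanish, so there $\omega_*$ may be taken arbitrarily.

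It then remains only to substitute $L^{-1}\binom{\phi_s(|\eta_1|)}{\eta_1}=\binom{\phi_s(|\eta|)}{-|\eta|\omega_*}$ into the displayed identity:
\[
\Bigl\langle L^{-1}\binom{\phi_s(|\eta_1|)}{\eta_1},\,\binom{\phi_s(|\eta|)}{\eta}\Bigr\rangle=\phi_s(|\eta|)^2+|\eta|\,\omega_*\cdot\eta=\phi_s(|\eta|)^2+|\eta|^2\,\tfrac{\eta}{|\eta|}\cdot\omega_*,
\]
and subtracting $s^2$ and using $\phi_s(|\eta|)^2-s^2=|\eta|^2$ produces exactly $|\eta|^2\bigl(1+\tfrac{\eta}{|\eta|}\cdot\omega_*\bigr)$, as claimed. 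The only genuinely delicate step is the middle paragraph — extracting, from the two quadratic Minkowski-norm relations and the additivity of $L^{-1}$, that the time component of $L^{-1}\binom{\phi_s(|\eta_1|)}{\eta_1}$ is exactly $\phi_s(|\eta|)$ and its spatial part has length exactly $|\eta|$; everything else is bookkeeping. Note that the explicit matrix form of $L$ is never used, only the two structural properties above, so the degenerate case $\xi=0$ (where one simply takes $L$ to be the identity) needs no separate treatment.
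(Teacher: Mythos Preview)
Your proof is correct and takes a genuinely different route from the paper's. The paper computes $L\binom{\phi_s(|\eta|)}{\eta}$ explicitly from the matrix formula for $L$, takes the Euclidean dot product with $\binom{\phi_s(|\eta_1|)}{-\eta_1}$, and obtains $\phi_s(|\eta|)^2\bigl(1+\tfrac{\eta}{|\eta|}\cdot|\eta|z\bigr)$ for an explicit vector $z=z(\eta_1,\eta_2,|\eta|)$; it then verifies by a page-long algebraic expansion (using the relation $2\phi_s(|\eta|)^2=\phi_s(|\eta_1|)\phi_s(|\eta_2|)-\eta_1\cdot\eta_2+s^2$) that $|z|=|\eta|/\phi_s(|\eta|)^2$, whence $\omega_*:=z/|z|$ does the job. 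Your argument replaces this computation entirely: you move $L$ across the Minkowski pairing to $L^{-1}\binom{\phi_s(|\eta_1|)}{\eta_1}$, and then the two constraints---that this vector and its complement $L^{-1}\binom{\phi_s(|\eta_2|)}{\eta_2}$ both lie on the mass-$s$ hyperboloid and sum to $\binom{(\tau^2-|\xi|^2)^{1/2}}{0}$---pin down the time component as $\phi_s(|\eta|)$ and the spatial radius as $|\eta|$ with essentially no calculation. What you gain is brevity and conceptual clarity (the explicit form of $L$ is never touched); what the paper's approach gains is an explicit closed formula for $\omega_*$, though that formula is not used elsewhere in the paper, since in Lemma~\ref{lem:reformation of J} one immediately integrates $\theta$ over the whole sphere and $\omega_*$ drops out by rotation invariance.
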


\begin{proof}[Proof of Theorem \ref{thm:main}]
Let $u(t,x)=e^{it\phi_s(D)}f(x)$ and $v(t,x)=e^{it\phi_s(D)}g(x)$. By the expressions $\widetilde{u}(\tau,\xi)
=
2\pi\delta(\tau-\phi_s(|\xi|))\widehat{f}(\xi)$ and $\widetilde{\overline{v}}(\tau,\xi)
=
2\pi\delta(\tau+\phi_s(|\xi|))\overline{\widehat{g}}(-\xi)$, Plancherel's theorem, and appropriately relabeling the variables, one can deduce
\begin{align*}
&
(2\pi)^{3d-1}
\||\Box|^\beta (u\overline{v})\|_{L^2(\mathbb{R}^{d+1})}^2\\
&\quad=
(2\pi)^{-4}
\int_{\mathbb{R}^{d+1}}|\tau^2-|\xi|^2|^{2\beta}|\widetilde{u}*\widetilde{\overline{v}}(\xi,\tau)|^2\,\d\tau\d\xi\\
&\quad=
\int_{\mathbb R^{4d}}\int_{\mathbb R^{d+1}}
|\tau^2-|\xi|^2|^{2\beta}\widehat{f}(\eta_1)\overline{\widehat{g}(-\eta_2)}
\overline{\widehat{f}(\eta_3)}\widehat{g}(-\eta_4)\\
&\quad\qquad\times
\delta{\tau-\phi_s(|\eta_1|)+\phi_s(|\eta_2|)\choose \xi-\eta_1-\eta_2}
\delta{\tau-\phi_s(|\eta_3|)+\phi_s(|\eta_4|)\choose \xi-\eta_3-\eta_4}\,\d\tau\d\xi\d\eta_1\d\eta_2\d\eta_3\d\eta_4\\
&\quad=
2^{2\beta}
\int_{\mathbb{R}^{4d}}|\phi_s(|\eta_1|)\phi_s(|\eta_4|)-\eta_1\cdot\eta_4-s^2|^{2\beta}
\frac{F(\eta_1,\eta_2)\overline{F(\eta_3,\eta_4)}}{(\phi_s(|\eta_1|)\phi_s(|\eta_2|)\phi_s(|\eta_3|)\phi_s(|\eta_4|))^{\frac12}}\\
&\quad\qquad\times
\delta {\phi_s(|\eta_1|)+\phi_s(|\eta_2|)-\phi_s(|\eta_3|)-\phi_s(|\eta_4|)\choose\eta_1+\eta_2-\eta_3-\eta_4}
\,\d\eta_1\d\eta_2\d\eta_3\d\eta_4.
\end{align*}
Here, the change of variables; $(\eta_2,\eta_4)\mapsto(-\eta_4,-\eta_2)$ has been performed in the last step and 
\[
F(\eta_1,\eta_2):=\widehat f(\eta_1)\widehat g(\eta_2)\phi_s(|\eta_1|)^\frac12\phi_s(|\eta_2|)^\frac12.
\]
If we define $\Psi=\Psi_s(\eta_1,\eta_2,\eta_3,\eta_4)=\left(\frac{\phi_s(|\eta_1|)\phi_s(|\eta_2|)}{\phi_s(|\eta_3|)\phi_s(|\eta_4|)}\right)^\frac12$, then by the arithmetic-geometric mean we have
\[
|F(\eta_1,\eta_2)F(\eta_3,\eta_4)|
\le\frac12\left(|F(\eta_1,\eta_2)|^2\Psi+|F(\eta_3,\eta_4)|^2\Psi^{-1}\right)
\]
so that
\begin{equation}\label{ineq:a/g mean}
\frac{|F(\eta_1,\eta_2)F(\eta_3,\eta_4)|}{(\phi_s(|\eta_1|)\phi_s(|\eta_2|)\phi_s(|\eta_3|)\phi_s(|\eta_4|))^{\frac12}}\le\frac12\left(\frac{|F(\eta_1,\eta_2)|^2}{\phi_s(|\eta_3|)\phi_s(|\eta_4|)}+\frac{|F(\eta_3,\eta_4)|^2}{\phi_s(|\eta_1|)\phi_s(|\eta_2|)}\right).
\end{equation}
The equality holds if and only if 
\[
\phi_s(|\eta_1|)\phi_s(|\eta_2|)\widehat{f}(\eta_1)\widehat{g}(\eta_2)
=
\phi_s(|\eta_3|)\phi_s(|\eta_4|)\widehat{f}(\eta_3)\widehat{g}(\eta_4)
\]
almost everywhere on the support of the delta measure, which is satisfied by, for instance, $f=g=f_a$ with $a>0$ that is given by
\begin{equation}\label{extremisers}
\widehat{f_a}(\xi)
=
\frac{e^{-a\phi_s(|\xi|)}}{\phi_s(|\xi|)}.
\end{equation}

Therefore, 
\begin{align*}
&\left(
(2\pi)^{-3d+1}2^{2\beta}
\right)^{-1}
\||\square|^\beta(u\overline{v})\|_{L^2(\mathbb R^{d+1})}^2\\
&\quad\leq
\frac12
\left[
\int_{\mathbb R^{2d}}F(\eta_1,\eta_2)J^{2\beta}(\eta_1,\eta_2)\,\d\eta_1\d\eta_2+\int_{\mathbb R^{2d}}F(\eta_2,\eta_1)J^{2\beta}(\eta_1,\eta_2)\,\d\eta_1\d\eta_2
\right],
\end{align*}
which implies \eqref{ineq:main} by applying Lemma \ref{lem:reformation of J}. One may note that the constant in \eqref{ineq:main} is sharp since we only apply the inequality \eqref{ineq:a/g mean} in the proof.
\end{proof}

We now prove the aforementioned lemmas.
\begin{proof}[Proof of Lemma \ref{lem:reformation of J}]
Let $\tau=\phi_s(|\eta_1|)+\phi_s(|\eta_2|)$ and $\xi=\eta_1+\eta_2$.  Recall the Lorentz transform $L$. 
The change of variables ${\sigma_j\choose\eta_j}\mapsto L{\sigma_j\choose\eta_j}$ for $j=3,4$ gives
\begin{align*}
J^{2\beta}(\eta_1,\eta_2)
&=
\int_{\mathbb{R}^{2(d+1)}}\left|{\phi_s(|\eta_1|)\choose-\eta_1}\cdot{\sigma_4\choose\eta_4}-s^2\right|^{2\beta}\\
&\qquad\times
\frac{\delta(\sigma_3-\phi_s(|\eta_3|))}{\phi_s(|\eta_3|)}\frac{\delta(\sigma_4-\phi_s(|\eta_4|))}{\phi_s(|\eta_4|)}\delta{\tau-\sigma_3-\sigma_4\choose\xi-\eta_3-\eta_4}\,\d\sigma_3\d\sigma_4\d\eta_3\d\eta_4\nonumber\\
&=
\int_{\mathbb{R}^{2(d+1)}}\left|{\phi_s(|\eta_1|)\choose-\eta_1}\cdot L{\sigma_4\choose\eta_4}-s^2\right|^{2\beta}\\
&\qquad\times
\frac{\delta(\sigma_3-\phi_s(|\eta_3|))}{\phi_s(|\eta_3|)}\frac{\delta(\sigma_4-\phi_s(|\eta_4|))}{\phi_s(|\eta_4|)}\delta{(\tau^2-|\xi|^2)^\frac12-\sigma_3-\sigma_4\choose \eta_3+\eta_4}\,\d\sigma_3\d\sigma_4\d\eta_3\d\eta_4\nonumber\\
&=
\int_{\mathbb{R}^{d}}\left|{\phi_s(|\eta_1|)\choose-\eta_1}\cdot L{\phi_s(|\eta|)\choose\eta}-s^2\right|^{2\beta}
\frac{1}{\phi_s(|\eta|)^2}\delta(2\phi_s(|\eta|)-(\tau^2-|\xi|^2)^{\frac12})\,\d\eta.
\end{align*}
By Lemma \ref{lem:spherical rearrangement} and switching to polar coordinates,
\begin{align*}
J^{2\beta}(\eta_1,\eta_2)
&=
\left(\int_{\mathbb{S}^{d-1}}(1+\theta\cdot\omega_*)^{2\beta}\,\d\sigma(\theta)\right)
\left(
\int_0^\infty
\frac{r^{4\beta}}{\phi_s(r)^2}\delta(2\phi_s(r)-(\tau^2-|\xi|^2)^{\frac12})r^{d-1}\,\d r
\right).
\end{align*}
The first integral can be further simplified as
\begin{align*}
\int_{\mathbb{S}^{d-1}}\left(1+\theta\cdot\omega_*\right)^{2\beta}\,\d\sigma(\theta)
&=
|\mathbb S^{d-2}|\int_{-1}^1(1+\lambda)^{2\beta}(1-\lambda^2)^{\frac{d-3}{2}}\,\d\lambda\\
&=
2^{d-2+2\beta} |\mathbb{S}^{d-2}|B \left(\tfrac{d-1}{2}+2\beta,\tfrac{d-1}{2}\right)
\end{align*}
by using the beta function $B$.
For the remaining radial integration, one can perform the change of variables $2\phi_s(r)\mapsto\nu$ in order to get
\begin{align*}
\int_0^\infty\frac{r^{4\beta}}{\phi_s(r)^2}\delta(2\phi_s(r)-(\tau^2-|\xi|^2)^{\frac12})r^{d-1}\,\d r
&=
2^{-d+2-4\beta}\int_{4s^2}^\infty\frac{(\nu^2-4s^2)^{\frac{d-2}{2}+2\beta}}{\nu}\delta(\nu-(\tau^2-|\xi|^2)^\frac12)\,\d\nu\\
&=
2^{\frac{-d+1}{2}-2\beta}
\K_\frac12^{\frac{d-2}{2}+2\beta}(\eta_1,\eta_2)
\end{align*}
and hence
\begin{align*}
J^{2\beta}(\eta_1,\eta_2)
=
2^{\frac{d-3}{2}} |\mathbb{S}^{d-2}|B (\tfrac{d-1}{2}+2\beta,\tfrac{d-1}{2})
\K_\frac12^{\frac{d-2}{2}+2\beta}(\eta_1,\eta_2).
\end{align*}
Finally, simplifying the constant by the formula
\[
 B (\tfrac{d-1}{2}+2\beta,\tfrac{d-1}{2})=\frac{\Gamma(\tfrac{d-1}{2}+2\beta)\Gamma(\tfrac{d-1}{2})}{\Gamma(d-1+2\beta)},
\]
we are done.
\end{proof}

\begin{proof}[Proof of Lemma \ref{lem:spherical rearrangement}]
Observe first that 
\[
L{\phi_s(|\eta|)\choose\eta}=\frac12{\tau+\frac{\xi\cdot\eta}{\phi_s(|\eta|)}\choose2\eta+\xi(1+\frac{\xi\cdot\eta}{(\tau+2\phi_s(|\eta|))\phi_s(|\eta|)})}.
\]
Then, a direct calculation gives
\begin{align*}
{\phi_s(|\eta_1|)\choose-\eta_1}\cdot L{\phi_s(|\eta|)\choose\eta}=
(\phi_s(|\eta|))^2\left(1+\frac{\eta}{|\eta|}\cdot|\eta|{z}\right),
\end{align*}
where
\[
z
=
\frac{[\phi_s(|\eta|)+\phi_s(|\eta_1|)]\eta_2-[\phi_s(|\eta|)+\phi_s(|\eta_2|)]\eta_1}{\phi_s(|\eta|)^2[\phi_s(|\eta_1|)+\phi_s(|\eta_2|)+2\phi_s(|\eta|)]}.
\]
Since we have the relation $2\phi_s(|\eta|)=\phi_s(|\eta_1|)\phi_s(|\eta_2|)-\eta_1\cdot\eta_2+s^2$, the numerator of $z$ can be simplified by
\begin{align*}
&|[\phi_s(|\eta|)+\phi_s(|\eta_1|)]\eta_2-[\phi_s(|\eta|)+\phi_s(|\eta_2|)]\eta_1|^2\\
&\quad=[\phi_s(|\eta|)+\phi_s(|\eta_1|)]^2|\eta_2|^2+[\phi_s(|\eta|)+\phi_s(|\eta_2|)]^2|\eta_1|^2-2[\phi_s(|\eta|)+\phi_s(|\eta_1|)][\phi_s(|\eta|)+\phi_s(|\eta_2|)]\eta_2\cdot\eta_1 \\
&\quad=[\phi_s(|\eta|)+\phi_s(|\eta_1|)]^2\phi_s(|\eta_2|)^2+[\phi_s(|\eta|)+\phi_s(|\eta_2|)]^2\phi_s(|\eta_1|)^2\\
&\quad\qquad-2[\phi_s(|\eta|)+\phi_s(|\eta_2|)][\phi_s(|\eta|)+\phi_s(|\eta_1|)](\phi_s(|\eta_1|)\phi_s(|\eta_2|)-2\phi_s(|\eta|)^2)\\
&\quad\qquad\qquad-s^2([\phi_s(|\eta|)+\phi_s(|\eta_1|)]^2+[\phi_s(|\eta|)+\phi_s(|\eta_2|)]^2+2[\phi_s(|\eta|)+\phi_s(|\eta_1|)][\phi_s(|\eta|)+\phi_s(|\eta_2|)])\\
&\quad=\left(\phi_s(|\eta|)^2-s^2\right)[\phi_s(|\eta_1|)+\phi_s(|\eta_2|)+2\phi_s(|\eta|)]^2,
\end{align*}
and so it follows that
\[
|z|=\frac{|\eta|}{\phi_s(|\eta|)^2}.
\]
Therefore, 
\begin{align*}
{\phi_s(|\eta_1|)\choose-\eta_1}\cdot L{\phi_s(|\eta|)\choose\eta}-s^2
=|\eta|^2\left(1+\frac{\eta}{|\eta|}\cdot\omega_*\right),
\end{align*}
where we have set $\omega_*=\frac{z}{|z|}$.
\end{proof}


\section{On estimate \eqref{ineq:biest D_-D_+ }}\label{sec:some observations wave regime}
As announced in the introduction, we focus on Corollary \ref{cor:biest radial} by assuming the radial symmetry on initial data. Firstly, we prove the claimed inequality \eqref{ineq:biest D_-D_+ } with the constant $C=\F(\beta,d)$ by applying Theorem \ref{thm:main}. Here, to  deal with the complexity raised in the context of the Klein--Gordon equation, deriving a monotonicity property of the corresponding kernel is useful (Lemma \ref{lem:monotonicity}). Secondly, we show that there is, unfortunately, no way to conclude the same as Corollary \ref{cor:biest radial}, via Theorem \ref{thm:main}, for $\beta$ in the gap $(\frac{3-d}{4},\frac{5-d}{4})$.  Thirdly, we observe a phenomenon that the radial symmetry on the initial data allows the inequality to hold with a wider range of $\beta$, namely, its lower bound from $\beta>\frac{3-d}{4}$ to, at least, $\beta\geq\frac{2-d}{4}$.

\subsection{Estimate \eqref{ineq:biest D_-D_+ } with explicit constant}
Here, we prove \eqref{ineq:biest D_-D_+ } for radially symmetric data $f$ and $g$ for $\beta\geq\frac{2-d}{4}$ and an explicit constant $C<\infty$; for $\beta=[\frac{2-d}{4},\frac{3-d}{4}]\cup[\frac{5-d}{4},\infty)$, this explicit constant coincides with $\F(\beta,d)^\frac12$. In order to complete the proof of Corollary \ref{cor:biest radial}, we need to show the sharpness of $\F(\beta,d)^\frac12$ for $\beta\in[\frac{2-d}{4},\frac{3-d}{4}]\cup[\frac{5-d}{4},\infty)$, and the non-existence of extremisers; for these arguments, we refer the reader to Section \ref{sec:sharpness of constants}.

\begin{lemma}\label{lem:monotonicity}
Let $a+b>-1$, $b>-1$ and $\kappa\in[0,1]$. Define
\[
h^{a,b}(\kappa)
:=
\int_{-1}^1(1-\kappa\lambda)^a(1-\lambda^2)^b\,\d\lambda.
\]
Then,
\[
\sup_{\kappa\in[0,1]}h^{a,b}(\kappa)<\infty.
\]
Moreover, for $a\in(-\infty,0]\cup[1,\infty)$
\[
\sup_{\kappa\in[0,1]}h^{a,b}(\kappa)
=
h^{a,b}(1)
=
2^{a+2b+1}B(a+b+1,b+1).
\]
\end{lemma}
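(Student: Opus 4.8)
The plan is to reduce the finiteness and the identification of the supremum to elementary estimates on the one-dimensional integral $h^{a,b}(\kappa)$. First I would record the value at $\kappa=1$: by the substitution $\lambda\mapsto 1-2\mu$ (equivalently, exploiting $(1-\lambda)^a(1-\lambda^2)^b=(1-\lambda)^{a+b}(1+\lambda)^b$) one obtains
\[
h^{a,b}(1)=\int_{-1}^1(1-\lambda)^{a+b}(1+\lambda)^b\,\d\lambda
=2^{a+2b+1}\int_0^1\mu^{a+b}(1-\mu)^b\,\d\mu
=2^{a+2b+1}B(a+b+1,b+1),
\]
which is finite precisely under the standing hypotheses $a+b>-1$ and $b>-1$; this also shows the claimed closed form, \emph{provided} we can prove $h^{a,b}$ is maximised at $\kappa=1$ on the stated range of $a$.

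For the finiteness statement (valid for all $a$ with $a+b>-1$, $b>-1$), I would split the integral near $\lambda=1$, where the only possible singularity sits when $\kappa$ is close to $1$. On $\lambda\in[0,1]$ we have $1-\kappa\lambda\ge 1-\lambda\ge 0$, so if $a\ge0$ then $(1-\kappa\lambda)^a\le 1$ and $h^{a,b}(\kappa)\le\int_{-1}^1(1-\lambda^2)^b\,\d\lambda<\infty$ uniformly in $\kappa$; while if $a<0$ then $(1-\kappa\lambda)^a\le(1-\lambda)^a$ on $[0,1]$ (and $(1-\kappa\lambda)^a\le 1$ on $[-1,0]$), so $h^{a,b}(\kappa)\le\int_{-1}^1(1-\lambda)^a(1-\lambda^2)^b\,\d\lambda=h^{a,b}(1)<\infty$, again uniformly in $\kappa$. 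Either way $\sup_{\kappa\in[0,1]}h^{a,b}(\kappa)<\infty$.

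For the identification of the supremum when $a\in(-\infty,0]\cup[1,\infty)$, the key is the convexity of $t\mapsto(1-\kappa t)^a$ in $\kappa$ — more precisely, I would show $\kappa\mapsto h^{a,b}(\kappa)$ is convex on $[0,1]$ by differentiating twice under the integral sign (justified by the uniform local integrability just established), giving $\partial_\kappa^2 h^{a,b}(\kappa)=a(a-1)\int_{-1}^1\lambda^2(1-\kappa\lambda)^{a-2}(1-\lambda^2)^b\,\d\lambda$, whose integrand is nonnegative and whose prefactor $a(a-1)$ is $\ge0$ exactly when $a\le0$ or $a\ge1$. A convex function on $[0,1]$ attains its maximum at an endpoint; since $h^{a,b}(0)=\int_{-1}^1(1-\lambda^2)^b\,\d\lambda$ and, by the symmetry $\lambda\mapsto-\lambda$ on the even part together with $(1-\kappa\lambda)^a+(1+\kappa\lambda)^a\ge 2$ (a second application of convexity, now in the variable $\kappa\lambda$), one checks $h^{a,b}(\kappa)\ge h^{a,b}(0)$, the maximum is forced to be at $\kappa=1$. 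Combining with the computation of $h^{a,b}(1)$ above finishes the proof.

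The main obstacle I anticipate is the differentiation under the integral sign when $a\in(1,2)$ or $a<0$, where $(1-\kappa\lambda)^{a-2}$ develops a non-integrable-looking singularity as $\kappa\to1^-$; I would handle this by working on compact subintervals $\kappa\in[0,1-\varepsilon]$ to get convexity there, and then pass to the limit $\varepsilon\to0$ using continuity of $h^{a,b}$ at $\kappa=1$ (which follows from dominated convergence with the dominating function $(1-\lambda)^{\min(a,0)}(1-\lambda^2)^b\in L^1$). Convexity on each $[0,1-\varepsilon]$ plus continuity up to $\kappa=1$ yields convexity on all of $[0,1]$, and the endpoint-maximum conclusion goes through.
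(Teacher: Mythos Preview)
Your argument is correct and takes a somewhat different route from the paper. The paper differentiates \emph{once} and symmetrises the resulting integral over $\lambda\in[0,1]$ to obtain
\[
\frac{\d}{\d\kappa}h^{a,b}(\kappa)=a\int_0^1\big((1+\kappa\lambda)^{a-1}-(1-\kappa\lambda)^{a-1}\big)\lambda(1-\lambda^2)^b\,\d\lambda,
\]
from which the sign of the derivative is read off directly: nonnegative when $a\in(-\infty,0]\cup[1,\infty)$ and nonpositive when $a\in(0,1)$. Monotonicity then gives the supremum at $\kappa=1$ (respectively $\kappa=0$) in one stroke. You instead differentiate twice to get convexity of $\kappa\mapsto h^{a,b}(\kappa)$, use that a convex function attains its maximum at an endpoint, and then rule out $\kappa=0$ via the separate convexity-in-the-integrand inequality $(1-\kappa\lambda)^a+(1+\kappa\lambda)^a\ge2$. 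The paper's approach is more direct --- one derivative, no endpoint comparison needed --- but yours is equally valid and makes the role of the condition $a\notin(0,1)$ (convexity of $t\mapsto t^a$) just as transparent. Your care in justifying differentiation under the integral sign on $[0,1-\varepsilon]$ and passing to the limit via continuity is a point the paper does not spell out.

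Two small corrections in your finiteness step that do not affect the conclusion: for $a\ge0$ the bound $(1-\kappa\lambda)^a\le1$ holds only on $\lambda\in[0,1]$; on $\lambda\in[-1,0]$ one has $(1-\kappa\lambda)^a\le2^a$, so the uniform bound should carry a factor $2^a$. For $a<0$, your pointwise estimates yield
\[
h^{a,b}(\kappa)\le\int_{-1}^0(1-\lambda^2)^b\,\d\lambda+\int_0^1(1-\lambda)^a(1-\lambda^2)^b\,\d\lambda,
\]
which is finite but is not equal to $h^{a,b}(1)$ as written (the $[-1,0]$ contribution here exceeds the corresponding piece of $h^{a,b}(1)$, since $(1-\lambda)^a\le1$ there).
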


\begin{proof}
By the Lebesgue dominated convergence theorem,
\begin{align*}
\frac{\d}{\d\kappa}h^{a,b}(\kappa)&=-a\kappa\int_{-1}^1(1-\kappa\lambda)^{a-1}\lambda(1-\lambda^2)\,\d\lambda\\
&=a\kappa\int_0^1\left((1+\kappa\lambda)^{a-1}-(1-\kappa\lambda)^{a-1}\right)\lambda(1-\lambda^2)^b\,\d\lambda
\end{align*}
Thus,
\[
\begin{cases}
\frac{\d}{\d\kappa}h^{a,b}(\kappa)\ge0\qquad&\text{if $a\in(-\infty,0]\cup[1,\infty)$},\\
\frac{\d}{\d\kappa}h^{a,b}(\kappa)<0\qquad&\text{if $a\in(0,1)$}.
\end{cases}
\]

For $a\in(-\infty,0]\cup[1,\infty)$,
\[
\sup_{\kappa\in[0,1]}h^{a,b}(\kappa)
=
h^{a,b}(1)=\int_{-1}^1(1-\lambda)^a(1-\lambda^2)^b\,\d\lambda
\]
and the change of variables $1+\lambda\mapsto2\lambda$ gives
\[
\int_{-1}^1(1-\lambda)^a(1-\lambda^2)^b\,\d\lambda=2^{a+2b+1}B(a+b+1,b+1)<\infty
\]
if $a+b>0$ and $b>-1$. Similarly, for $a\in(0,1)$,
\[
h^{a,b}(\kappa)\le h^{a,b}(0)=2^{2b+1}B(b+1,b+1)<\infty
\]
if $b>-1$.
\end{proof}

\begin{proof}[Proof of Corollary \ref{cor:biest radial}]
Let $f$, $g$ be radially symmetric. By Theorem \ref{thm:main}, we have
\begin{align}\label{i:radial theorem}
&\||\square|^{\beta}(e^{it\phi_s(\sqrt{-\Delta})}f\overline{e^{it\phi_s(\sqrt{-\Delta})}g})\|_{L^2(\mathbb{R}^{d+1})}^2\nonumber\\
&\quad\le \KG(\beta,d)
\int_0^\infty\!\int_0^\infty
|\widehat{f}(r_1)|^2|\widehat{g}(r_2)|^2
\phi_s(r_1)^{\frac{d-1}{2}+2\beta}\phi_s(r_2)^{\frac{d-1}{2}+2\beta}\Theta_\frac12^{\frac{d-2}{2}+2\beta}(r_1,r_2)r_1^{d-1}r_2^{d-1}\,\d r_1\d r_2,
\end{align}
where
\[
\Theta_a^b(r_1,r_2)
:=
\int_{(\mathbb{S}^{d-1})^2}
\frac{
\left(1-\frac{r_1r_2\theta_1\cdot\theta_2}{\phi_s(r_1)\phi_s(r_2)}-\frac{s^2}{\phi_s(r_1)\phi_s(r_2)}\right)^b
}{
\left(1-\frac{r_1r_2\theta_1\cdot\theta_2}{\phi_s(r_1)\phi_s(r_2)}+\frac{s^2}{\phi_s(r_1)\phi_s(r_2)}\right)^a
}
\,\d\sigma(\theta_1)\d\sigma(\theta_2).
\]
We divide the range of $\beta$ into $\beta\in[\frac{2-d}{4},\frac{3-d}{4}]$ and $\beta\in[\frac{5-d}{4},\infty)$ and treat these cases differently. First, let us consider $\beta\in[\frac{5-d}{4},\infty)$ as the easier case. By applying the fundamental kernel estimate \eqref{ineq:kernelest wave}, we have
\begin{align*}
\Theta_\frac12^{\frac{d-2}{2}+2\beta}(r_1,r_2)
&\leq
\Theta_0^{\frac{d-3}{2}+2\beta}(r_1,r_2)=|\mathbb S^{d-1}||\mathbb S^{d-2}|h^{\frac{d-3}{2}+2\beta,\frac{d-3}{2}}(\kappa)
\end{align*}
with $\kappa=\frac{r_1r_2}{\phi_s(r_1)\phi_s(r_2)}$. Since $d-3+2\beta\geq1$, Lemma \ref{lem:monotonicity} implies that 
\[
\sup_{\kappa\in[0,1)}h^{\frac{d-3}{2}+2\beta,\frac{d-3}{2}}(\kappa)
=
h^{\frac{d-3}{2}+2\beta,\frac{d-3}{2}}(1),
\]
and hence
\[
\sup_{r_1,r_2>0}
\Theta_\frac12^{\frac{d-2}{2}+2\beta}(r_1,r_2)\le 2^{\frac{3d-7}{2}+2\beta}|\mathbb{S}^{d-1}||\mathbb{S}^{d-2}|B\left(d-2+2\beta,\tfrac{d-1}{2}\right),
\]
which yields \eqref{ineq:biest D_-D_+ } with $C=\F(\beta,d)^\frac12$.\\

For $\beta\in[\frac{2-d}{4},\frac{3-d}{4}]$, in which case $ \frac{d-2}{2}+2\beta\in[0,\frac12]$, the basic idea of our argument is the same as above but it requires a few more steps.  Let
\[
\Xi(\nu,\upsilon)
:=
\int_{-1}^1
\frac{
(
1-\nu-\sqrt{1-\nu^2-\upsilon^2}\lambda
)^{\frac{d-2}{2}+2\beta}
}{
(
1+\nu-\sqrt{1-\nu^2-\upsilon^2}\lambda
)^\frac12
}
\,\d\mu(\lambda)
\]
with $\nu$ and $\upsilon$ satisfying
\[
\nu\in[0,1],\qquad \upsilon^2\leq1-\nu^2,
\]
and $\d\mu(\lambda)=(1-\lambda^2)^\frac{d-3}{2}\,\d\lambda$. Then, from \eqref{i:radial theorem}, it suffices to show
\begin{equation}\label{i:Xi Xi Xi(0,0)}
\Xi(\nu,\upsilon)\leq\Xi(0,\upsilon)\leq\Xi(0,0).
\end{equation}
In order to show the first inequality of \eqref{i:Xi Xi Xi(0,0)}, we establish monotonicity in $\nu \in \big[0,\sqrt{\frac{1-\upsilon^2}{2}}\big]$, and calculate directly for $\nu\in\big[\sqrt{\tfrac{1-\upsilon^2}{2}},\sqrt{1-\upsilon^2}\big]$.
Indeed, it simply follows that 
\begin{align*}
\partial_\nu\Xi(\nu,\upsilon)&\leq
-
\left(\frac{d-2}{2}+2\beta\right)
\int_0^1
\frac{(1-\nu-\sqrt{1-\nu^2-\upsilon^2}\lambda)^{\frac{d-4}{2}+2\beta}}{(1+\nu-\sqrt{1-\nu^2-\upsilon^2}\lambda)^\frac12}\left(1-\frac{\nu}{\sqrt{1-\nu^2-\upsilon^2}}\lambda\right)\,\d\mu(\lambda)\\
&\qquad
-
\frac12
\int_0^1
\frac{(1-\nu-\sqrt{1-\nu^2-\upsilon^2}\lambda)^{\frac{d-2}{2}+2\beta}}{(1+\nu-\sqrt{1-\nu^2-\upsilon^2}\lambda)^\frac32}\left(1+\frac{\nu}{\sqrt{1-\nu^2-\upsilon^2}}\lambda\right)\,\d\mu(\lambda)\\
&\qquad\quad
-
\left(\frac{d-2}{2}+2\beta\right)
\int_0^1
\frac{(1-\nu+\sqrt{1-\nu^2-\upsilon^2}\lambda)^{\frac{d-4}{2}+2\beta}}{(1+\nu+\sqrt{1-\nu^2-\upsilon^2}\lambda)^\frac12}\left(1+\frac{\nu}{\sqrt{1-\nu^2-\upsilon^2}}\lambda\right)\,\d\mu(\lambda)\\
&\qquad\qquad
-
\frac12
\int_0^1
\frac{(1-\nu+\sqrt{1-\nu^2-\upsilon^2}\lambda)^{\frac{d-2}{2}+2\beta}}{(1+\nu+\sqrt{1-\nu^2-\upsilon^2}\lambda)^\frac32}\left(1-\frac{\nu}{\sqrt{1-\nu^2-\upsilon^2}}\lambda\right)\,\d\mu(\lambda),
\end{align*}
which is non-positive since $\beta\geq\frac{2-d}{4}$ and 
\[
1-\frac{\nu}{\sqrt{1-\nu^2-\upsilon^2}}\lambda\geq0
\]
for $\nu\in\big[0,\sqrt{\tfrac{1-\upsilon^2}{2}}\big]$.  On the other hand, for $\nu\in\big[\sqrt{\tfrac{1-\upsilon^2}{2}},\sqrt{1-\upsilon^2}\big]$, which imposes $0\leq\sqrt{1-\nu^2-\upsilon^2}\leq\nu$, it follows that 
\begin{align*}
\Xi(\nu,\upsilon)
&=
\int_0^1
\frac{
(
1-\nu-\sqrt{1-\nu^2-\upsilon^2}\lambda
)^{\frac{d-2}{2}+2\beta}
}{
(
1+\nu-\sqrt{1-\nu^2-\upsilon^2}\lambda
)^\frac12
}
\,\d\mu(\lambda)
+
\int_0^1
\frac{
(
1-\nu+\sqrt{1-\nu^2-\upsilon^2}\lambda
)^{\frac{d-2}{2}+2\beta}
}{
(
1+\nu+\sqrt{1-\nu^2-\upsilon^2}\lambda
)^\frac12
}
\,\d\mu(\lambda)\\
&\leq
\int_0^1 2 \,\d\mu(\lambda)\\
&
\leq
\int_0^1(1-\sqrt{1-\upsilon^2}\lambda)^{\frac{d-3}{2}+2\beta}\,\d\mu(\lambda)
+
\int_0^1(1+\sqrt{1-\upsilon^2}\lambda)^{\frac{d-3}{2}+2\beta}\,\d\mu(\lambda)\\
&=
\Xi(0,\upsilon).
\end{align*}
Here, the first inequality is justified as long as $\beta\geq\frac{2-d}{4}$ and  the second inequality is given by the arithmetic-geometric mean:
\begin{align*}
\frac12
\left(
(1-\sqrt{1-\upsilon^2}\lambda)^{\frac{d-3}{2}+2\beta}
+
(1+\sqrt{1-\upsilon^2}\lambda)^{\frac{d-3}{2}+2\beta}
\right)
\geq
\left(
1-(1-\upsilon^2)\lambda^2
\right)^{\frac{d-3}{4}+\beta}
\geq1.
\end{align*}
Since the second inequality of \eqref{i:Xi Xi Xi(0,0)} can be readily proved by Lemma \ref{lem:monotonicity}, we have \eqref{ineq:biest D_-D_+ } with $C=\F(\beta,d)^\frac12$ for $\beta\in[\frac{2-d}{4},\frac{3-d}{4}]$ as well.

\end{proof}

\subsection{Threshold of our argument for $\beta\in(\frac{3-d}{4},\frac{5-d}{4})$}\label{sec:arg. on gap}
Although $C=\F(\beta,d)^\frac12$ will be shown to be optimal for $\beta\in[\frac{2-d}{4},\frac{3-d}{4}]\cup[\frac{5-d}{4},\infty)$ in the case of radial data, it remains unclear whether this continues to be true for $\beta\in(\frac{3-d}{4},\frac{5-d}{4})$; here we establish that there is no way to obtain the constant $\F(\beta,d)^\frac12$ if one first makes use of Theorem \ref{thm:main}.
In order to show that, we shall invoke the following useful result for the beta function due to Agarwal--Barnett--Dragmir \cite{ABD00}:

\begin{lemma}[\cite{ABD00}]\label{lem:B<B}
Let $m$, $p$ and $k\in\mathbb R$ satisfy $m$, $p>0$, and $p>k>-m$. If we have 
\begin{equation}\label{cond:B<B}
k(p-m-k)>0
\end{equation}
then
\[
B(p,m)>B(p-k,m+k)
\]
holds.
\end{lemma}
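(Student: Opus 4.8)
The plan is to prove Lemma~\ref{lem:B<B} by reducing the comparison of two beta functions to the monotonicity of a single-variable auxiliary function and exploiting the log-convexity of $\Gamma$. Since $B(p,m)=\Gamma(p)\Gamma(m)/\Gamma(p+m)$ and $B(p-k,m+k)=\Gamma(p-k)\Gamma(m+k)/\Gamma(p+m)$ share the same denominator (note $p+m=(p-k)+(m+k)$), the claimed inequality $B(p,m)>B(p-k,m+k)$ is \emph{equivalent} to $\Gamma(p)\Gamma(m)>\Gamma(p-k)\Gamma(m+k)$. So the entire statement is a statement about the function $t\mapsto \Gamma(t)\Gamma(p+m-t)$: we must show it is strictly larger at $t=p$ (equivalently $t=m$, by symmetry of the product) than at $t=p-k$ (equivalently $t=m+k$). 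Condition~\eqref{cond:B<B}, namely $k(p-m-k)>0$, is exactly the condition that says the points $\{p,m\}$ and $\{p-k,m+k\}$ are arranged so that the former pair is \emph{less spread out} around the midpoint $\tfrac{p+m}{2}$ than the latter pair.

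First I would set $\psi(t):=\log\Gamma(t)+\log\Gamma(p+m-t)$ for $t\in(0,p+m)$ and observe that $\psi$ is symmetric about $t_0:=\tfrac{p+m}{2}$ and strictly convex, because $\psi''(t)=\Gamma''/\Gamma\,(t)-(\Gamma'/\Gamma)^2(t)+(\text{same at }p+m-t)=\psi_1'(t)+\psi_1'(p+m-t)>0$, where $\psi_1=(\log\Gamma)'$ is the digamma function and $\psi_1'$ is its trigamma function, which is strictly positive on $(0,\infty)$ (this is the log-convexity of $\Gamma$ on the positive axis, a standard fact I would cite rather than reprove). A symmetric strictly convex function on an interval is strictly decreasing in the distance from its axis of symmetry: that is, $\psi(t)$ is a strictly increasing function of $|t-t_0|$. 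Therefore $\Gamma(p)\Gamma(m)>\Gamma(p-k)\Gamma(m+k)$ holds precisely when $|p-t_0|>|(p-k)-t_0|$, i.e. $\bigl|\tfrac{p-m}{2}\bigr|>\bigl|\tfrac{p-m}{2}-k\bigr|$, which after squaring reads $-k(p-m)+k^2<0$, i.e. $k(p-m-k)>0$. This is exactly hypothesis~\eqref{cond:B<B}, so the lemma follows. One must also check the interval constraints: $p,m>0$ and $p>k>-m$ guarantee $p-k>0$ and $m+k>0$, so all four arguments of $\Gamma$ lie in $(0,p+m)$ and $\psi$ is defined and smooth at each of them.

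The only mild subtlety — and the step I expect to need the most care — is confirming that $\psi$ is \emph{strictly} convex rather than merely convex, so as to get the strict inequality in the conclusion; this is where I invoke strict positivity of the trigamma function $\psi_1'(t)=\sum_{n\ge0}(t+n)^{-2}>0$ for $t>0$. Everything else is bookkeeping: translating $k(p-m-k)>0$ into the distance comparison and noting the shared denominator $\Gamma(p+m)$. I would present the argument compactly, emphasizing the reduction to log-convexity of $\Gamma$ and the ``spread about the midpoint'' picture, and remark that the borderline case $k(p-m-k)=0$ gives equality (so the hypothesis cannot be weakened), which is why the strictness in both the hypothesis and the conclusion is the natural formulation.
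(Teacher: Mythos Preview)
Your argument is correct. The reduction to $\Gamma(p)\Gamma(m)>\Gamma(p-k)\Gamma(m+k)$ via the common denominator $\Gamma(p+m)$ is valid, and the subsequent analysis of $\psi(t)=\log\Gamma(t)+\log\Gamma(p+m-t)$ is clean: strict convexity (from positivity of the trigamma function) together with the symmetry $\psi(t_0+s)=\psi(t_0-s)$ forces $\psi'(t_0)=0$ and hence $\psi$ is strictly increasing in $|t-t_0|$; the algebraic check that $\bigl|\tfrac{p-m}{2}\bigr|>\bigl|\tfrac{p-m}{2}-k\bigr|$ is equivalent to $k(p-m-k)>0$ is correct, and the hypotheses $m,p>0$, $p>k>-m$ ensure all four $\Gamma$-arguments are positive.

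Note, however, that the paper does not supply its own proof of this lemma: it is quoted from Agarwal--Barnett--Dragomir \cite{ABD00} and used as a black box in the proof of Proposition~\ref{prop:a counter example}. The source paper, as its title indicates, derives such inequalities via classical integral inequalities (Chebyshev-type and related) applied to the integral representation $B(p,m)=\int_0^1 t^{p-1}(1-t)^{m-1}\,\d t$. Your route through the log-convexity of $\Gamma$ is genuinely different and arguably more transparent: it identifies the hypothesis \eqref{cond:B<B} as precisely the ``spread about the midpoint'' condition and makes the strictness and the borderline equality case $k(p-m-k)=0$ immediately visible. Either approach is adequate for the purposes of the present paper, where only the specific instance $(p,m,k)=(\tfrac{3d-5}{4}+\beta,\tfrac{3d-3}{4}+\beta,\tfrac{3-d}{4}-\beta)$ is needed.
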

\begin{figure}[b]
\begin{center}
\begin{tikzpicture}[scale=1]

  \draw [color=blue, fill=blue!30!,opacity=0.5, thick] (3,0.3)--(3.4,0.3)--(3.4,0.5)--(3,0.5)--(3,0.3);
  
  \draw [->](-0.5,0)--(4,0);
  \draw [->](0,-0.5)--(0,4);
  
  \draw [color=red](0.4,4) .. controls (0.4,0.4) ..(4,0.4);
  

  \draw [color=blue,dotted](3,0)--(3,4);
  \draw [color=blue,dotted](3.4,0)--(3.4,4);  
  \draw [color=blue,dotted](0,0.5)--(4,0.5);
  \draw [color=blue,dotted](0,0.3)--(4,0.3);

  \node at (-0.4,-0.4) {$O$};
  \node [right] at (4,0) {$r_1$};
  \node [left] at (0,4) {$r_2$};
  \node [below] at (3.2,0) {$\sim\frac 1\delta$};
  \node [left] at (0,0.4) {$\sim\delta$};
  \node [left] at  (3,0.8) {$\textcolor{blue}{\mathcal O_\delta}$};
   
\end{tikzpicture}
\caption{The set $\mathcal O_\delta$ along the curve $r_1=r_2^{-1}$.} \label{fig:counter example}
\end{center}
\end{figure}

With this in hand, we establish the following.
\begin{proposition}\label{prop:a counter example}
Let $d\ge2$ and $\beta\in(\frac{3-d}{4},\frac{5-d}{4})$. Then there exist radially symmetric $f$ and $g$ such that
\begin{align*}
&\KG(\beta,d)
\int_{\mathbb{R}^{2d}}
|\widehat{f}(\eta_1)|^2|\widehat{g}(\eta_2)|^2\phi_s(|\eta_1|)\phi_s(|\eta_2|)
\K_\frac12^{\frac{d-2}{2}+2\beta}(\eta_1,\eta_2)
\,\d\eta_1\d\eta_2\\
&\quad>
\F(\beta,d)
\|\phi_s(D)^{\frac{d-1}{4}+\beta}f\|_{L^2(\mathbb{R}^d)}^2
\|\phi_s(D)^{\frac {d-1}{4}+\beta}g\|_{L^2(\mathbb{R}^d)}^2
\end{align*}
holds.
\end{proposition}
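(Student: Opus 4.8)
The plan is to leverage the radial reformulation of \eqref{ineq:main} established in the proof of Corollary \ref{cor:biest radial}, reducing the claim to a pointwise statement about the kernel $\Theta_\frac12^{\frac{d-2}{2}+2\beta}$, and then to locate the required point in a degenerate limit. For radial $f$ and $g$, by \eqref{i:radial theorem} the right-hand side of \eqref{ineq:main} equals
\[
\KG(\beta,d)\int_0^\infty\int_0^\infty |\widehat f(r_1)|^2|\widehat g(r_2)|^2\,(\phi_s(r_1)\phi_s(r_2))^{\frac{d-1}{2}+2\beta}\,\Theta_\frac12^{\frac{d-2}{2}+2\beta}(r_1,r_2)\,r_1^{d-1}r_2^{d-1}\,\d r_1\d r_2,
\]
while, as one reads off from the same proof in the range $\beta\geq\frac{5-d}{4}$ (where the constant $C=\F(\beta,d)^\frac12$ is obtained precisely by this device),
\[
\F(\beta,d)\,\|\phi_s(D)^{\frac{d-1}{4}+\beta}f\|_{L^2}^2\|\phi_s(D)^{\frac{d-1}{4}+\beta}g\|_{L^2}^2
=
\KG(\beta,d)\,T_0\int_0^\infty\int_0^\infty |\widehat f(r_1)|^2|\widehat g(r_2)|^2\,(\phi_s(r_1)\phi_s(r_2))^{\frac{d-1}{2}+2\beta}\,r_1^{d-1}r_2^{d-1}\,\d r_1\d r_2,
\]
where $T_0:=2^{\frac{3d-7}{2}+2\beta}|\mathbb S^{d-1}||\mathbb S^{d-2}|\,B(d-2+2\beta,\tfrac{d-1}{2})$ is exactly the supremum of $\Theta_\frac12^{\frac{d-2}{2}+2\beta}$ over $(0,\infty)^2$ in that range (realised in the limit $r_1,r_2\to\infty$). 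Hence the proposition is equivalent to the existence of radii $r_1,r_2>0$ with $\Theta_\frac12^{\frac{d-2}{2}+2\beta}(r_1,r_2)>T_0$: given such a point, take $\widehat f=\mathbf{1}_{\{|\xi|\in U_1\}}$, $\widehat g=\mathbf{1}_{\{|\xi|\in U_2\}}$ with $U_1,U_2$ small neighbourhoods of $r_1,r_2$ on which $\Theta_\frac12^{\frac{d-2}{2}+2\beta}>T_0$ — such neighbourhoods exist because, by dominated convergence, this kernel is continuous and positive on $(0,\infty)^2$ — and the two displayed identities then yield the claimed strict inequality.

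To produce such radii I would fix $r_2>0$ and let $r_1\to\infty$. Since then $\frac{r_1}{\phi_s(r_1)}\to1$ and $\frac{s^2}{\phi_s(r_1)\phi_s(r_2)}\to0$, dominated convergence together with the spherical-average identity $\int_{\mathbb S^{d-1}}F(\theta_1\cdot\theta_2)\,\d\sigma(\theta_1)=|\mathbb S^{d-2}|\int_{-1}^1F(\lambda)(1-\lambda^2)^{\frac{d-3}{2}}\,\d\lambda$ give
\[
\lim_{r_1\to\infty}\Theta_\frac12^{\frac{d-2}{2}+2\beta}(r_1,r_2)=|\mathbb S^{d-1}||\mathbb S^{d-2}|\,h^{a,\frac{d-3}{2}}\!\Big(\tfrac{r_2}{\phi_s(r_2)}\Big),\qquad a:=\tfrac{d-3}{2}+2\beta,
\]
with $h$ the function of Lemma \ref{lem:monotonicity}. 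On the gap $\beta\in(\frac{3-d}{4},\frac{5-d}{4})$ one has $a\in(0,1)$, so the derivative computation in the proof of Lemma \ref{lem:monotonicity} shows $\kappa\mapsto h^{a,\frac{d-3}{2}}(\kappa)$ is strictly decreasing on $[0,1]$; since $\tfrac{r_2}{\phi_s(r_2)}\in(0,1)$ and $|\mathbb S^{d-1}||\mathbb S^{d-2}|\,h^{a,\frac{d-3}{2}}(1)=T_0$ (the very beta evaluation used to identify $T_0$), it follows that $\lim_{r_1\to\infty}\Theta_\frac12^{\frac{d-2}{2}+2\beta}(r_1,r_2)>T_0$, hence $\Theta_\frac12^{\frac{d-2}{2}+2\beta}(r_1,r_2)>T_0$ for $r_1$ large and any fixed $r_2>0$. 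This completes the argument.

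The entire substance is the strict inequality $h^{a,\frac{d-3}{2}}(\kappa)>h^{a,\frac{d-3}{2}}(1)$ for $\kappa\in[0,1)$ and $a\in(0,1)$: the supremum of the kernel, realised at the boundary value $T_0$ (as $r_1,r_2\to\infty$) when $\beta$ lies outside the gap, is strictly overshot inside the gap in the unbalanced configuration $r_1\to\infty$ with $r_2$ bounded — and it is precisely the requirement $a=\frac{d-3}{2}+2\beta\in(0,1)$ that carves out the interval $(\frac{3-d}{4},\frac{5-d}{4})$, which is what makes the assertion slightly delicate in view of the fact that \eqref{ineq:main} is itself sharp. In its cleanest form, letting $\kappa\to0^+$ and writing $m:=\frac{d-1}{2}$, this is the inequality $B(m,m)>2^{a}B(m+a,m)$ for $a\in(0,1)$, which — upon using Legendre's duplication formula to rewrite $B(m,m)/\big(2^aB(m+a,m)\big)$ as a ratio of two Gamma-products with equal argument-sum, namely $\Gamma(m+\tfrac a2)\Gamma(m+\tfrac{a+1}{2})/\big(\Gamma(m+\tfrac12)\Gamma(m+a)\big)$ — is exactly an instance of Lemma \ref{lem:B<B}; alternatively it is immediate from the strict concavity of $x\mapsto x^{a}$, through $\tfrac12\big((1-\lambda)^{a}+(1+\lambda)^{a}\big)<1$ for $\lambda\in(-1,1)\setminus\{0\}$. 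I expect this comparison to be the only non-routine step; the reduction and the limit computation are straightforward.
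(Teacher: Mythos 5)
Your proof is correct and follows essentially the same route as the paper: reduce to exhibiting radii at which the angular kernel $\Theta_{\frac12}^{\frac{d-2}{2}+2\beta}$ strictly exceeds the normalising constant matching $\F(\beta,d)$, pass to an unbalanced scaling limit, and close via the same beta-function inequality, verified by the same Lemma \ref{lem:B<B} from \cite{ABD00}. The only variations are cosmetic — the paper sends one radius to infinity and the other to zero (exactly your $\kappa\to0^+$ case) rather than holding $r_2$ fixed, and your remark that the key inequality also follows from the strict concavity of $x\mapsto x^a$ on $(0,1)$ via $\tfrac12\bigl((1-\lambda)^a+(1+\lambda)^a\bigr)<1$ is a pleasant elementary alternative to citing \cite{ABD00} — so the substance is the same.
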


\begin{proof}
Let $0<\delta\ll1$ and 
\[
A=\left\{\xi\in\mathbb{R}^d\,:\, \frac12<|\xi|<2\right\}.
\]
Define $f_A$ and $g_A$ so that for $\xi\in\mathbb{R}^d$ 
\[
\widehat{f_A}(\xi)=\chi_A(\tfrac\xi\delta)\quad\text{and}\quad\widehat{g_A}(\xi)=\chi_A(\delta\xi),
\]
where $\chi_A$ is the characteristic function of $A$. By use of polar coordinates,
\begin{align*}
&\int_{\mathbb{R}^{2d}}
|\widehat{f_A}(\eta_1)|^2|\widehat{g_A}(\eta_2)|^2\phi_s(|\eta_1|)\phi_s(|\eta_2|)
\K_\frac12^{\frac{d-2}{2}+2\beta}(\eta_1,\eta_2)
\,\d\eta_1\d\eta_2\\
&\qquad=
\int_{\mathcal{O}_\delta}|\widehat{f_A}(r_1)|^2|\widehat{g_A}(r_2)|^2(\phi_s(r_1)\phi_s(r_2))^{\frac{d-1}{2}+2\beta}\Theta_\frac12^{\frac{d-2}{2}+2\beta}(r_1,r_2)^{d-1}\,\d r_1\d r_2.
\end{align*}
Here, the set $\mathcal O_\delta$ is defined by (see also Figure \ref{fig:counter example})
\[
\mathcal{O}_\delta
=
\left\{
(r_1,r_2):\frac{1}{2\delta}<r_1<\frac{2}{\delta},\frac{\delta}{2}<r_2<2\delta
\right\}.
\]
Now, for $(r_1,r_2)\in\mathcal{O}_\delta$, by taking the limit $\delta\to0$ represented by $\phi_s(r_1)\to \infty$ and $\phi_s(r_2)\to s$, it follows that  
\begin{align*}
\Theta_\frac12^{\frac{d-2}{2}+2\beta}(r_1,r_2)
\to
|\mathbb S^{d-1}|^2.
\end{align*}
Therefore, for sufficiently small $\delta>0$, 
\begin{align*}
&\KG(\beta,d)
\int_{\mathbb{R}^{2d}}
|\widehat{f_A}(\eta_1)|^2|\widehat{g_A}(\eta_2)|^2\phi_s(|\eta_1|)\phi_s(|\eta_2|)
\K_\frac12^{\frac{d-2}{2}+2\beta}(\eta_1,\eta_2)
\,\d\eta_1\d\eta_2\\
&\quad=
(2\pi)^{2d}
\KG(\beta,d)
\|\phi_s(\sqrt{-\Delta})^{\frac{d-1}{4}+\beta}f_A\|_{L^2(\mathbb{R}^d)}^2
\|\phi_s(\sqrt{-\Delta})^{\frac {d-1}{4}+\beta}g_A\|_{L^2(\mathbb{R}^d)}^2,
\end{align*}
and it is enough to show
\begin{align}\label{ineq:counterexample}
(2\pi)^{2d}\KG(\beta,d)
>
\F(\beta,d)
\end{align}
for $d\geq2$ and $\beta\in(\frac{3-d}{4},\frac{5-d}{4})$. By the formula \eqref{eq:volume of d-sphere} and the definitions of constants, this can be simplified as
\[
B(\tfrac{3d-5}{4}+\beta,\tfrac{3d-3}{4}+\beta)>B(d-2+2\beta,\tfrac d2)
\]
which, if fact, follows from Lemma \ref{lem:B<B} by letting $p=\frac{3d-5}{4}+\beta$, $m=\frac{3d-3}{4}+\beta$ and $k=\frac{3-d}{4}-\beta$ for $d\geq2$ and $\beta\in (\frac{3-d}{4},\frac{5-d}{4})$. Note that for the specific triple $(p,m,k)$, the hypothesis \eqref{cond:B<B} is equivalent to considering $\beta$ from the gap $(\frac{3-d}{4},\frac{5-d}{4})$ when $d\geq2$.
\end{proof}

\subsection{Contributions of radial symmetry}
Here, we observe for general (not necessary radially symmetric) data $f$ and $g$ the inequality  \eqref{ineq:biest D_-D_+ } holds only if $\beta\geq\frac{3-d}{4}$, in other words, the radial symmetry condition on $f$ and $g$ widens the range of the regularity parameter $\beta$. The proof is based on the Knapp type argument in \cite{FK00} where they proved $\beta_-\geq\frac{3-d}{4}$ is necessary for \eqref{ineq:null +-} to hold.

\begin{proposition}\label{prop:necessary condition}
Let $\beta<\frac{3-d}{4}$. For any $C_*>0$, there exists $f,g\in H^{\frac{d-1}{4}+\beta}(\mathbb{R}^d)$ such that
\begin{align}
&\||\square|^{\beta}(e^{it\phi_s(\sqrt{-\Delta})}f\overline{e^{it\phi_s(\sqrt{-\Delta})}g})\|_{L^2(\mathbb{R}^{d+1})}^2\\
&\quad> 
C_*
\|\phi_s(\sqrt{-\Delta})^{\frac{d-1}{4}+\beta}f\|_{L^2(\mathbb{R}^d)}^2
\|\phi_s(\sqrt{-\Delta})^{\frac {d-1}{4}+\beta}g\|_{L^2(\mathbb{R}^d)}^2.\nonumber
\end{align}
\end{proposition}

\begin{proof}
For $\eta_1\in\mathbb{R}^d$ (similarly, for $\eta_2\in\mathbb R^d$), we set indices $(1), \ldots, (d)$ to indicate  components of vectors, namely, $\eta_1=(\eta_{1(1)},\ldots,\eta_{1(d)})$. Also, denote $\eta'_1=(\eta_{1(2)},\ldots,\eta_{1(d)})\in\mathbb{R}^{d-1}$ and $\eta''_1=(\eta_{1(3)},\ldots,\eta_{1(d)})\in\mathbb{R}^{d-2}$. Now, for large $L>0$, eventually sent to infinity, define sets $\setF$ and $\setG$ by
\[
\setF=\{\eta\in\mathbb{R}^d:L\le\eta_{(1)}\le 2L, 1\le\eta_{(2)}\le2,|\eta''|\le1\}
\]
and 
\[
\setG=\{\eta\in\mathbb{R}^d:L\le\eta_{(1)}\le 2L, -1\le\eta_{(2)}\le-2,|\eta''|\le1\}.
\]
For such $f$ and $g$ 
\[
\left|
|\square|^\beta
(e^{it\phi_s(D)}f(x)\overline{e^{it\phi_s(D)}g(x)})
\right|
\sim
\left|
\int_\setF\int_\setG e^{i\Phi_s(x,t:\eta_1,\eta_2)}\K_{-\beta}^0(\eta_1,\eta_2)\,\d\eta_1\d\eta_2
\right|,
\]
where
\[
\Phi_s(x,t:\eta_1,\eta_2)=x\cdot(\eta_1-\eta_2)+t(\phi_s(|\eta_1|)-\phi_s(|\eta_2|)).
\]
Now, we follow the idea of Knapp's example to derive a lower bound. From the setting
we have $|\eta_1|\sim|\eta_2|\sim\phi_s(|\eta_1|)\sim\phi_s(|\eta_2|)\sim|\eta_1+\eta_2|\sim L$, $|\eta_{1(1)}-\eta_{2(1)}|\sim\theta\sim L^{-1}$, $|\phi_s(|\eta_1|)-\eta_{1(1)}|\sim|\eta_1'|^2|\eta_1|^{-1}\sim |\phi_s(|\eta_2|)-\eta_{2(1)}|\sim|\eta_2'|^2|\eta_2|^{-1}\sim L^{-1}$,  and $|\eta_1'+\eta_2'|\lesssim1$ for $(\eta_1,\eta_2)\in \setF\times \setG$. Then, it follows that 
\begin{align}\label{ineq:W beta pre-estimate}
(\phi_s(|\eta_1|)\phi_s(|\eta_2|))^2-(\eta_1\cdot\eta_2-s^2)^2
\sim
s^2|\eta_1+\eta_2|^2+|\eta_1|^2|\eta_2|^2\sin^2\theta
\sim
L^2
\end{align} 
and hence
\[
\K_{-\beta}^0(\eta_1,\eta_2)
\sim
\left(
\frac{(\phi_s(|\eta_1|)\phi_s(|\eta_2|))^2-(\eta_1\cdot\eta_2-s^2)^2}{\phi_s(|\eta_1|)\phi_s(|\eta_2|)+\eta_1\cdot\eta_2+s^2}
\right)^\beta
\sim
1.
\]
Moreover, for the phase, then it follows that
\begin{align*}
&|\Phi_s(x,t:\eta_1,\eta_2)|\\
&\quad=
|t(\phi_s(|\eta_1|)-\eta_{1(1)}-\phi_s(|\eta_2|)+\eta_{2(1)})+(x_{(1)}+t)(\eta_{1(1)}-\eta_{2(1)})+x'\cdot(\eta_1'+\eta_2')|\\
&\quad\le
 |t|L^{-1}+|x_{(1)}+t|L^{-1}+|x'|<\frac\pi3
\end{align*}
for $(x,t)=(x_{(1)},x',t)$ in a slab $\setR=[-L^{-1},L^{-1}]\times[-1,1]^{d-1}\times[-L,L]$  whose volume is the order of $1$. Hence, 
\[
|\square|^\beta
(e^{it\phi_s(D)}f(x)\overline{e^{it\phi_s(D)}g(x)})
\gtrsim
|\setF||\setG|\chi_\setR(x,t)
\]
and so
\[
\||\square|^{\beta}
(e^{it\phi_s(D)}f(x)\overline{e^{it\phi_s(D)}g(x)})
\|_{L^2(\mathbb R^{d+1})}^2
\gtrsim
|\setF|^2|\setG|^2|\setR|\sim|\setF|^2|\setG|^2.
\]

%

On the other hand, we have
\[
\|\phi_s(\sqrt{-\Delta})^{\frac{d-1}{4}+\beta}f\|_{L^2(\mathbb{R}^d)}^2
\|\phi_s(\sqrt{-\Delta})^{\frac {d-1}{4}+\beta}g\|_{L^2(\mathbb{R}^d)}^2\lesssim L^{d-1+4\beta}|\setF||\setG|.
\]
Therefore, it is implied that 
\[
|\setF|^2|\setG|^2\lesssim L^{d-1+4\beta}|\setF||\setG|.
\]
The fact $|\setF|\sim|\setG|\sim L$ and letting $L\to\infty$ result in the desired necessary condition
\[
\frac{3-d}{4}\le\beta.
\]
\end{proof}

\begin{figure}[t]
\begin{center}
\begin{tikzpicture}[rotate around x=-60, rotate around y=0,rotate around z=-10,scale=0.45]
\fill [fill=lightgray] (10,1,0)--(10,2,0)--(18,2,0)--(18,1,0);
\fill [fill=lightgray] (10,-1,0)--(10,-2,0)--(18,-2,0)--(18,-1,0);
\fill [fill=red!30!,opacity=0.5] (10,1,1)--(10,2,1)--(18,2,1)--(18,2,-1)--(18,1,-1)--(10,1,-1);
\fill [fill=blue!30!,opacity=0.5] (10,-1,1)--(10,-2,1)--(10,-2,-1)--(18,-2,-1)--(18,-1,-1)--(18,-1,1);

\draw [->] (-1,0,0)--(22,0,0); \node [right] at (22,0,0) {$\eta_{(1)}$};
\draw [->] (0,-4,0)--(0,4,0); \node [above] at (0,4,0) {$\eta_{(2)}$};
\draw [->] (0,0,-3)--(0,0,3); \node [above] at (0,0,3) {$\eta''$};

\draw [dotted] (0,2,0)--(18,2,0);
\draw [dotted] (0,1,0)--(18,1,0);
\draw [dotted] (10,0,0)--(10,2,0);
\draw [dotted] (18,0,0)--(18,2,0)--(0,2,0);

\draw [dotted] (0,-2,0)--(18,-2,0);
\draw [dotted] (0,-1,0)--(18,-1,0);
\draw [dotted] (10,0,0)--(10,-2,0);
\draw [dotted] (18,0,0)--(18,-2,0)--(0,-2,0);

\draw  (10,1,-1)--(10,2,-1)--(18,2,-1)--(18,1,-1)--(10,1,-1);
\draw  (10,1,1)--(10,2,1)--(18,2,1)--(18,1,1)--(10,1,1);
\draw  (10,1,-1)--(10,2,-1)--(10,2,1)--(10,1,1)--(10,1,-1);
\draw  (18,1,-1)--(18,2,-1)--(18,2,1)--(18,1,1)--(18,1,-1);

\draw [thick] (18,1,-1)--(18,2,-1)--(18,2,1)--(18,1,1)--(18,1,-1);
\draw [thick] (18,1,-1)--(10,1,-1)--(10,1,1)--(18,1,1);
\draw [thick] (10,1,1)--(10,2,1)--(18,2,1);

\draw  (10,-1,-1)--(10,-2,-1)--(18,-2,-1)--(18,-1,-1)--(10,-1,-1);
\draw  (10,-1,1)--(10,-2,1)--(18,-2,1)--(18,-1,1)--(10,-1,1);
\draw  (10,-1,-1)--(10,-2,-1)--(10,-2,1)--(10,-1,1)--(10,-1,-1);
\draw  (18,-1,-1)--(18,-2,-1)--(18,-2,1)--(18,-1,1)--(18,-1,-1);

\draw [thick] (18,-1,-1)--(18,-2,-1)--(18,-2,1)--(18,-1,1)--(18,-1,-1);
\draw [thick] (18,-2,1)--(10,-2,1)--(10,-1,1)--(18,-1,1)--(18,-2,1);
\draw [thick] (10,-2,1)--(10,-2,-1)--(18,-2,-1);

\draw (0,0,0)--(16,1.5,-0.5);
\fill (16,1.5,-0.5) circle (2pt);
\draw (0,0,0)--(12,-1.3,0.5);
\fill (12,-1.3,0.5) circle (2pt);
\fill [fill=white] (5.6,0.1,0)--(6.4,0.1,0)--(6.4,-0.1,0)--(5.6,-0.1,0);
\draw (5,-0.6,0.3) to[out=20, in=-70] (5,0.35,0);
\node   at (6,0,0) {$\theta$};  

\node at (12,3,1) {$\textcolor{red}{\setF}$};
\node at (12,-3,-1) {$\textcolor{blue}{\setG}$};
\node[below] at (19,0,0) {$2L$};
\node [left] at (0,1,0) {$1$};
\node [left] at (0,2,0) {$2$};
\node [left] at (-0.2,0,-0.5) {$O$};
\end{tikzpicture}
\caption{The sets $\setF$ and $\setG$, which are sent away from the origin along $\eta_{(1)}$-axis.} \label{fig:setF and setG}
\end{center}
\end{figure}

\section{Sharpness of constants}\label{sec:sharpness of constants}
Let us begin with some supplemental discussions on the refined Strichartz estimates \eqref{ineq:Strichartz refined general J} and \eqref{ineq:Strichartz refined non-wave}. Then,  we focus on completing our proof of Corollaries \ref{cor:biest radial}, \ref{cor:wave} and \ref{cor:non-wave} by proving that the stated constants are optimal and non-existence of extremisers. We achieve optimality of constants by considering the functions $f_a$ given by \eqref{extremisers}; this is a natural guess given that such functions are extremisers for \eqref{ineq:main}, as shown in our proof of Theorem \ref{thm:main}. Before proceeding, we introduce the following useful notation.
\[
\L_a(\beta)
:=
\int_{4as}^\infty
e^{-\rho}
\int_0^{(2a)^{-1}\sqrt{\rho^2-(4as)^2}}
\frac{(\rho^2-(2ar)^2-(4as)^2)^{\frac{d-2}{2}+2\beta}}{\rho^2-(2ar)^2}r^{d-1}
\,\d r\d\rho
\]
and 
\[
\R_a(\beta,b(\beta))
:=
\left(
\int_{2as}^\infty e^{-\rho}\rho^{b(\beta)}(\rho^2-(2as)^2)^{\frac{d-2}{2}}\,\d\rho
\right)^2.
\]

\subsection{On the refined Strichartz estimates}

It is straightforward that the estimates \eqref{ineq:corollaries} with claimed constants in Corollaries \ref{cor:wave} and \ref{cor:non-wave} when $(\alpha,\beta)=(\frac12,\frac{3-d}{4})$ and $(\alpha,\beta)=(\frac12,\frac{2-d}{4})$ coincide with the results obtained by applying the kernel estimates \eqref{ineq:kernelest wave} and \eqref{ineq:kernelest non-wave} to \eqref{ineq:main}, respectively. To obtain the estimates \eqref{ineq:Strichartz refined general J} and \eqref{ineq:Strichartz refined non-wave}, we require the additional fact that 
\[
\int_{\mathbb R^{2d}}f(x)f(y)x\cdot y\,\d x\d y\geq0.
\]
Indeed, in the wave regime, after we apply the kernel estimate \eqref{ineq:kernelest wave} to \eqref{ineq:main}, it follows that
\begin{align*}
&\int_{\mathbb R^{2d}}
|\widehat{f}(\eta_1)|^2|\widehat{f}(\eta_2)|^2
\phi_s(|\eta_1|)\phi_s(|\eta_2|)
\K_0^1(\eta_1,\eta_2)
\,\d\eta_1\d\eta_2\\
&\quad\leq
\int_{\mathbb R^{2d}}
|\widehat{f}(\eta_1)|^2|\widehat{f}(\eta_2)|^2
\phi_s(|\eta_1|)\phi_s(|\eta_2|)
(\phi_s(|\eta_1|)\phi_s(|\eta_2|)-s^2)
\,\d\eta_1\d\eta_2,
\end{align*}
which immediately yields \eqref{ineq:Strichartz refined general J}. Similarly, one can deduce \eqref{ineq:Strichartz refined non-wave} in the non-wave regime. Finally, the estimate \eqref{ineq:corollaries} with $C=\F(\frac{5-d}{4},d)^\frac12$ when $(\alpha,\beta)=(1,\frac{5-d}{4})$ is obtained by further estimating the kernel of \eqref{ineq:Strichartz refined general J} as
\[
\phi_s(|\eta_1|)\phi_s(|\eta_2|)-s^2
\leq
\phi_s(|\eta_1|)\phi_s(|\eta_2|).
\]
Again, we will see the sharpness of constants below. Of course, by a similar argument to the above, one can easily obtain the estimate \eqref{ineq:corollaries} with 
\begin{equation}\label{const:non-wave}
C=\frac{2^{-d+1}\pi^{\frac{-d+2}{2}}}{s\Gamma(\frac{d+2}{2})}
\end{equation}
when $(\alpha,\beta)=(1,\frac{4-d}{4})$ from \eqref{ineq:Strichartz refined non-wave} in the non-wave regime, and it is natural to hope that the constant is still optimal. We do not, however, know whether or not  the constant \eqref{const:non-wave} is optimal, which will become clear from the following argument on the sharpness of constants.\\


\subsection{Wave regime}
Recall $\beta_d=\max\{\frac{1-d}{4},\frac{2-d}{2}\}$. We shall consider \eqref{ineq:corollaries} with $(\alpha,\beta)=(\frac{d-1}{4}+\beta,\beta)$ for $\beta\in(\beta_d,\infty)$. 
For $f_a$ given by \eqref{extremisers}, one can observe that 
\[
\||\square|^\beta|e^{it\phi_s(D)}f_a|^2\|_{L^2(\mathbb R^{d+1})}^2
=
2^{\frac{-3d+7}{2}-2\beta}|\mathbb S^{d-1}|\KG(\beta,d)(2a)^{-2d+5-4\beta}
\L_a(\beta)
\]
and
\begin{equation}\label{Strichartz RHS D}
\|\phi_s(D)^{\frac{d-1}{4}+\beta}f_a\|_{L^2(\mathbb R^d)}^4
=
(2\pi)^{-2d}|\mathbb S^{d-1}|^2(2a)^{-3d+5-4\beta}\R_a(\beta,\tfrac{d-3}{2}+2\beta),
\end{equation}
and so it is enough to show
\begin{equation}\label{eq:LHS/RHStoF}
\lim_{a\to0}
\frac{\||\square|^\beta|e^{it\phi_s(D)}f_a|^2\|_{L^2(\mathbb R^{d+1})}^2}{\|\phi_s(D)^{\frac{d-1}{4}+\beta}f_a\|_{L^2(\mathbb R^d)}^4}
=
\lim_{a\to0}(2a)^d C(\beta,d)\frac{\L_a(\beta)}{\R_a(\beta,\tfrac{d-3}{2}+2\beta)}
=
\F(\beta,d),
\end{equation}
where
\[
C(\beta,d)
=
2^{-2(d-2)}\pi^{\frac{-d+1}{2}}\frac{\Gamma(\frac{d-1}{2}+2\beta)}{\Gamma(d-1+2\beta)}.
\]
Since we have, by appropriate change of variables,
\[
\L_a(\beta)
=
e^{-4as}
(2a)^{-d}
\int_0^\infty
 e^{-\rho}\rho^{\frac32d-2+2\beta}(\rho+8as)^{\frac32d-2+2\beta}
\int_0^1
\frac{(1-\nu^2)^{d-2+2\beta}\nu^{d-1}}{(\rho+4as)^2(1-\nu^2)+(4as)^2\nu^2}
\,\d\nu\d\rho
\]
and
\[
\R_a(\beta,\tfrac{d-3}{2}+2\beta)
=
e^{-4as}
\left(
\int_0^\infty 
e^{-\rho}
(\rho+2as)^{\frac{d-3}{2}+2\beta}
\rho^{\frac{d-2}{2}}(\rho+4as)^{\frac{d-2}{2}} 
\,\d\rho
\right)^2,
\]
one may deduce
\[
\lim_{a\to0}
(2a)^d\frac{\L_a(\beta)}{\R_a(\beta,\tfrac{d-3}{2}+2\beta)}
=
\frac{\Gamma(3d-5+4\beta)B(d-2+2\beta,\frac d2))}{2\Gamma(\frac{3d-5}{2}+2\beta)^2},
\]
which leads to \eqref{eq:LHS/RHStoF}.

In order to show the constant $\F(\frac{5-d}{4},d)^\frac12$ is sharp in \eqref{ineq:Strichartz refined general J}, we apply a similar calculation. In particular, one may note that the right-hand side of \eqref{ineq:Strichartz refined general J} can be written as
\begin{equation}
(2\pi)^{-2d}|\mathbb S^{d-1}|^2(2a)^{-2d}[\R_a(\tfrac{5-d}{4},1)-(2as)^2\R_a(\tfrac{5-d}{4},0)],
\end{equation}
instead of \eqref{Strichartz RHS D}. One can also see the second term is negligible in the sense of the optimal constant since it vanishes while $a$ tends to $0$. 
\subsection{Non-wave regime}\label{s:sharpness non-wave}
Let $f_a$ satisfy \eqref{extremisers}. Note that in the non-wave regime the right-hand side of \eqref{ineq:corollaries} for the pair $(\frac d4+\beta,\beta)$ is expressed as
\begin{equation}
\|\phi_s(D)^{\frac{d}{4}+\beta}f_a\|_{L^2(\mathbb R^d)}^4
=
(2\pi)^{-2d}|\mathbb S^{d-1}|^2(2a)^{-3d+4-4\beta}\R_a(\beta,\tfrac{d-2}{2}+2\beta).
\end{equation}
Then, as we have done above, reform $\L_a(\beta)$ and $\R_a(\beta,\tfrac{d-2}{2}+2\beta)$ as follows by some appropriate change of variables:
\[
\L_a(\beta)
=
e^{-4as}
(2a)^{\frac d2-4+2\beta}
\int_0^\infty
 e^{-\rho}\rho^{\frac32d-2+2\beta}(\frac{\rho}{2a}+4s)^{\frac32d-2+2\beta}
\int_0^1
\frac{(1-\nu^2)^{d-2+2\beta}\nu^{d-1}}{(\frac{\rho}{2a}+2s)^2(1-\nu^2)+(2s)^2\nu^2}
\,\d\nu\d\rho
\]
and
\[
\R_a(\beta,\tfrac{d-2}{2}+2\beta)
=
e^{-4as}
(2a)^{2d-4+4\beta}
\left(
\int_0^\infty 
e^{-\rho}
(\frac{\rho}{2a}+s)^{\frac{d-2}{2}+2\beta}
\rho^{\frac{d-2}{2}}(\frac{\rho}{2a}+2s)^{\frac{d-2}{2}} 
\,\d\rho
\right)^2.
\]
First, we shall consider \eqref{ineq:corollaries} with $(\alpha,\beta)=(0, \frac{2-d}{4})$. 
By a similar argument to the wave regime above, one can easily check that
\[
\lim_{a\to\infty}(2a)^{d+1}\frac{\L_a(\tfrac{2-d}{4})}{\R_a(\tfrac{2-d}{4},0)}=2^{d-3}s^{-1}
\]
holds, from which it follows that
\[
\lim_{a\to\infty}
\frac{\||\square|^{\frac{2-d}{4}}|e^{it\phi_s(D)}f_a|^2\|_{L^2(\mathbb R^{d+1})}^2}{\|\phi_s(D)^{\frac12}f_a\|_{L^2(\mathbb R^d)}^4}
=
\lim_{a\to\infty}(2a)^{d+1} C(\tfrac{2-d}{4},d)\frac{\L_a(\tfrac{2-d}{4})}{\R_a(\tfrac{2-d}{4},0)}
=
\frac{2^{-d+1}\pi^{\frac{-d+2}{2}}}{s\Gamma(\frac d2)}.
\]\\

We now turn to \eqref{ineq:Strichartz refined non-wave} ,where $(\alpha,\beta)=(1,\frac{4-d}{4})$, and the argument goes almost the same as above. In this case, we have 
$$C(\tfrac{4-d}{4},d)
=
\frac{2^{-d+2}\pi^{\frac{-d+2}{2}}}{\Gamma(\frac{d+2}{2})},$$
\[
\L_a(\tfrac{4-d}{4})
=
e^{-4a}(2a)^{-2}
\int_0^\infty e^{-\rho}\rho^d\left(\frac{\rho}{2a}+4s\right)^d\left(\int_0^\infty\frac{(1-\nu^2)^\frac s2\nu^{d-1}}{(\frac{\rho}{2a}+2s)^2(1-\nu^2)+(2s)^2\nu^2}\,\d\nu\right)\,\d\rho
\]
and 
\begin{align*}
&
((2a)^{d-2}e^{-4as})^{-1}
\left(
\R_a(\tfrac{4-d}{4},1)
-
(2as)^2
\R_a(\tfrac{4-d}{4},0)
\right)\\
&\quad=
\left(
\int_0^\infty e^{-\rho}(\rho+2as)\rho^{\frac{d-2}{2}}\left(\frac{\rho}{2a}+2s\right)^{\frac{d-2}{2}}\,\d\rho
\right)^2
-
\left(
(2as)
\int_0^\infty e^{-\rho}\rho^{\frac{d-2}{2}}\left(\frac{\rho}{2a}+2s\right)^{\frac{d-2}{2}}\,\d\rho
\right)^2\\
&\quad=
(2a)
\left(
\int_0^\infty e^{-\rho}\rho^{\frac{d-2}{2}}\left(\frac{\rho}{2a}+2s\right)^{\frac{d}{2}}\,\d\rho
\right)
\left(
\int_0^\infty e^{-\rho}\rho^{\frac{d}{2}}\left(\frac{\rho}{2a}+2s\right)^{\frac{d-2}{2}}\,\d\rho
\right).
\end{align*}
Hence, one can easily check that 
\begin{align}\label{eq:LHS/RHStoJ}
&\lim_{a\to\infty}
\frac{\||\square|^{\frac{4-d}{4}}|e^{it\phi_s(D)}f_a|^2\|_{L^2(\mathbb R^{d+1})}^2}{\|\phi_s(D)f_a\|_{L^2(\mathbb R^d)}^4-(2as)^2\|\phi_s(D)^\frac12f_a\|_{L^2(\mathbb R^d)}^4}\nonumber\\
&\quad=
\lim_{a\to\infty}(2a)^{d+1} C(\tfrac{4-d}{4},d)\frac{\L_a(\tfrac{4-d}{4})}{\R_a(\tfrac{4-d}{4},1)-(2as)^2\R_a(\tfrac{4-d}{4},0)}\nonumber\\
&\quad=
\frac{2^{-d+2}\pi^{\frac{-d+2}{2}}\Gamma(d+1)\left(\int_0^\infty(1-\nu^2)^\frac d2\nu^{d-1}\,\d\nu\right)}{s\Gamma(\frac{d+2}{2})\Gamma(\frac d2)\Gamma(\frac d2+1)}\\
&\quad=
\frac{2^{-d+1}\pi^{\frac{-d+2}{2}}}{s\Gamma(\frac {d+2}{2})}\nonumber
\end{align}
by noticing $\int_0^\infty(1-\nu^2)^\frac d2\nu^{d-1}\,\d\nu=\frac12B(\frac d2+1,\frac d2)$.


In contrast to the wave regime, the squared right-hand side of \eqref{ineq:Strichartz refined general J} without the constant can be expressed as 
\[
(2\pi)^{-2d}|\mathbb S^{d-1}|^2(2a)^{-2d}(\R_a(\tfrac{4-d}{4},1)-(2as)^2\R_a(\tfrac{4-d}{4},0)).
\]
Unlike the wave regime, however, $a$ is sent to $\infty$ (instead of $0$) to derive \eqref{eq:LHS/RHStoJ} and the second term of \eqref{ineq:Strichartz refined general J} does not vanish.  Hence, we cannot follow the argument for the wave regime and do not know whether the constant \eqref{const:non-wave} is still optimal for \eqref{ineq:corollaries} when $(\alpha,\beta)=(1,\frac{4-d}{4})$. 


\subsection{Non-existence of an extremiser}
Suppose there were non-trivial $f$ and $g$ that satisfy any of the statements in Corollary \ref{cor:non-wave} with equality.
From our proof via Theorem \ref{thm:main}, it would be required that
\begin{align*}
&\int_{\mathbb{R}^{2d}}
|\widehat{f}(\eta_1)|^2|\widehat{g}(\eta_2)|^2\phi_s(|\eta_1|)\phi_s(|\eta_2|)
\K_\frac12^{\frac{d-2}{2}+2\beta}(\eta_1,\eta_2)
\,\d \eta_1\d \eta_2\\
&\quad=
2^{-\frac12}s^{-1}
\int_{\mathbb{R}^{2d}}
|\widehat{f}(\eta_1)|^2|\widehat{g}(\eta_2)|^2\phi_s(|\eta_1|)\phi_s(|\eta_2|)
\K_0^{\frac{d-2}{2}+2\beta}(\eta_1,\eta_2)
\,\d \eta_1\d \eta_2
\end{align*}
holds. Then, 
\[
\int_{\mathbb{R}^{2d}}
|\widehat{f}(\eta_1)|^2|\widehat{g}(\eta_2)|^2\phi_s(|\eta_1|)\phi_s(|\eta_2|)
\left(
\K_\frac12^{\frac{d-2}{2}+2\beta}(\eta_1,\eta_2)
-
2^{-\frac12}s^{-1}
\K_0^{\frac{d-2}{2}+2\beta}(\eta_1,\eta_2)
\right)
\,\d \eta_1\d \eta_2
=0
\]
would hold. Since $f$, $g$ are assumed to be non-trivial $\widehat{f}$, $\widehat{g}\not=0$ on some set $\setF\times \setG\subseteq\mathbb R^{2d}$ with $|\setF|$, $|\setG|>0$, it would be deduced that
\begin{equation}\label{eq:kernerls}
\K_\frac12^{\frac{d-2}{2}+2\beta}(\eta_1,\eta_2)
-
2^{-\frac12}s^{-1}
\K_0^{\frac{d-2}{2}+2\beta}(\eta_1,\eta_2)
=
0
\end{equation}
on $(\setF\times \setG)\setminus \setN$ where $\setN\subseteq\mathbb R^{2d}$ is a null set. However, \eqref{eq:kernerls} would hold only on the diagonal line $\{(\eta_1,\eta_2):\eta_1=\eta_2\}$ (the equality condition of \eqref{ineq:kernelest non-wave}), which is a null set and so is $\{(\eta_1,\eta_2):\eta_1=\eta_2\}\cap(\setF\times \setG)$. This is a contradiction. \\

For Corollary \ref{cor:biest radial}, Corollary \ref{cor:wave}, similar arguments above can be carried. In particular, for equality in the wave regime, the formula \eqref{eq:kernerls} might be replaced by
\[
\K_\frac12^{\frac{d-2}{2}+2\beta}(\eta_1,\eta_2)
-
\K_0^{\frac{d-3}{2}+2\beta}(\eta_1,\eta_2)
=
0
\]
on $(\setF\times \setG)\setminus \setN$, which would only occur when $s=0$ (the equality condition of \eqref{ineq:keyest wave}).

\section{Analogous results for $(++)$ case}\label{sec:analogous results ++}
Here we note the analogous versions of our main results in the $(++)$ case. Here we observe an interesting phenomenon; the null-form $|\square-(2s)^2|$ instead of $|\square|$ somehow fits into the estimate in this framework, which does not occur in similar discussions for the wave propagators in \cite{BJO16}.
\begin{theorem}\label{thm:main ++}
For $d\ge2$ and $\beta>\frac{3-2d}{4}$, we have the sharp estimate
\begin{align}\label{ineq:main ++}
&\||\square-(2s)^2|^\beta (e^{it\phi_s(D)}fe^{it\phi_s(D)}g)\|_{L^2(\mathbb{R}^{d+1})}^2\\
&\quad\le
\KGpp(\beta,d)
\int_{\mathbb{R}^{2d}}
|\widehat{f}(\eta_1)|^2|\widehat{g}(\eta_2)|^2\phi_s(|\eta_1|)\phi_s(|\eta_2|)
\K_\frac12^{\frac{d-2}{2}+2\beta}(\eta_1,\eta_2)
\,\d\eta_1\d\eta_2,\nonumber
\end{align}
where
\[
\KGpp (\beta,d)
:=
2^{\frac{-5d+1}{2}+2\beta}\pi^{\frac{-5d+1}{2}}\frac{\Gamma(\tfrac{d-1}{2})}{\Gamma(d-1)}.
\]
Moreover, the constant $\KGpp(\beta,d)$ is sharp.
\end{theorem}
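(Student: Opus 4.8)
The plan is to run the proof of Theorem \ref{thm:main} with the complex conjugate removed, observing that this makes the argument strictly easier. Writing $u=e^{it\phi_s(D)}f$ and $v=e^{it\phi_s(D)}g$ and using $\widetilde u(\tau,\xi)=2\pi\delta(\tau-\phi_s(|\xi|))\widehat f(\xi)$, $\widetilde v(\tau,\xi)=2\pi\delta(\tau-\phi_s(|\xi|))\widehat g(\xi)$, the convolution $\widetilde u*\widetilde v$ is now supported on $\{\tau=\phi_s(|\eta_1|)+\phi_s(|\eta_2|),\ \xi=\eta_1+\eta_2\}$. This is already the symmetric constraint used in the Lorentz-boost argument, so -- in contrast with the $(+-)$ case -- no change of variables of the type $(\eta_2,\eta_4)\mapsto(-\eta_4,-\eta_2)$ is needed. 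Expanding $\||\square-(2s)^2|^\beta(uv)\|_{L^2}^2$ via Plancherel one arrives, exactly as in the proof of Theorem \ref{thm:main}, at a quadruple integral over $\eta_1,\eta_2,\eta_3,\eta_4$ carrying the delta $\delta\binom{\phi_s(|\eta_1|)+\phi_s(|\eta_2|)-\phi_s(|\eta_3|)-\phi_s(|\eta_4|)}{\eta_1+\eta_2-\eta_3-\eta_4}$ and the symbol $|\tau^2-|\xi|^2-(2s)^2|^{2\beta}$ of the multiplier.

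The key algebraic point is that on the support of this delta,
\[
\tau^2-|\xi|^2-(2s)^2=2\big(\phi_s(|\eta_1|)\phi_s(|\eta_2|)-\eta_1\cdot\eta_2-s^2\big)=2\big(\phi_s(|\eta_3|)\phi_s(|\eta_4|)-\eta_3\cdot\eta_4-s^2\big),
\]
which is nonnegative by \eqref{ineq:keyest non-wave}; hence the symbol equals $2^{2\beta}(\phi_s(|\eta_1|)\phi_s(|\eta_2|)-\eta_1\cdot\eta_2-s^2)^{2\beta}$ with no absolute value, which is precisely why $|\square-(2s)^2|$ is the natural null-form here. Applying the arithmetic--geometric mean inequality exactly as in \eqref{ineq:a/g mean}, and using the displayed identity to attach the factor $(\phi_s(|\eta_1|)\phi_s(|\eta_2|)-\eta_1\cdot\eta_2-s^2)^{2\beta}$ to whichever of the pairs $(\eta_1,\eta_2)$, $(\eta_3,\eta_4)$ is held fixed, each of the two resulting terms becomes $2^{2\beta}(\phi_s(|\eta_1|)\phi_s(|\eta_2|)-\eta_1\cdot\eta_2-s^2)^{2\beta}$ times the integral $J^0(\eta_1,\eta_2)$ of Lemma \ref{lem:reformation of J}. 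Since $0>\frac{1-d}{4}$ for every $d\ge2$, Lemma \ref{lem:reformation of J} applies and gives $J^0(\eta_1,\eta_2)=(2\pi)^{\frac{d-1}{2}}\frac{\Gamma((d-1)/2)}{\Gamma(d-1)}\K_\frac12^{\frac{d-2}{2}}(\eta_1,\eta_2)$; multiplying by the external factor reassembles $\K_\frac12^{\frac{d-2}{2}}\cdot(\phi_s(|\eta_1|)\phi_s(|\eta_2|)-\eta_1\cdot\eta_2-s^2)^{2\beta}=\K_\frac12^{\frac{d-2}{2}+2\beta}$, and collecting the powers of $2$ and $2\pi$ produces exactly $\KGpp(\beta,d)$. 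Crucially, no $2\beta$-power ever enters the angular integral $\int_{\mathbb S^{d-1}}(1+\theta\cdot\omega_*)^{2\beta}\,\d\sigma(\theta)$; it is replaced by the trivial value $|\mathbb S^{d-1}|$, which is why $\KGpp(\beta,d)$ is $\beta$-independent apart from the $2^{2\beta}$ and why the admissible range $\beta>\frac{3-2d}{4}$ is wider than the range $\beta>\frac{1-d}{4}$ of Theorem \ref{thm:main}.

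Optimality of $\KGpp(\beta,d)$ then follows exactly as for Theorem \ref{thm:main}: the only inequality used is \eqref{ineq:a/g mean}, which is an equality once $\phi_s(|\eta_1|)\phi_s(|\eta_2|)\widehat f(\eta_1)\widehat g(\eta_2)=\phi_s(|\eta_3|)\phi_s(|\eta_4|)\widehat f(\eta_3)\widehat g(\eta_4)$ a.e.\ on the support of the delta. Taking $f=g=f_a$ with $f_a$ as in \eqref{extremisers}, this quantity equals $e^{-a(\phi_s(|\eta_1|)+\phi_s(|\eta_2|))}$, which is constant on the support, so \eqref{ineq:main ++} holds with equality for such $f_a$ and the constant is sharp.

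The part requiring genuine care -- rather than a real obstacle -- is, first, pinning down the range $\beta>\frac{3-2d}{4}$: this is precisely the condition for $\K_\frac12^{\frac{d-2}{2}+2\beta}$ to be integrable against natural, in particular radially symmetric, test functions, the numerator base of $\K_\frac12^{\frac{d-2}{2}+2\beta}$ vanishing to second order along the diagonal $\eta_1=\eta_2$, so that for radial $f,g$ the $(1-t^2)^{\frac{d-3}{2}}\,\d t$ weight forces $\frac{d-2}{2}+2\beta+\frac{d-3}{2}>-1$; under this condition both sides of \eqref{ineq:main ++} are finite and $f_a$ is admissible. Second, one must carry the numerical constant through the Lorentz change of variables, the polar-coordinate reductions in Lemma \ref{lem:reformation of J} and the Fourier normalisations, all with the extra parameter $s$ in place. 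None of this goes beyond the techniques already used for Theorem \ref{thm:main}.
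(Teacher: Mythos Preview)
Your argument is correct and is essentially the same as the paper's, with only a cosmetic difference in packaging: the paper applies Cauchy--Schwarz pointwise to $|\widetilde{(uv)}(\tau,\xi)|^2$ and then invokes Jeavons' explicit formula $\sigma_s*\sigma_s(\tau,\xi)=\tfrac{|\mathbb S^{d-1}|}{2^{d-2}}\tfrac{(\tau^2-|\xi|^2-(2s)^2)^{(d-3)/2}}{(\tau^2-|\xi|^2)^{1/2}}$, whereas you expand the square into a four-variable integral, use the equivalent AM--GM step \eqref{ineq:a/g mean}, and evaluate the resulting $J^0$ via Lemma~\ref{lem:reformation of J} at $\beta=0$. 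Both routes rest on the same observation you highlight---that in the $(++)$ case the symbol $\tau^2-|\xi|^2-(2s)^2=2(\phi_s(|\eta_1|)\phi_s(|\eta_2|)-\eta_1\cdot\eta_2-s^2)$ depends only on $(\tau,\xi)$ and hence factors out of the inner integral---and your sharpness argument via $f_a$ and your identification of the range $\beta>\tfrac{3-2d}{4}$ match the paper's reasoning exactly.
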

One may note that by invoking the Legendre duplication formula; $\Gamma(z)\Gamma(z+\tfrac12)=2^{1-2z}\pi^\frac12\Gamma(2z)$,
\[
\KGpp(\beta,d)=\frac{2^{\frac{-7d+5}{2}+2\beta}\pi^{\frac{-5d+2}{2}}}{\Gamma(\tfrac d2)}
\]
holds, which is the same constant introduced in \cite{BJO16}.

An extra symmetry in the $(++)$ case allows \eqref{ineq:main ++} a wider range of $\beta$ than $\beta>\frac{1-d}{4}$ for \eqref{ineq:main}. Indeed, the condition $\beta>\frac{1-d}{4}$ is no longer imposed because of the form of the sharp constant $\KG_{(++)}(\beta,d)$, and the alternative lower bound of $\beta$ emerges from the kernel $\K_\frac12^{\frac{d-2}{2}+2\beta}$. To see this, let us consider an extremiser $f=g=f_1$ in \eqref{extremisers} and after applying polar coordinates to the right-hand side of \eqref{ineq:main ++} (without the constant) we obtain
\begin{align}\label{f:RHS}
&\iint
e^{-(\phi_s(r_1)+\phi_s(r_2))}(\phi_s(r_1)\phi_s(r_2))^{-1}(r_1r_2)^{d-1}\\
&\qquad\times
\int_{-1}^1
\frac{(\phi_s(r_1)\phi_s(r_2)-s^2-r_1r_2\lambda)^{\frac{d-2}{2}+2\beta}}{(\phi_s(r_1)\phi_s(r_2)+s^2-r_1r_2\lambda)^\frac12}(1-\lambda^2)^\frac{d-3}{2}\,\d\lambda\d r_1\d r_2.\nonumber
\end{align}
By \eqref{ineq:keyest non-wave}, one may observe that \eqref{f:RHS} is bounded (up to some constant) by
\[
s^{-3}
\iint
e^{-(\phi_s(r_1)+\phi_s(r_2))}(\phi_s(r_1)\phi_s(r_2))^{\frac{3d-4}{2}+2\beta}
\int_{-1}^1
\left(
1-\frac{r_1r_2}{\phi_s(r_1)\phi_s(r_2)-s^2}\lambda
\right)^{\frac{d-2}{2}+2\beta}
(1-\lambda^2)^\frac{d-3}{2}\,\d\lambda\d r_1\d r_2
\]
and finite whenever $\beta>\frac{3-2d}{4}$ by applying Lemma \ref{lem:monotonicity}.\\

In order to state the various results which follow from Theorem \ref{thm:main ++}, here we introduce the following constant for the wave regime
\[
\Fpp(\beta,d)
=
2^{-1+4\beta}\pi^{\frac{-d+1}{2}}\frac{\Gamma(d-2+2\beta)}{\Gamma(\frac{3d-5}{2}+2\beta)}.
\]

\begin{corollary}\label{cor:biest radial ++}
Let $d\ge2$, $\beta\geq\frac{2-d}{4}$. Then, there exists a constant $C>0$ such that
\begin{align}\label{ineq:biest D_-D_+ ++}
\||\square-(2s)^2|^{\beta}(e^{it\phi_s(D)}f e^{it\phi_s(D)}g)\|_{L^2(\mathbb{R}^{d+1})}
\le
C
\|\phi_s(D)^{\frac{d-1}{4}+\beta}f\|_{L^2(\mathbb{R}^d)}
\|\phi_s(D)^{\frac {d-1}{4}+\beta}g\|_{L^2(\mathbb{R}^d)}
\end{align}
holds whenever $f$ and $g$ are radially symmetric.
Moreover, for $\beta\in[\frac{2-d}{4},\tfrac{3-d}{4}]\cup[\tfrac{5-d}{4},\infty)$, the optimal constant in \eqref{ineq:biest D_-D_+ ++} for radially symmetric $f$ and $g$ is $\Fpp(\beta,d)^\frac12$, but there does not exist a non-trivial pair of functions $(f,g)$ that attains equality.
\end{corollary}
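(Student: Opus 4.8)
The plan is to transcribe the proof of Corollary~\ref{cor:biest radial}, replacing Theorem~\ref{thm:main} by Theorem~\ref{thm:main ++} throughout. The crucial observation is that the bilinear kernel on the right-hand side of \eqref{ineq:main ++} is \emph{exactly} the kernel $\K_{1/2}^{\frac{d-2}{2}+2\beta}$ appearing in \eqref{ineq:main}, so all of the kernel analysis carries over without change and only the constant prefactor, which is now $\KGpp(\beta,d)$ in place of $\KG(\beta,d)$, differs.

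For the inequality: assuming $f$ and $g$ radially symmetric, apply Theorem~\ref{thm:main ++} and switch to polar coordinates exactly as in the passage leading to \eqref{i:radial theorem}; this reduces \eqref{ineq:biest D_-D_+ ++} to a uniform-in-$(r_1,r_2)$ bound for the same angular kernel $\Theta_{1/2}^{\frac{d-2}{2}+2\beta}(r_1,r_2)$ considered there. Since $\frac{2-d}{4} > \frac{3-2d}{4}$ for $d \ge 2$, Theorem~\ref{thm:main ++} is available for all $\beta \ge \frac{2-d}{4}$, and Lemma~\ref{lem:monotonicity} then supplies a finite bound in each subrange: for $\beta \in [\frac{5-d}{4},\infty)$ use \eqref{ineq:keyest wave} to pass to $\Theta_0^{\frac{d-3}{2}+2\beta}$ and apply Lemma~\ref{lem:monotonicity} with exponent $\frac{d-3}{2}+2\beta \ge 1$; for $\beta \in (\frac{3-d}{4},\frac{5-d}{4})$ use the same reduction with exponent in $(0,1)$; and for $\beta \in [\frac{2-d}{4},\frac{3-d}{4}]$ run the monotonicity-plus-arithmetic-geometric-mean estimate of the auxiliary integral $\Xi(\nu,\upsilon)$ verbatim as in the proof of Corollary~\ref{cor:biest radial}. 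Keeping track of constants, in the range $\beta\in[\frac{2-d}{4},\frac{3-d}{4}]\cup[\frac{5-d}{4},\infty)$ the resulting bound equals $\frac{\KGpp(\beta,d)}{\KG(\beta,d)}$ times the one obtained there, and a short computation using the Legendre duplication formula applied to $\Gamma(\tfrac d2)\Gamma(\tfrac{d-1}{2})$ shows $\frac{\KGpp(\beta,d)}{\KG(\beta,d)}\F(\beta,d)=\Fpp(\beta,d)$, giving \eqref{ineq:biest D_-D_+ ++} with $C=\Fpp(\beta,d)^{1/2}$ there, and with a finite explicit $C$ for all $\beta\ge\frac{2-d}{4}$.

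For optimality I would test with $f=g=f_a$ from \eqref{extremisers}. Because the proof of Theorem~\ref{thm:main ++}, like that of Theorem~\ref{thm:main}, uses only the arithmetic-geometric mean inequality \eqref{ineq:a/g mean}, whose equality case is the same symmetric condition satisfied by $f=g=f_a$, equality holds in \eqref{ineq:main ++} for these functions; hence the square of the left-hand side of \eqref{ineq:biest D_-D_+ ++} for $f=g=f_a$ equals $\KGpp(\beta,d)\int_{\mathbb{R}^{2d}}|\widehat{f_a}(\eta_1)|^2|\widehat{f_a}(\eta_2)|^2\phi_s(|\eta_1|)\phi_s(|\eta_2|)\K_{1/2}^{\frac{d-2}{2}+2\beta}$, which is $\frac{\KGpp(\beta,d)}{\KG(\beta,d)}\||\square|^\beta|e^{it\phi_s(D)}f_a|^2\|_{L^2(\mathbb{R}^{d+1})}^2$. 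Dividing by $\|\phi_s(D)^{\frac{d-1}{4}+\beta}f_a\|_{L^2(\mathbb{R}^d)}^4$, which is unchanged, and letting $a\to0$, the limit \eqref{eq:LHS/RHStoF} yields $\frac{\KGpp(\beta,d)}{\KG(\beta,d)}\F(\beta,d)=\Fpp(\beta,d)$, so the constant $\Fpp(\beta,d)^{1/2}$ is sharp for $\beta\in[\frac{2-d}{4},\frac{3-d}{4}]\cup[\frac{5-d}{4},\infty)$. Non-existence of an extremiser follows exactly as in Section~\ref{sec:sharpness of constants}: equality in \eqref{ineq:biest D_-D_+ ++} would force equality in \eqref{ineq:keyest wave} on a set of positive measure, which is impossible when $s>0$.

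The routine part is the Gamma-function bookkeeping. The step needing genuine care is checking that the proof of Theorem~\ref{thm:main ++} does have the same arithmetic-geometric-mean structure, so that $f=g=f_a$ is an equality case of \eqref{ineq:main ++}; one should also note, in the spirit of the discussion around Proposition~\ref{prop:a counter example}, that this route through Theorem~\ref{thm:main ++} inevitably loses the gap $(\frac{3-d}{4},\frac{5-d}{4})$, so no improvement of the constant there can be expected from this argument.
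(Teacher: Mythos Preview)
Your plan is correct and matches the paper's (implicit) approach: the paper does not give a separate proof of Corollary~\ref{cor:biest radial ++} but simply records it as the $(++)$ analogue, so transcribing the proof of Corollary~\ref{cor:biest radial} with Theorem~\ref{thm:main ++} in place of Theorem~\ref{thm:main} is exactly what is intended, and your Gamma-function bookkeeping via Legendre duplication is right.

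One point to correct: the proof of Theorem~\ref{thm:main ++} does \emph{not} use the arithmetic--geometric mean \eqref{ineq:a/g mean}; it uses the Cauchy--Schwarz inequality (see the computation following Corollary~\ref{cor:wave ++} in Section~\ref{sec:analogous results ++}). The equality case there is that $\widehat f(\eta_1)\widehat g(\eta_2)\phi_s(|\eta_1|)\phi_s(|\eta_2|)$ be constant on each level set $\{\phi_s(|\eta_1|)+\phi_s(|\eta_2|)=\tau,\ \eta_1+\eta_2=\xi\}$, and $f=g=f_a$ satisfies this (the product equals $e^{-a\tau}$), so your conclusion that $f_a$ extremises \eqref{ineq:main ++} survives and your optimality shortcut---expressing the $(++)$ left-hand side for $f_a$ as $\tfrac{\KGpp(\beta,d)}{\KG(\beta,d)}$ times the already-computed $(+-)$ quantity and then invoking \eqref{eq:LHS/RHStoF}---is valid. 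Just amend the justification accordingly.
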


We remark that, following the argument in Section \ref{sec:arg. on gap}, once we apply Theorem \ref{thm:main ++} as a first step, it is not possible to obtain the constant $\F_{(++)}(\beta,d)^\frac12$ for $\beta\in(\frac{3-d}{4},\frac{5-d}{4})$.

\begin{corollary}\label{cor:wave ++} 
Let $d\ge2$. 
\begin{enumerate}[(i)]
\item
The estimate
\begin{equation}\label{ineq:corollaries++}
\||\square-(2s)^2|^\beta (e^{it\phi_s(D)}f)^2\|_{L^2(\mathbb{R}^{d+1})}
\le
C
\|\phi_s(D)^\alpha f\|_{L^2(\mathbb R^d)}^2
\end{equation}
holds with the optimal constant $C=\Fpp(\beta,d)^\frac12$
for $(\alpha,\beta)=(\frac12,\frac{3-d}{4})$ and $(\alpha,\beta)=(1,\frac{5-d}{4})$, but there are no extremisers. Furthermore, when $(\alpha,\beta)=(1,\frac{5-d}{4})$, we have the refined Strichartz estimate
\begin{align*}
\||\square-(2s)^2|^\frac{5-d}{4} (e^{it\phi_s(D)}f)^2\|_{L^2(\mathbb{R}^{d+1})}
\le
\Fpp(\tfrac{5-d}{4},d)^\frac12
\left(
\|\phi_s(D)f\|_{L^2(\mathbb R^d)}^4
-s^2
\|\phi_s(D)^\frac12f\|_{L^2(\mathbb R^d)}^4
\right)^\frac12,
\end{align*}
where the constant is optimal and there are no extremisers. 

\item
The estimate \eqref{ineq:corollaries++} holds with the optimal constant 
$$
C
=
\left(
2^{1-d}\pi^{\frac{-d+1}{2}}\frac{\Gamma(\frac{d-1}{2})}{\Gamma(\frac d2)}s^{-1}
\right)^\frac12
$$
for $(\alpha,\beta)=(\frac12,\frac{2-d}{4})$, but there are no extremisers. Furthermore, when $(\alpha,\beta)=(1,\frac{4-d}{4})$, we have the refined Strichartz estimate
\begin{align*}
&\||\square-(2s)^2|^\frac{4-d}{4} (e^{it\phi_s(D)}f)^2\|_{L^2(\mathbb{R}^{d+1})}\\
&\quad\le
\left(
2^{2-d}\pi^{\frac{-d+1}{2}}\frac{\Gamma(\frac{d-1}{2})}{\Gamma(\frac{d+2}{2})}s^{-1}
\right)^\frac12
\left(
\|\phi_s(D)f\|_{L^2(\mathbb R^d)}^4
-s^2
\|\phi_s(D)^\frac12f\|_{L^2(\mathbb R^d)}^4
\right)^\frac12,
\end{align*} 
where the constant is optimal and there are no extremisers.
\end{enumerate}
\end{corollary}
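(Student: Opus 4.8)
The plan is to repeat, almost verbatim, the three parts of Section~\ref{sec:sharpness of constants} (the upper bound deduced from the main bilinear estimate, optimality via the family $f_a$, and non-existence of extremisers), now with Theorem~\ref{thm:main ++} in place of Theorem~\ref{thm:main} and with $\KGpp,\Fpp$ in place of $\KG,\F$. What makes this transfer possible is that the right-hand side of \eqref{ineq:main ++} carries exactly the same kernel $\K_\frac12^{\frac{d-2}{2}+2\beta}$ as \eqref{ineq:main}, so the kernel estimates \eqref{ineq:keyest wave}, \eqref{ineq:kernelest wave}, \eqref{ineq:keyest non-wave} and \eqref{ineq:kernelest non-wave}, together with the fact that $\int\eta\,|\widehat{f}(\eta)|^2\phi_s(|\eta|)\,\d\eta$ contributes a nonnegative term to the relevant $\int f(x)f(y)x\cdot y\,\d x\,\d y$-type quantities, apply without modification.

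For the upper bounds I would argue exactly as in Section~\ref{sec:sharpness of constants}. When $(\alpha,\beta)=(\tfrac12,\tfrac{3-d}{4})$ one has $\tfrac{d-2}{2}+2\beta=\tfrac12$, so $\K_\frac12^{1/2}\le1$ by \eqref{ineq:keyest wave}, and \eqref{ineq:main ++} with $g=f$ collapses to \eqref{ineq:corollaries++} with $C^2=(2\pi)^{2d}\KGpp(\tfrac{3-d}{4},d)$, which the Legendre duplication formula quoted just after Theorem~\ref{thm:main ++} rewrites as $\Fpp(\tfrac{3-d}{4},d)$. When $(\alpha,\beta)=(1,\tfrac{5-d}{4})$ one has $\tfrac{d-3}{2}+2\beta=1$, so \eqref{ineq:kernelest wave} gives $\K_\frac12^{3/2}\le\K_0^1=\phi_s(|\eta_1|)\phi_s(|\eta_2|)-\eta_1\cdot\eta_2-s^2$; inserting this into \eqref{ineq:main ++} with $g=f$ and discarding the manifestly nonnegative contribution of the $\eta_1\cdot\eta_2$ term yields the refined estimate, and discarding also the $-s^2$ term yields \eqref{ineq:corollaries++}. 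Part~(ii) is the same computation with \eqref{ineq:kernelest non-wave} replacing \eqref{ineq:kernelest wave}, which introduces the extra factor $2^{-1/2}s^{-1}$ in the constant.

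For optimality I would test with $f=g=f_a$, the functions in \eqref{extremisers}. The proof of Theorem~\ref{thm:main ++} passes through the same arithmetic--geometric mean inequality as that of Theorem~\ref{thm:main}, and $f_a$ saturates it, so \eqref{ineq:main ++} is an equality for $f_a$; hence the quotient $\||\square-(2s)^2|^\beta(e^{it\phi_s(D)}f_a)^2\|_{L^2}^2/\|\phi_s(D)^\alpha f_a\|_{L^2}^4$ equals $\KGpp(\beta,d)/\KG(\beta,d)$ times the corresponding $(+-)$ quotient already evaluated in Section~\ref{sec:sharpness of constants} through the functions $\L_a$ and $\R_a$. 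Letting $a\to0$ in the wave regime and $a\to\infty$ in the non-wave regime, and simplifying $\KGpp(\beta,d)/\KG(\beta,d)=\Gamma(\tfrac{d-1}{2})\Gamma(d-1+2\beta)\big/\big(\Gamma(d-1)\Gamma(\tfrac{d-1}{2}+2\beta)\big)$ with the duplication formula, should reproduce $\Fpp(\beta,d)$ in part~(i) and the displayed constants in part~(ii); for the two refined estimates one repeats the observation from Section~\ref{sec:sharpness of constants} that, in the wave regime, the subtracted quantity $s^2\|\phi_s(D)^{1/2}f_a\|^4$ is of strictly lower order as $a\to0$ and so does not affect the value of the optimal constant.

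Non-existence of extremisers would follow exactly as in the last subsection of Section~\ref{sec:sharpness of constants}: if equality held for some non-trivial pair, then, reading the proof backwards, the relevant kernel estimate (\eqref{ineq:keyest wave} for part~(i), \eqref{ineq:keyest non-wave} for part~(ii)) would have to be an equality on a positive-measure set $\setF\times\setG$, but \eqref{ineq:keyest wave} is an equality only when $s=0$ and \eqref{ineq:keyest non-wave} only on the null set $\{\eta_1=\eta_2\}$ --- a contradiction; at $(\alpha,\beta)=(1,\tfrac{5-d}{4})$ and $(1,\tfrac{4-d}{4})$ one additionally discards the nonnegative $\big|\int\eta\,|\widehat{f}(\eta)|^2\phi_s(|\eta|)\,\d\eta\big|^2$, but this does not rescue equality since the kernel obstruction already bites. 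I do not expect a genuine obstacle: as the paper stresses, the $(++)$ case is easier than the $(+-)$ case and the work is essentially bookkeeping; the one step needing real care is checking that $f_a$ is genuinely an extremiser for Theorem~\ref{thm:main ++} and that the factor $\KGpp(\beta,d)/\KG(\beta,d)$, when multiplied against the $\L_a,\R_a$ limits of Section~\ref{sec:sharpness of constants}, collapses --- after one application of $\Gamma(z)\Gamma(z+\tfrac12)=2^{1-2z}\pi^{1/2}\Gamma(2z)$ --- to exactly $\Fpp(\beta,d)$ and to the non-wave constants of part~(ii).
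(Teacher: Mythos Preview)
Your proposal is essentially correct and mirrors the paper's intended argument: the paper states Corollary~\ref{cor:wave ++} without a dedicated proof, implicitly relying on the same three-step structure (kernel estimates for the upper bound, the family $f_a$ for sharpness, and the null-set argument for non-existence of extremisers) that you spell out.

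One small correction: the paper's proof of Theorem~\ref{thm:main ++} does \emph{not} pass through the arithmetic--geometric mean; it uses the Cauchy--Schwarz inequality applied pointwise in $(\tau,\xi)$ to $\widetilde{uv}(\tau,\xi)$ (following Jeavons). Fortunately the equality condition is the same --- that $\widehat f(\eta_1)\widehat g(\eta_2)\phi_s(|\eta_1|)\phi_s(|\eta_2|)$ be constant on each surface $\{\phi_s(|\eta_1|)+\phi_s(|\eta_2|)=\tau,\ \eta_1+\eta_2=\xi\}$ --- so $f_a$ is indeed an extremiser for \eqref{ineq:main ++} and your ratio shortcut (replacing the $(+-)$ left-hand side by $\KGpp/\KG$ times it) is valid. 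This shortcut is a pleasant economy the paper does not make explicit; the paper would presumably just rerun the $\L_a,\R_a$ computations of Section~\ref{sec:sharpness of constants} with $\KGpp$ in place of $\KG$.

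One point to tighten: for the refined estimate in part~(ii) the relevant limit is $a\to\infty$, and (as the paper stresses at the end of Section~\ref{s:sharpness non-wave}) the subtracted term $s^2\|\phi_s(D)^{1/2}f_a\|^4$ is \emph{not} of lower order in that regime. Your sentence ``the subtracted quantity \ldots is of strictly lower order as $a\to0$'' covers only the wave-regime refinement. The ratio shortcut still rescues you, since the full refined right-hand side is identical in the $(+-)$ and $(++)$ problems and only the numerator changes by the factor $\KGpp/\KG$; but you should say this rather than invoke a lower-order argument that does not apply.
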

Theorem \ref{thm:main ++} follows from adapting the calculation by Jeavons \cite{Jv14} without any major difficulty. Indeed, he has computed by the Cauchy--Schwarz inequality that 
\[
|\widetilde{(uv)}(\tau,\xi)|^2
\leq
\frac{J^0(\tau,\xi)}{(2\pi)^{2d-2}}\int_{(\mathbb R^d)^2}|F(\eta_1,\eta_2)|^2\delta{\tau-\phi_s(|\eta_1|)-\phi_s(|\eta_2|)\choose\xi-\eta_1-\eta_2}\,\d\eta_1\d\eta_2.
\]
Here, $u(t,x)=e^{it\phi_s(\sqrt{-\Delta})}f(x)$, $v(t,x)=e^{it\phi_s(\sqrt{-\Delta})}g(x)$, $$F(\eta_1,\eta_2)=\widehat f(\eta_1)\widehat g(\eta_2)\phi_s(|\eta_1|)^\frac12\phi_s(|\eta_2|)^\frac12,$$
and $J^\beta$ is given by \eqref{eq:reformation of J}.
In terms of the Lorentz invariant measure $\d\sigma_s(t,x)=\frac{\delta(t-\phi_s(|x|))}{\phi_s(|x|)}\,\d x\d t$, $J^0(\tau,\xi)$ is also written as
\[
J^0(\tau,\xi)
=
\sigma_s*\sigma_s(\tau,\xi).
\]
Invoking Lemma 1 in \cite{Jv14}, we have
\[
J^0(\tau,\xi)
=
\frac{|\mathbb S^{d-1}|}{2^{d-2}}\frac{(\tau^2-|\xi|^2-(2s)^2)^{\frac{d-3}{2}}}{(\tau^2-|\xi|^2)^\frac12}
\]
so that
\begin{align*}
&\||\square-(2s)^2|^\beta (e^{it\phi_s(D)}fe^{it\phi_s(D)}g)\|_{L^2(\mathbb{R}^{d+1})}^2\\
&\quad=
(2\pi)^{-(d+1)}\int_{\mathbb R^{d+1}}|\tau^2-|\xi|^2-(2s)^2|^{2\beta}|\widetilde{(uv)}(\tau,\xi)|^2\,\d\xi\d\tau\\
&\quad\leq
\frac{2^{\frac{-7d+3}{2}+2\beta}\pi^{-2d+1}}{\Gamma(\tfrac d2)}|\mathbb S^{d-1}|
\int_{(\mathbb R^d)^2}|\widehat f(\eta_1)|^2|\widehat g(\eta_2)|^2\phi_s(|\eta_1|)\phi_s(|\eta_2|)\K_\frac12^{\frac{d-2}{2}+2\beta}(\eta_1,\eta_2)\,\d\eta_1\d\eta_2,
\end{align*}
which is what we desired.\qed


\end{document}